\numberwithin{equation}{section}
\newtheorem{ssmptn}{Assumption}
\DeclareMathOperator*{\dive}{div}
\DeclareMathOperator*{\esssup}{ess\,sup}
\newcommand{\al}{\alpha}
\newcommand{\bino}{\bigskip\noindent}
\newcommand{\lmi}{\mathop{\mbox{\rm liminf}}\limits}
\theoremstyle{plain}
\newtheorem{thrm}{Theorem}[section]
\newtheorem{lmm}[thrm]{Lemma}
\newtheorem{prpstn}[thrm]{Proposition}
\theoremstyle{definition}
\newtheorem{dfntn}[thrm]{Definition}
\newtheorem{rmrk}[thrm]{Remark}
\title{Stability in affine optimal control problems constrained by semilinear elliptic partial differential equations\thanks{This research was supported by 
		the Austrian Science Foundation (FWF) under grant I 4571-N.}}
\author{
	Alberto Dom\'inguez Corella\thanks{Institute of Statistics and Mathematical Methods in Economics,
		Vienna University of Technology, Austria, {\tt alberto.corella@tuwien.ac.at}} 
	\and  
	Nicolai Jork \thanks{Institute of Statistics and Mathematical Methods in Economics,
		Vienna University of Technology, Austria, {\tt nicolai.jork@tuwien.ac.at}} 
	\and  
	Vladimir Veliov\thanks{Institute of Statistics and Mathematical Methods in Economics,
		Vienna University of Technology, Austria, {\tt vladimir.veliov@tuwien.ac.at}}}
\date{}
\begin{document}
\maketitle

\begin{abstract} 
	This paper investigates stability properties of affine optimal control problems constrained 
	by semilinear elliptic partial differential equations. This is done by studying the so called metric subregularity 
	of the set-valued mapping associated with the system of first order necessary optimality conditions. 
	Preliminary results concerning the differentiability of the functions involved are established, 
	especially the so-called switching function. Using this ansatz, more general nonlinear perturbations are encompassed, 
	and under weaker assumptions than the ones previously considered in the literature on 
	control constrained elliptic problems. 
	Finally, the applicability of the results is illustrated with some error estimates for the Tikhonov regularization.
\end{abstract}
\section{Introduction}
We consider the following  optimal control problem
\begin{align}\label{cost}
\min_{u\in\mathcal U}\left\lbrace\int_{\Omega}\Big[w(x,y)+  s(x,y) u\Big]\,dx\right\rbrace ,
\end{align}
subject to 
\begin{align}\label{system}
\left\{ \begin{array}{cclcc}
-\dive\big(A(x)\nabla y\big)+d(x,y)&=&\beta(x)u& \text{in}& \Omega  \\
\\ A(x)\nabla y\cdot \nu+b(x)y&=&0 &\text{on}& \partial\Omega.
\end{array} \right.
\end{align}
The set $\Omega\subset\mathbb R^n$ is a bounded domain with Lipschitz boundary, where 
$n\in\left\lbrace 2,3\right\rbrace $. The unit outward normal vector field on the boundary $\partial\Omega$, which is single valued a.e. in $\partial\Omega$, is denoted by $\nu$.  The control set is given by
\begin{align*}
\mathcal U:=\left\lbrace u:\Omega\to\mathbb R\hspace*{0.2cm}\text{measurable}:  \hspace{0.20cm}b_1(x)\le u(x)\le b_2(x) \hspace{0.20cm}\text{for a.e. $x\in\Omega$}\right\rbrace, 
\end{align*}
where $b_{1}$ and $b_{2}$ are bounded measurable 
functions satisfying $b_1(x)\le b_2(x)$ for a.e. $x\in\Omega$. The functions $w: \Omega\times\mathbb R \to \mathbb R$, 
$s:\Omega\times\mathbb R\to\mathbb R$,  $d:\Omega\times\mathbb R\to\mathbb R$,  $\beta : \Omega \to \mathbb R$ and 
$b:\partial\Omega\to\mathbb R$
are real-valued and measurable, and $A:\Omega\to \mathbb R^{n\times n}$ is a measurable matrix-valued function.

There are many motivations for studying stability of solutions, in particular for error analysis of numerical methods, 
see e.g., \cite{Pornerelliptic2,Pornerelliptic1}.   
Most of the stability results for elliptic control problems are obtained under a second order growth condition 
(analogous to the classical Legendre-Clebsch condition). For literature concerning this type of problems,  the reader  is referred
to \cite{Grielliptic,Hinzeelliptic,Kiebelliptic,Malaelliptic,Morduelliptic,Quielliptic} and the references therein. 
In optimal control problems like (\ref{cost})--(\ref{system}), where the control appears linearly
(hence, called affine problems) this growth condition does not hold. The so-called 
bang-bang solutions are ubiquitous in this case, see \cite{Casas93,Casasbang,Casanum}. 
To give an account of the state of art in stability of bang-bang problems, we  mention the works
\cite{Sey2,SubregOsm,Mayersubreg,Prei,Sey1} on optimal control of ordinary differential equations. 
Associated results for optimization problems constrained by partial differential equations have been gaining 
relevance in recent years, see \cite{Casascone,CT,Casasbang,Casanum,Hinzestruct,Wachelliptic}. 
However,  its stability has been only investigated in a handful of papers, see e.g.,
\cite{Hinzestruct,MR3810878,Wachelliptic}. 
From these works, we mention here particularly \cite{Wachelliptic}, 
where the authors consider linear perturbations in the state and adjoint equations for a similar problem with 
Dirichlet boundary condition.  They use the so-called structural assumption (a growth assumption satisfied 
near the jumps of the control) on the adjoint variable. This assumption has been widely used in 
the literature on bang-bang control of ordinary differential equations in a somewhat different form, see, e.g., \cite{Sey2,SubregOsm,Prei,Sey1}.

The investigations of stability properties of optimization problems, in general, are usually based on 
the study of similar properties of the corresponding system of necessary optimality conditions. 
The first order necessary optimality conditions for problem (\ref{cost})--(\ref{system}) can be 
recast as a system of two elliptic equations (primal and adjoint) and one variational inequality
(representing the minimization condition of the associated Hamiltonian), forming together a {\em generalized
	equation}, that is, an inclusion involving a set-valued mapping called {\em optimality mapping}. 
The concept of {\em strong metric subregularity}, see \cite{Subreg,Dontchevbook},
of set-valued mappings has shown to be efficient in many applications especially ones related 
to error analysis, see \cite{Bonnans}.
This also applies to optimal control problems of ordinary differentials equations, see e.g., \cite{MpcDont,SubregOsm}.

In the present paper we investigate the strong metric subregularity property of the optimality mapping
associated with problem (\ref{cost})--(\ref{system}). We present sufficient conditions for strong subregularity 
of this mapping on weaker assumptions than the ones used in literature, see Section \ref{Section5} for precise details.
The structural assumption in \cite{Wachelliptic} is weakened and more general perturbations are considered. 
Namely, perturbations in the variational inequality, appearing as a part of the first 
order necessary optimality conditions, are considered; which are important in the numerical analysis of ODE and 
PDE constrained optimization problems. Moreover,  nonlinear perturbations are investigated, 
which provides a framework for applications, as illustrated with an estimate related to the Tikhonov regularization.
The concept of linearization is employed in a functional frame in order to deal with nonlinearities. 
The needed differentiability of the control-to-adjoint mapping and the switching function (see Section \ref{Section diff})
is proved, and the derivatives are used to obtain adequate estimates needed in the stability results. 
Finally, we consider nonlinear perturbations in a general framework. We propose the use of the compact-open 
topology to have a notion of “closeness to zero" of the perturbations. In our particular case this topology can me 
metrized, providing a more “quantitative" notion. Estimates in this metric are obtained in 
Section \ref{Section nonlin}.

\section{Preliminaries}
The  euclidean space $\mathbb R^s$ is considered with its usual norm, denoted by $|\cdot|$. 
As usual, for $p\in[1,\infty)$, we denote by $L^p(\Omega)$ the space of all measurable $p$-integrable functions 
$\psi:\Omega\to\mathbb R^s$ with the norm
\begin{align*}
|\psi|_{L^p(\Omega)}:=\Big(\sum_{i=1}^{s}\int_{\Omega}|\psi_i(x)|^p\,dx\Big)^{\frac{1}{p}}.
\end{align*}
The space $L^\infty(\Omega)$ consists of all measurable essentially bounded functions $\psi:\Omega\to\mathbb R^s$ with the norm
\begin{align*}
|\psi|_{L^\infty(\Omega)}:=\esssup_{x\in\Omega}|\psi(x)|.
\end{align*}
We denote by $C(\bar\Omega)$ the space of continuous functions on $\Omega$ that can be extended 
continuously to $\bar\Omega$ equipped with the $L^\infty$-norm.  We denote  by $H^1(\Omega)$ the space 
of functions $\psi\in L^2(\Omega)$  having all first order weak derivatives in $L^2(\Omega)$ endowed with its usual norm.
The space $H^1(\Omega)\cap C(\bar \Omega)$ is endowed with the norm
\begin{align*}
|\psi|_{H^1(\Omega)\cap C(\bar \Omega)}:= |\psi|_{H^1(\Omega)}+|\psi|_{C(\bar \Omega)}.
\end{align*}
A function $\psi:\Omega\times\mathbb R\to\mathbb R$ is said to be Carath\'eodory if $\psi(\cdot,y)$ is measurable 
for every $y\in\mathbb R$, and $\psi(x,\cdot)$ is continuous for a.e. $x\in\Omega$.
A function $\psi:\Omega\times\mathbb R\to\mathbb R$ is said to be  locally Lipschitz, uniformly in the first variable, if 
for each $M>0$ there exists $L>0$ such that 
\begin{align*}
|\psi(x,y_2)-\psi(x,y_1)|\le L|y_2-y_1|
\end{align*} 
for a.e. $x\in\Omega$ and all $y_1,y_2\in [-M,M]$.
In order to abbreviate notation, we define $f,g:\Omega\times\mathbb R\times\mathbb R\to\mathbb R$ by
\begin{align*}
f(x,y,u) := \beta(x) u-d(x,y) \quad \text{and}     \quad   g(x,y,u) := w(x,y) +  s(x,y) u.
\end{align*}
The following assumption is supposed to hold throughout the remainder of the paper. It ensures that 
the mathematical  objects related to problem (\ref{cost})--(\ref{system})  that we consider are well defined. 
Assumption \ref{A1} is quite standard in the literature, see  the book \cite{TroeltzschPde}.
\begin{ssmptn}\label{A1}
	The following statements are assumed to hold.
	\begin{itemize}
		\item[(i)] The set $\Omega\subset\mathbb R^n$ is a bounded Lipschitz domain. 
		The matrix $A(x)$ is symmetric for a.e. $x$ in $\Omega$, and there exists $\alpha>0$ such that 
		$\xi \cdot A(x)\xi\ge \alpha|\xi|^2$ for a.e. $x$ in $\Omega$ and all $\xi\in\mathbb R^n$.
		
		\item[(ii)] The functions $w,s$ and $d$ are Carath\'eodory, twice differentiable with respect to 
		the {second} variable, and their second derivatives are locally Lipschitz, uniformly in the first variable. 
		
		\item[(iii)] The functions $A,\beta,b,d(\cdot,0),d_{y}(\cdot,0),w_y(\cdot,0)$ and $s_y(\cdot,0)$ are measurable and bounded.
		
		\item[(iv)] The function $d_y(\cdot,y)$ is nonnegative a.e. in $\Omega$ for all $y\in\mathbb R$. 
		The function $b$ is nonnegative a.e. in $\partial\Omega$ and $|b|_{L^\infty(\partial\Omega)}>0$.
	\end{itemize}
\end{ssmptn}

Items $(i)$ and $(iv)$ of Assumption \ref{A1} ensure that the partial differential equations appearing in this paper have 
unique solutions in the space $H^1(\Omega)\cap L^\infty(\Omega)$.

\subsection{The elliptic operator}
We consider the set $D(\mathcal L)$ of all functions $y\in H^1(\Omega)\cap L^\infty(\Omega)$ for which there exists 
$h\in L^2(\Omega)$ such that 
\begin{align}\label{weakformulation}
\int_{\Omega}A(x)\nabla y\cdot\nabla\varphi\,dx+\int_{\partial\Omega}b(x)y\varphi\,ds(x)=\int_{\Omega}h\varphi\,dx
\quad\forall \varphi\in H^1(\Omega).
\end{align}
As usual, $ds$ denotes the Lebesgue surface measure. It is easy to see that for each $y\in D(\mathcal L)$ there 
exists a unique element $h\in L^2(\Omega)$ such that (\ref{weakformulation}) holds. We define the operator 
$\mathcal L:D(\mathcal L)\to L^2(\Omega)$ by assigning each $y\in D(\mathcal L)$ to the function $h\in L^2(\Omega)$ 
satisfying (\ref{weakformulation}). 
By definition, a function $y\in H^1(\Omega)\cap L^\infty(\Omega)$ belongs to $D(\mathcal L)$ if, and only if, it is the weak solution of the linear elliptic partial differential equation
\begin{align*}
\left\{ \begin{array}{cclcc}
-\dive\big(A(x)\nabla y\big)&=&h& \text{in}& \Omega, \\
\\ A(x)\nabla y\cdot \nu+b(x)y&=&0&\text{on}& \partial\Omega
\end{array} \right.
\end{align*}
for some $h\in L^2(\Omega)$. The following lemma is of trivial nature.
\begin{lmm}
	The set $D(\mathcal L)$ is a linear subspace of $H^1(\Omega)\cap L^\infty(\Omega)$. Moreover, 
	the operator $\mathcal L:D(\mathcal L)\to L^2(\Omega)$ is a well defined linear mapping.
\end{lmm}

If $D(\mathcal L)$ is endowed with the norm of $L^2(\Omega)$, then $\mathcal L$ is an unbounded operator 
from $D(\mathcal L)$ to $L^2(\Omega)$. Since $A(x)$ is symmetric for a.e. $x\in\Omega$, by (\ref{weakformulation}) 
we have 
\begin{align}\label{intbyparts}
\int_{\Omega} \mathcal Ly\varphi\, dx=\int_\Omega y\mathcal L \varphi\, dx
\end{align}
for all $y,\varphi\in D(\mathcal L)$, the so-called integration by parts formula.

\begin{rmrk}
	If $\partial\Omega$ is of class $C^{1,1}$, $A$ is Lipschitz in $\bar\Omega$, and $b$ is Lipschitz and 
	positive in $\partial\Omega$, then
	\begin{align*}
	D(\mathcal L)= \{y\in H^2(\Omega):A(\cdot)\nabla y\cdot \nu+b(\cdot)y=0 \},
	\end{align*}
	and $\mathcal Ly=-\dive\big(A(\cdot)\nabla y\big)$ for all $y\in  D(\mathcal L)$, see \cite[Theorem 2.4.2.6]{Grisvard}.
\end{rmrk}
The following lemma shows the inclusion $D(\mathcal L)\subset C(\bar\Omega)$. Its proof can be found in 
\cite[Theorem 4.7]{TroeltzschPde} and follows the arguments in \cite{Casas93,Stampacchiaelliptiques}.

\begin{lmm}\label{L1e}
	Let $\alpha\in L^\infty(\Omega)$ be nonnegative and $h\in L^2(\Omega)$. There exists a unique 
	function $y\in D(\mathcal L)$ such that 
	\begin{align}\label{linequ}
	\mathcal Ly+\alpha(\cdot)y=h
	\end{align}
	and this function belongs to $C(\bar\Omega)$. Moreover, for each $r>n/2$ there exists a positive number $c$ such that 
	\begin{align*}
	|y|_{H^1(\Omega)\cap C(\bar\Omega)}\le c|h|_{L^r(\Omega)}
	\end{align*}
	for all $\alpha\in L^\infty(\Omega)$ nonnegative, $y\in D(\mathcal L)$, and $h\in L^2(\Omega)\cap L^r(\Omega)$ 
	satisfying (\ref{linequ}).
\end{lmm}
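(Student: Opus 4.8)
The plan is to obtain the $H^1(\Omega)$ solution from the Lax--Milgram theorem, to upgrade it to an $L^\infty(\Omega)$ bound by Stampacchia's truncation method, and finally to invoke the interior and boundary regularity theory for elliptic equations to reach $C(\bar\Omega)$; this is precisely the route followed in \cite[Theorem~4.7]{TroeltzschPde}, after \cite{Casas93,Stampacchiaelliptiques}. Writing the equation $\mathcal L y+\alpha(\cdot)y=h$ in weak form amounts to finding $y\in H^1(\Omega)$ with
\begin{align*}
a(y,\varphi):=\int_\Omega A\nabla y\cdot\nabla\varphi\,dx+\int_{\partial\Omega}b\,y\varphi\,ds(x)+\int_\Omega\alpha\,y\varphi\,dx=\int_\Omega h\varphi\,dx\quad\forall\varphi\in H^1(\Omega).
\end{align*}
Continuity of the bilinear form $a$ on $H^1(\Omega)\times H^1(\Omega)$ is immediate from the boundedness of $A$, $b$ and $\alpha$ (Assumption~\ref{A1}(iii) and the hypothesis on $\alpha$) together with the trace inequality $|y|_{L^2(\partial\Omega)}\le C|y|_{H^1(\Omega)}$. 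For coercivity I would use the ellipticity constant $m>0$ of Assumption~\ref{A1}(i) to bound $a(y,y)$ from below by $m\int_\Omega|\nabla y|^2\,dx+\int_{\partial\Omega}b\,y^2\,ds(x)$, discarding the nonnegative term $\int_\Omega\alpha y^2\,dx$. Since $b\ge 0$ and $|b|_{L^\infty(\partial\Omega)}>0$, the set $\{b>\delta\}$ has positive surface measure for some $\delta>0$, and a generalized Poincar\'e inequality then bounds $|y|_{H^1(\Omega)}^2$ by a multiple of $\int_\Omega|\nabla y|^2\,dx+\int_{\partial\Omega}b\,y^2\,ds(x)$, so $a$ is coercive with a constant independent of $\alpha$. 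Lax--Milgram yields a unique $y\in H^1(\Omega)$, and uniqueness in $D(\mathcal L)$ follows by testing the homogeneous equation with $\varphi=y$ and using coercivity.

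To place $y$ in $D(\mathcal L)$, I would set $\tilde h:=h-\alpha(\cdot)y\in L^2(\Omega)$ and note that the identity above reads $\int_\Omega A\nabla y\cdot\nabla\varphi\,dx+\int_{\partial\Omega}b\,y\varphi\,ds(x)=\int_\Omega\tilde h\varphi\,dx$ for all $\varphi\in H^1(\Omega)$, which is exactly the defining relation (\ref{weakformulation}) with right-hand side $\tilde h$; hence $y\in D(\mathcal L)$ and $\mathcal L y+\alpha(\cdot)y=h$, provided $y\in L^\infty(\Omega)$. The latter is the technical core. For $k\ge 0$ I would test with $\varphi=(y-k)^+$: the first-order term is bounded below by $m\int_\Omega|\nabla(y-k)^+|^2\,dx$, while the boundary and zeroth-order terms are nonnegative on $\{y>k\}$ and can be dropped, giving $m\int_\Omega|\nabla(y-k)^+|^2\,dx\le\int_{\{y>k\}}h\,(y-k)^+\,dx$. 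Combining the Sobolev embedding $H^1(\Omega)\hookrightarrow L^{2^*}(\Omega)$ (with $2^*=6$ for $n=3$ and any large finite exponent for $n=2$) with H\"older's inequality, and using $r>n/2$ so that the measure $|\{y>k\}|$ enters with a power strictly larger than one, produces the recursive inequality to which Stampacchia's lemma applies; that lemma forces $|\{y>k\}|=0$ once $k\ge c|h|_{L^r(\Omega)}$, i.e. $\esssup y\le c|h|_{L^r(\Omega)}$. Repeating the argument with $-y$ gives the two-sided bound $|y|_{L^\infty(\Omega)}\le c|h|_{L^r(\Omega)}$. The same duality $\int_\Omega h\varphi\le|h|_{L^{(2^*)'}(\Omega)}|\varphi|_{L^{2^*}(\Omega)}$ applied to $\varphi=y$, together with coercivity, yields $|y|_{H^1(\Omega)}\le c|h|_{L^r(\Omega)}$, since $r>n/2$ guarantees $L^r(\Omega)\hookrightarrow L^{(2^*)'}(\Omega)$ on the bounded domain. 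Note that for $n\in\{2,3\}$ one has $2>n/2$, so every $h\in L^2(\Omega)$ is covered by taking $r=2$, which is why no extra integrability is needed for the first assertion.

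It remains to pass from $L^\infty(\Omega)$ to $C(\bar\Omega)$. This is a De Giorgi--Nash--Moser type continuity result for the Robin problem on a Lipschitz domain, and I would cite \cite[Theorem~4.7]{TroeltzschPde} (following \cite{Casas93,Stampacchiaelliptiques}) rather than reprove it; combining it with the previous two estimates gives $y\in H^1(\Omega)\cap C(\bar\Omega)$ together with $|y|_{H^1(\Omega)\cap C(\bar\Omega)}\le c|h|_{L^r(\Omega)}$, where $c=c(r)$ is independent of $\alpha$ because the zeroth-order term was only ever discarded and never used quantitatively. The main obstacle is the Stampacchia iteration: the careful bookkeeping of the Sobolev and H\"older exponents so that the superlevel-set measures appear with a superlinear power --- which is exactly the role of the hypothesis $r>n/2$ --- and, to a lesser extent, the boundary continuity, which genuinely requires the elliptic regularity machinery and is cleanest to quote.
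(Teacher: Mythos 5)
Your proposal is correct and follows essentially the same route as the paper: the paper proves Lemma \ref{L1e} purely by citation to \cite[Theorem 4.7]{TroeltzschPde} (after \cite{Casas93,Stampacchiaelliptiques}), and your Lax--Milgram plus Stampacchia truncation plus cited boundary-continuity argument is precisely the content of that reference. One small point to watch: for the Robin problem the truncation $(y-k)^+$ does not vanish on $\partial\Omega$, so $\int_\Omega|\nabla (y-k)^+|^2\,dx$ alone does not control $|(y-k)^+|_{H^1(\Omega)}$; in the iteration you should retain the boundary term (using $y(y-k)^+\ge [(y-k)^+]^2$ for $k\ge 0$) and invoke the same generalized Poincar\'e inequality as in your coercivity step, rather than discard it before applying the Sobolev embedding.
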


The following technical lemma can be deduced from Lemma \ref{L1e}, see the proof of \cite[Lemma 3.4]{Casanum}. Its use in optimal control of elliptic
partial differential equations dates from the paper \cite[Lemma 2.6]{Casasbang}. It has shown to be useful 
for diverse estimates, see \cite{Casasbang,Wachelliptic}.

\begin{lmm}\label{L2e}
	There exists a positive number $c$ such that 
	\begin{align*}
	|y|_{L^2(\Omega)}\le c|h|_{L^1(\Omega)}
	\end{align*}
	for all nonnegative $\alpha\in L^\infty(\Omega)$, $y\in D(\mathcal L)$ and $h\in L^2(\Omega)$ satisfying (\ref{linequ}).
\end{lmm}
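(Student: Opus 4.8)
The plan is to use a duality (Aubin--Nitsche type) argument, exploiting the self-adjointness of the operator $\mathcal L+\alpha(\cdot)$ that follows from the symmetry of $A$. Starting from the duality characterization of the $L^2$-norm,
\begin{align*}
|y|_{L^2(\Omega)}=\sup\Big\{\textstyle\int_\Omega y\phi\,dx : \phi\in L^2(\Omega),\ |\phi|_{L^2(\Omega)}\le 1\Big\},
\end{align*}
I would fix an arbitrary $\phi\in L^2(\Omega)$ with $|\phi|_{L^2(\Omega)}\le 1$ and introduce the adjoint state $z\in D(\mathcal L)$ solving the problem with the same structure, namely $\mathcal Lz+\alpha(\cdot)z=\phi$. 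Existence and uniqueness of such a $z$ are guaranteed by Lemma \ref{L1e}. Since $n\in\{2,3\}$, the choice $r=2>n/2$ is admissible there, and because $\phi\in L^2(\Omega)=L^2(\Omega)\cap L^2(\Omega)$, applying Lemma \ref{L1e} yields $z\in C(\bar\Omega)$ together with
\begin{align*}
|z|_{C(\bar\Omega)}\le|z|_{H^1(\Omega)\cap C(\bar\Omega)}\le c\,|\phi|_{L^2(\Omega)}\le c,
\end{align*}
where $c$ is independent of $\alpha$ and of $\phi$.

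The decisive step is to convert the pairing $\int_\Omega y\phi\,dx$ into a pairing against $h$. Since both $y$ and $z$ lie in $D(\mathcal L)$, the integration by parts formula (\ref{intbyparts}) applies to move $\mathcal L$ from $z$ onto $y$, while the term $\int_\Omega\alpha yz\,dx$ is trivially symmetric in $y$ and $z$. Substituting $\phi=\mathcal Lz+\alpha(\cdot)z$, I obtain
\begin{align*}
\int_\Omega y\phi\,dx=\int_\Omega y\,\mathcal Lz\,dx+\int_\Omega\alpha yz\,dx=\int_\Omega(\mathcal Ly)z\,dx+\int_\Omega\alpha yz\,dx=\int_\Omega(\mathcal Ly+\alpha y)z\,dx=\int_\Omega hz\,dx.
\end{align*}
This identity is the heart of the argument: it trades the merely $L^2$ factor $\phi$ for the now $L^\infty$ factor $z$, which is exactly what allows an $L^1$-norm of $h$ to appear.

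With the identity in hand, I would estimate by H\"older's inequality, pairing $L^1$ against $L^\infty$, to get $|\int_\Omega y\phi\,dx|=|\int_\Omega hz\,dx|\le|h|_{L^1(\Omega)}\,|z|_{C(\bar\Omega)}\le c\,|h|_{L^1(\Omega)}$, and then take the supremum over all admissible $\phi$ to conclude $|y|_{L^2(\Omega)}\le c\,|h|_{L^1(\Omega)}$. The only point requiring care, and the main obstacle, is that the final constant $c$ must not depend on $\alpha$: this is precisely where Lemma \ref{L1e} does the real work, since its estimate is \emph{uniform} over all nonnegative $\alpha\in L^\infty(\Omega)$, and that uniformity is inherited by the bound on $|z|_{C(\bar\Omega)}$ and hence by the final constant. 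One must also keep in mind that the legitimacy of applying (\ref{intbyparts}) rests on the adjoint problem having a genuine solution in $D(\mathcal L)$, which is again supplied by the existence statement of Lemma \ref{L1e}.
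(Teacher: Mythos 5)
Your proof is correct and is essentially the paper's own argument: the paper also uses duality via Lemma \ref{L1e} and the integration by parts formula (\ref{intbyparts}), the only cosmetic difference being that it tests directly with the single choice of right-hand side $y$ (solving $\mathcal L\varphi+\alpha(\cdot)\varphi=y$, obtaining $|y|_{L^2(\Omega)}^2=\int_\Omega h\varphi\,dx\le c\,|y|_{L^2(\Omega)}|h|_{L^1(\Omega)}$ and cancelling) instead of taking a supremum over the unit ball of $L^2(\Omega)$. Both versions hinge on the same two points you identified: the $\alpha$-uniformity of the constant in Lemma \ref{L1e} and the admissibility of $r=2>n/2$ for $n\in\{2,3\}$.
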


{The proof of the next result can be found in
	\cite[Theorem 2.11]{Casasnonmonotone} in the case of a Dirichlet problem, see also \cite[Lemma 6.8]{Dintelmann}. Here we adapt the  
	argument below Theorem 2.1 in \cite[p. 618]{Delosreyes}}.

\begin{lmm}\label{weakconv}
	Let $\alpha\in L^\infty(\Omega)$ be nonnegative,  $\{h_m\}_{m=1}^\infty$ be a sequence in $L^2(\Omega)$ 
	and $h\in L^2(\Omega)$. For each $m\in\mathbb N$, let $y_m\in C(\bar\Omega)$ be the unique function 
	satisfying $\mathcal L y_m+\alpha(\cdot)y_m=h_m$, and let $y\in C(\bar\Omega)$ be the unique function satisfying 
	of $\mathcal Ly+\alpha(\cdot)y=h$. If $h_m\rightharpoonup h$ weakly in $L^2(\Omega)$, then $y_m\to y$ in $C(\bar\Omega)$.
\end{lmm}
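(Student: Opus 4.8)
The plan is to combine a uniform a priori bound---which yields compactness of $\{y_m\}$ in $C(\bar\Omega)$---with a duality argument identifying the limit, and then to pass through subsequences. Being weakly convergent, $\{h_m\}$ is bounded in $L^2(\Omega)$. Since $n\in\{2,3\}$ gives $n/2<2$, Lemma \ref{L1e} applies with $r=2$ and yields, with a constant independent of $m$, the bound $|y_m|_{H^1(\Omega)\cap C(\bar\Omega)}\le c\,|h_m|_{L^2(\Omega)}$; hence $\{y_m\}$ is bounded in $H^1(\Omega)\cap C(\bar\Omega)$. The step I expect to be the main obstacle is upgrading this to \emph{compactness} in $C(\bar\Omega)$: weak convergence of the data provides no rate, so $|h_m|_{L^2(\Omega)}$ need not be small and Lemma \ref{L1e} applied to $y_m-y$ gives nothing by itself. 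To overcome this I would invoke the global H\"older regularity underlying Lemma \ref{L1e} --- the De Giorgi--Nash--Moser / Stampacchia estimates of \cite{Casas93,Stampacchiaelliptiques} cited there --- which upgrades the $L^r$ bound on the data, with $r=2>n/2$, to a uniform bound $|y_m|_{C^{0,\gamma}(\bar\Omega)}\le c$ for some $\gamma\in(0,1)$. By the Arzel\`a--Ascoli theorem, $\{y_m\}$ is then precompact in $C(\bar\Omega)$.

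To identify the limit I would show directly that $y_m\to y$ in $L^2(\Omega)$. Set $z_m:=y_m-y\in D(\mathcal L)$, so that $\mathcal Lz_m+\alpha(\cdot)z_m=h_m-h$. First, $z_m\rightharpoonup 0$ weakly in $L^2(\Omega)$: for fixed $\phi\in L^2(\Omega)$, let $\psi\in D(\mathcal L)$ be the solution of $\mathcal L\psi+\alpha(\cdot)\psi=\phi$ provided by Lemma \ref{L1e}; then the integration by parts formula (\ref{intbyparts}) gives $\int_\Omega z_m\phi\,dx=\int_\Omega(h_m-h)\psi\,dx\to0$, since $h_m\rightharpoonup h$ and $\psi$ is fixed. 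Second, the solution operator $G:L^2(\Omega)\to L^2(\Omega)$ that sends $f$ to the unique $w\in D(\mathcal L)$ with $\mathcal Lw+\alpha(\cdot)w=f$ is compact, because by Lemma \ref{L1e} it maps $L^2(\Omega)$ boundedly into $H^1(\Omega)$, which embeds compactly into $L^2(\Omega)$. Writing $\psi_m:=Gz_m$, the weak convergence $z_m\rightharpoonup0$ forces $\psi_m\to0$ strongly in $L^2(\Omega)$, and by (\ref{intbyparts}) once more,
\begin{align*}
|z_m|_{L^2(\Omega)}^2&=\int_\Omega z_m\big(\mathcal L\psi_m+\alpha(\cdot)\psi_m\big)\,dx=\int_\Omega\big(\mathcal Lz_m+\alpha(\cdot)z_m\big)\psi_m\,dx\\
&=\int_\Omega(h_m-h)\psi_m\,dx.
\end{align*}
As $\{h_m\}$ is bounded in $L^2(\Omega)$ and $\psi_m\to0$ in $L^2(\Omega)$, the right-hand side tends to $0$, so $y_m\to y$ in $L^2(\Omega)$.

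Finally I would combine the two ingredients by a subsequence argument. Fix any subsequence of $\{y_m\}$; by precompactness in $C(\bar\Omega)$ it has a further subsequence converging in $C(\bar\Omega)$ to some $\zeta$, which then also converges to $\zeta$ in $L^2(\Omega)$. Since $y_m\to y$ in $L^2(\Omega)$, necessarily $\zeta=y$. Thus every subsequence admits a further subsequence converging to the same limit $y$ in $C(\bar\Omega)$, whence the whole sequence converges, $y_m\to y$ in $C(\bar\Omega)$. As an alternative route, the $L^2$-limit could instead be identified by extracting a weakly $H^1$-convergent subsequence and passing to the limit in the weak formulation (\ref{weakformulation}), using the weak continuity of the trace operator and uniqueness in Lemma \ref{L1e}; but in every version the decisive new ingredient is the uniform H\"older bound that furnishes Arzel\`a--Ascoli compactness in $C(\bar\Omega)$.
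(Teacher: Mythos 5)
Your proof is correct, but it takes a genuinely different route from the paper's. The paper moves the compactness to the \emph{data} space: for $p\in(2n/(n+2),n/(n-1))$ the embedding $W^{1,p}(\Omega)\hookrightarrow L^2(\Omega)$ is compact, hence by Schauder's theorem $L^2(\Omega)$ embeds compactly into $W^{1,p}(\Omega)^*$, so $h_m\rightharpoonup h$ weakly in $L^2(\Omega)$ forces $h_m\to h$ strongly in $W^{1,p}(\Omega)^*$; it then invokes \cite[Theorem 3.14]{Robin}, which asserts that the solution operator is continuous from $L^2(\Omega)$ endowed with the $W^{1,p}(\Omega)^*$-norm into $C(\bar\Omega)$, and the conclusion follows at once --- no subsequences, no Arzel\`a--Ascoli, no identification-of-the-limit step. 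You instead put the compactness on the \emph{solution} side (uniform $C^{0,\gamma}$ bound plus Arzel\`a--Ascoli), identify the limit through a duality argument in $L^2(\Omega)$ --- which is precisely the mechanism behind Lemma \ref{L2e} --- and finish with the standard subsequence trick; all three of these steps are sound as written. The one point to flag is that your decisive ingredient, the uniform bound $|y_m|_{C^{0,\gamma}(\bar\Omega)}\le c$, is genuinely stronger than anything stated in the paper: Lemma \ref{L1e} gives only $H^1(\Omega)\cap C(\bar\Omega)$ bounds, Stampacchia's estimates in \cite{Stampacchiaelliptiques} concern Dirichlet conditions, and \cite{Casas93} yields continuity rather than H\"older continuity, so the global H\"older estimate for a Robin problem on a merely Lipschitz domain must be imported from the regularity literature (its natural source being, in fact, the very reference \cite{Robin} the authors cite). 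Thus both proofs ultimately lean on the same nontrivial regularity theory; the paper extracts from it an operator-continuity statement that makes the lemma a one-line deduction, while your version extracts a H\"older bound and then runs a classical a-priori-bound/compactness/uniqueness-of-limit argument, which is more elementary in structure but longer and dependent on that extra regularity input.
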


\begin{proof}
	Let  $p\in(2n/(n+2),n/(n-1))$.
	Then  $W^{1,p}(\Omega)$  is compactly embedded in $L^2(\Omega)$ and consequently, by Schauder's  Theorem, 
	$L^2(\Omega)$ is compactly embedded in $W^{1,p}(\Omega)^*$. By the latter compact embedding, every weakly 
	convergent sequence in $L^2(\Omega)$ converges also in $W^{1,p}(\Omega)^*$ to the same limit.  
	Define $\mathcal K:L^2(\Omega)\to C(\bar\Omega)$ by $\mathcal Kh:= y$, where $y\in C(\bar\Omega)$ is the unique 
	function satisfying $\mathcal Ly+\alpha(\cdot)=h$.
	The result follows from \cite[Theorem 3.14]{Robin}, since that theorem asserts that the linear operator $\mathcal K$ 
	is continuous from $L^2(\Omega)$ endowed with the norm of $W^{1,p}(\Omega)^*$ to $C(\bar\Omega)$.
\end{proof}

\begin{rmrk}\label{remark1}
	Using the definitions of the set $D(\mathcal L)$ and the operator $\mathcal L$, we can write in a shorter way 
	the partial differential equations involved in this paper.  For example, given $u\in\mathcal U$, to say that $y$ 
	belongs to $D(\mathcal L)$ and satisfies  $\mathcal Ly+d(\cdot,y)=\beta(\cdot)u$ 
	is equivalent to say that $y$ belongs to $ H^1(\Omega)\cap L^\infty(\Omega)$ and satisfies the weak formulation of (\ref{system}), that is
	\begin{align*}
	\int_{\Omega}A(x)\nabla y\cdot\nabla\varphi\,dx+\int_{\Omega}d(x,y)\varphi\,dx+
	\int_{\partial\Omega}b(x)y\varphi\,ds(x)=\int_{\Omega}\beta(x)u \varphi\,dx
	\end{align*}
	for all $\varphi\in H^1(\Omega)$. This weak formulation makes sense since, by $(ii)$ and $(iii)$ of Assumption \ref{A1}, 
	for any $y\in L^\infty(\Omega)$, the function $d(\cdot,y)$ belongs to $L^\infty(\Omega)$.
\end{rmrk}


\subsection{The control model}

Having in mind Remark \ref{remark1}, given a function $u\in\mathcal U$ we say that $y_u\in D(\mathcal L)$ 
is the associated state to $u\in\mathcal U$ if
\begin{align}\label{stateeq}
\mathcal Ly_u=f(\cdot,y_u,u).
\end{align}

The following proposition shows that the {mapping} $u\to y_u$ from $ \mathcal{U}$ to $D(\mathcal L)$ 
is well defined. Its proof can be found in the standard literature; it follows from 
\cite[Theorem 4.8]{TroeltzschPde}, see also \cite[p. 212]{TroeltzschPde}.

\begin{prpstn}\label{contstate}
	For each $u\in\mathcal U$ there exists a unique state $y_u\in D(\mathcal L)$ associated with $u\in\mathcal U$. 
	Moreover,  $\{y_u:u\in\mathcal U\}$ is a bounded subset of $H^1(\Omega)\cap C(\bar\Omega)$ and for each $r>n/2$ 
	there exists $c>0$ such that
	\begin{align*}
	|y_{u_2}-y_{u_1}|_{H^1(\Omega)\cap C(\bar\Omega)}\le c|u_2-u_1|_{L^r(\Omega)}
	\end{align*}
	for all $u_1,u_2\in\mathcal U$.
\end{prpstn}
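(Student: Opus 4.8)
The plan is to exploit a single device throughout, namely a \emph{monotone linearization} of the nonlinearity $d$. Whenever two instances of the state equation are subtracted (or one instance is compared with the value at $y=0$), the difference $d(\cdot,y_2)-d(\cdot,y_1)$ can be written as $\alpha(\cdot)(y_2-y_1)$ with
\[
\alpha(x):=\int_0^1 d_y\big(x,y_1(x)+t(y_2(x)-y_1(x))\big)\,dt,
\]
which by Assumption \ref{A1}$(iv)$ is nonnegative and, since $y_1,y_2$ are bounded and $d_y$ is bounded on bounded sets (Assumption \ref{A1}$(ii)$--$(iii)$), belongs to $L^\infty(\Omega)$. This reduces every estimate to the \emph{linear} equation (\ref{linequ}), for which Lemma \ref{L1e} and Lemma \ref{L2e} provide bounds whose constants do \emph{not} depend on $\alpha$. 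Consequently, once existence is settled, the uniform boundedness and the Lipschitz estimate follow almost immediately.

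First I would dispose of uniqueness and the two estimates (the latter conditional on existence, proved below). If $y_1,y_2\in D(\mathcal L)$ both solve (\ref{stateeq}) for the same $u$, then $\mathcal L(y_1-y_2)+\alpha(\cdot)(y_1-y_2)=0$ with $\alpha\ge0$ as above, so Lemma \ref{L2e} (with right-hand side $0$) forces $y_1=y_2$. For the uniform bound, writing $d(\cdot,y_u)=d(\cdot,0)+\alpha_u(\cdot)y_u$ with $\alpha_u\ge0$ in $L^\infty(\Omega)$ turns (\ref{stateeq}) into
\[
\mathcal L y_u+\alpha_u(\cdot)y_u=\beta(\cdot)u-d(\cdot,0)=:h_u .
\]
Since $\mathcal U$ is bounded in $L^\infty(\Omega)$ and $\beta,d(\cdot,0)\in L^\infty(\Omega)$, the family $\{h_u\}$ is bounded in $L^r(\Omega)$ for any $r>n/2$; Lemma \ref{L1e} then gives $|y_u|_{H^1(\Omega)\cap C(\bar\Omega)}\le c\,|h_u|_{L^r(\Omega)}\le C$ with $c$, hence $C$, independent of $u$. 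The Lipschitz estimate is obtained the same way: subtracting the equations for $u_1,u_2$ yields $\mathcal L(y_{u_2}-y_{u_1})+\alpha(\cdot)(y_{u_2}-y_{u_1})=\beta(\cdot)(u_2-u_1)$ with $\alpha\ge0$ in $L^\infty(\Omega)$ (bounded uniformly thanks to the bound just proved), and Lemma \ref{L1e} gives
\[
|y_{u_2}-y_{u_1}|_{H^1(\Omega)\cap C(\bar\Omega)}\le c\,|\beta(\cdot)(u_2-u_1)|_{L^r(\Omega)}\le c\,|\beta|_{L^\infty(\Omega)}\,|u_2-u_1|_{L^r(\Omega)} .
\]

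The genuine work is the existence statement, and this is the step I expect to be the main obstacle, because the linear theory of Lemma \ref{L1e} cannot be invoked until the argument of $d$ is already known to be bounded. I would handle this by truncation: for $k>0$ set $d_k(x,y):=d(x,P_k y)$, where $P_k y:=\max\{-k,\min\{k,y\}\}$, so that $d_k$ is bounded, globally Lipschitz in $y$, and still nondecreasing in $y$. The truncated equation $\mathcal L y+d_k(\cdot,y)=\beta(\cdot)u$ is solvable: the associated operator on $H^1(\Omega)$ is monotone, hemicontinuous and coercive---coercivity of the bilinear form $\int_\Omega A\nabla y\cdot\nabla y\,dx+\int_{\partial\Omega}by^2\,ds$ following from the ellipticity in Assumption \ref{A1}$(i)$ together with $b\ge0$, $|b|_{L^\infty(\partial\Omega)}>0$ in $(iv)$---so the Browder--Minty theorem yields a solution $y_k\in H^1(\Omega)$; since now $d_k(\cdot,y_k)\in L^\infty(\Omega)$, Lemma \ref{L1e} upgrades $y_k$ to $D(\mathcal L)\subset C(\bar\Omega)$. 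Finally, the very same monotone linearization applied to the truncated equation gives an a priori bound $|y_k|_{C(\bar\Omega)}\le M_0$ with $M_0$ \emph{independent of} $k$ (again because the constant in Lemma \ref{L1e} does not see $\alpha$); choosing $k>M_0$ forces $P_k y_k=y_k$, hence $d_k(\cdot,y_k)=d(\cdot,y_k)$, and $y_k$ solves the original equation (\ref{stateeq}). The routine points left to check are the continuity and boundedness of the truncated Nemytskii operator and the standard verification of monotonicity and coercivity; the substantive idea is the truncation-uniform a priori bound supplied by Lemma \ref{L1e}.
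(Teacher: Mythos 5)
Your proof is correct. Note that the paper does not actually prove Proposition \ref{contstate}; it defers entirely to \cite[Theorem 4.8, p.\ 212]{TroeltzschPde}, so the comparison is with that standard reference rather than with an argument printed in the paper. Your route --- truncate $d$ at level $k$, solve the truncated problem by Browder--Minty (coercivity of $\int_\Omega A\nabla y\cdot\nabla y\,dx+\int_{\partial\Omega}b\,y^2\,ds(x)$ coming from Assumption \ref{A1}$(i)$,$(iv)$ via a Friedrichs-type inequality), bootstrap the solution into $D(\mathcal L)\cap C(\bar\Omega)$ with Lemma \ref{L1e}, and then remove the truncation by a $k$-independent a priori bound --- is essentially a reconstruction of the proof in that reference. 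What your write-up makes explicit, and what is indeed the decisive point, is that the constants in Lemmas \ref{L1e} and \ref{L2e} are independent of the nonnegative coefficient $\alpha$, so that the monotone linearization reduces uniqueness, the uniform bound, and the Lipschitz estimate to the linear theory; this is exactly the device the paper itself uses in the proof of Proposition \ref{concost} and in the (omitted) proofs of Proposition \ref{contstatel1} and Lemma \ref{L2e}.

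Two small technical points you should tighten. First, for the truncated nonlinearity $d_k(x,y)=d(x,P_ky)$ you cannot define $\alpha_k$ as $\int_0^1 (d_k)_y(\cdot,ty_k)\,dt$, since $d_k$ is not differentiable at $|y|=k$; use instead the difference quotient $\alpha_k:=\big(d_k(\cdot,y_k)-d_k(\cdot,0)\big)/y_k$ (set to $0$ where $y_k=0$), which is nonnegative by monotonicity of $d_k(x,\cdot)$ and bounded by its Lipschitz property --- the same trick as in the commented-out proof of Proposition \ref{contstatel1}. Second, the ``upgrade'' of the Browder--Minty solution $y_k\in H^1(\Omega)$ to an element of $D(\mathcal L)$ requires uniqueness of weak $H^1$-solutions of the linear problem with the frozen right-hand side $h:=\beta u-d_k(\cdot,y_k)\in L^\infty(\Omega)$, since Lemma \ref{L1e} asserts uniqueness only within $D(\mathcal L)$; this follows from Lax--Milgram with the coercivity you already established, but it deserves an explicit sentence.
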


We call the function $\mathcal G:\mathcal U\to H^1(\Omega)\cap C(\bar\Omega)$ given by $\mathcal G(u):= y_u$ 
the control-to-state mapping. 
The functional $\mathcal J:\mathcal U\to\mathbb R$ given by
\begin{align*}
\mathcal J(u):=\int_{\Omega}g(x,y_{u},u)\,dx
\end{align*}
is called the objective functional of problem (\ref{cost})--(\ref{system}).

\begin{dfntn}
	Let $\bar u$ belong to $\mathcal U$.
	\begin{itemize}
		\item[(i)] We say that $\bar u$ is a global solution of problem (\ref{cost})--(\ref{system}) if  
		$\mathcal J(\bar u)\le\mathcal J(u)$  for all $u\in\mathcal U$.
		
		\item[(ii)] We say that $\bar u$ is a local solution of problem (\ref{cost})--(\ref{system}) if there exists 
		$\varepsilon_0>0$ such that $\mathcal J(\bar u)\le\mathcal J(u)$  for all $u\in\mathcal U$ 
		with $|u-\bar u|_{L^1(\Omega)}\le\varepsilon_0$.
		
		\item[(iii)] We say that $\bar u$ is a strict local solution of problem (\ref{cost})--(\ref{system}) if 
		there exists $\varepsilon_0>0$ such that $\mathcal J(\bar u)<\mathcal J(u)$  for all $u\in\mathcal 
		U$ with $u\neq \bar u$  and $|u-\bar u|_{L^1(\Omega)}\le\varepsilon_0$.
	\end{itemize}
\end{dfntn}

Under Assumption \ref{A1}, problem (\ref{cost})--(\ref{system}) has at least one global solution. 
The proof is routine and can be obtained by standard arguments; namely, taking a minimizing sequence 
and using the weak compactness of $\mathcal U$ in $L^2(\Omega)$.

\begin{lmm}\label{globalsol}
	Problem (\ref{cost})--(\ref{system}) has at least one global solution.
\end{lmm}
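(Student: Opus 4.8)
The plan is to prove existence of a global minimizer by the direct method of the calculus of variations. First I would take a minimizing sequence $\{u_k\}\subset\mathcal U$, that is, a sequence satisfying $\mathcal J(u_k)\to\inf_{u\in\mathcal U}\mathcal J(u)$. The infimum is finite: since $\mathcal U$ is bounded in $L^\infty(\Omega)$ (as $b_1,b_2$ are bounded), the associated states $y_{u_k}$ are uniformly bounded in $C(\bar\Omega)$ by Proposition \ref{contstate}, and then the integrand $g(x,y_{u_k},u_k)=w(x,y_{u_k})+s(x,y_{u_k})u_k$ is uniformly bounded by the continuity of $w,s$ in the second variable and the boundedness of the relevant data in Assumption \ref{A1}. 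Hence $\mathcal J$ is bounded below on $\mathcal U$ and a minimizing sequence exists.

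Next I would extract a weakly convergent subsequence. The set $\mathcal U$ is convex, bounded, and closed in $L^2(\Omega)$, hence weakly sequentially compact; so there is a subsequence (not relabeled) with $u_k\rightharpoonup\bar u$ weakly in $L^2(\Omega)$ for some $\bar u\in\mathcal U$. The crucial point is to pass to the limit in both the state equation and the objective. For the state, I would invoke Lemma \ref{weakconv}: writing the state equation as $\mathcal Ly_{u_k}+d(\cdot,y_{u_k})=\beta(\cdot)u_k$, the weak convergence $u_k\rightharpoonup\bar u$ gives $\beta(\cdot)u_k\rightharpoonup\beta(\cdot)\bar u$ weakly in $L^2(\Omega)$, and Lemma \ref{weakconv} then yields strong convergence of the corresponding states in $C(\bar\Omega)$, so that $y_{u_k}\to y_{\bar u}$ uniformly. (One must handle the nonlinear term $d(\cdot,y_{u_k})$; since the states converge uniformly, this term converges as well, and a standard fixed-point or continuity argument identifies the limit state with $y_{\bar u}$.)

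Finally I would show lower semicontinuity of $\mathcal J$ along this subsequence, in fact convergence. Split $\mathcal J(u_k)=\int_\Omega w(x,y_{u_k})\,dx+\int_\Omega s(x,y_{u_k})u_k\,dx$. For the first integral, uniform convergence $y_{u_k}\to y_{\bar u}$ and continuity of $w(x,\cdot)$ give $\int_\Omega w(x,y_{u_k})\,dx\to\int_\Omega w(x,y_{\bar u})\,dx$. For the second integral I would write
\begin{align*}
\int_\Omega s(x,y_{u_k})u_k\,dx=\int_\Omega\big[s(x,y_{u_k})-s(x,y_{\bar u})\big]u_k\,dx+\int_\Omega s(x,y_{\bar u})u_k\,dx;
\end{align*}
the first term tends to zero because $s(\cdot,y_{u_k})\to s(\cdot,y_{\bar u})$ uniformly while $\{u_k\}$ is bounded in $L^2(\Omega)$, and the second term converges to $\int_\Omega s(x,y_{\bar u})\bar u\,dx$ by the weak convergence $u_k\rightharpoonup\bar u$ together with $s(\cdot,y_{\bar u})\in L^2(\Omega)$. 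Combining, $\mathcal J(u_k)\to\mathcal J(\bar u)$, so $\mathcal J(\bar u)=\inf_{u\in\mathcal U}\mathcal J(u)$ and $\bar u$ is a global solution.

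The main obstacle is the passage to the limit in the product term $s(x,y_{u_k})u_k$, where the factor $u_k$ converges only weakly. The affine structure of the problem is precisely what makes this manageable: the control enters the objective linearly, so the product can be decomposed so that the weak convergence of $u_k$ is paired against the strongly convergent coefficient $s(\cdot,y_{\bar u})$. The role of Lemma \ref{weakconv} is essential here, as it upgrades weak convergence of the right-hand side to strong (uniform) convergence of the states, which is what allows the strong–weak pairing to close.
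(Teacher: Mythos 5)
Your overall strategy is exactly the paper's: direct method, weak sequential compactness of $\mathcal U$ in $L^2(\Omega)$, uniform convergence of the states, and the strong--weak splitting of the affine term $\int_\Omega s(x,y_{u_k})u_k\,dx$ so that the weakly convergent factor $u_k$ is paired with the strongly convergent coefficient $s(\cdot,y_{\bar u})$. The decomposition in your last display is verbatim the one used in the paper, and the conclusion $\mathcal J(u_k)\to\mathcal J(\bar u)$ is the same.

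The one step that does not close as written is the passage from $u_k\rightharpoonup\bar u$ to $y_{u_k}\to y_{\bar u}$ in $C(\bar\Omega)$. You invoke Lemma \ref{weakconv}, but that lemma applies to the \emph{linear} equation $\mathcal Ly+\alpha(\cdot)y=h$ with a fixed nonnegative coefficient $\alpha$ and weakly convergent right-hand sides $h_m\rightharpoonup h$. For the semilinear state equation you would have to place $d(\cdot,y_{u_k})$ on the right-hand side (absorbing it into $\alpha$ is not allowed, since the coefficient would then change with $k$), and to know that $\beta(\cdot) u_k-d(\cdot,y_{u_k})$ converges weakly you already need information on the behavior of $y_{u_k}$; your parenthetical ``since the states converge uniformly, this term converges as well'' assumes precisely what is being proved. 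The gap is filled either by citing Proposition \ref{weakconvergence}, which asserts exactly this implication and is what the paper's own proof uses, or by supplying the compactness argument behind it: $\{y_{u_k}\}$ is bounded in $H^1(\Omega)\cap C(\bar\Omega)$ by Proposition \ref{contstate}, so by the compact embedding $H^1(\Omega)\hookrightarrow L^2(\Omega)$ a subsequence converges strongly in $L^2(\Omega)$ to some $y$; then $d(\cdot,y_{u_k})\to d(\cdot,y)$ in $L^2(\Omega)$ by the uniform local Lipschitz property, so the right-hand sides $\beta(\cdot) u_k-d(\cdot,y_{u_k})$ do converge weakly, Lemma \ref{weakconv} (with $\alpha\equiv0$) yields uniform convergence of the states, and uniqueness of the state identifies the limit with $y_{\bar u}$; a subsequence-of-subsequences argument then gives convergence of the whole sequence. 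With that step repaired, the rest of your argument coincides with the paper's proof.
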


In order to make notation simpler, from now on we  fix a local solution $\bar u\in\mathcal U$ of problem 
(\ref{cost})--(\ref{system}). We call the function $H:\Omega\times\mathbb R\times\mathbb R\times\mathbb R\to\mathbb R$, given by
\begin{align*}
H(x,y,p,u) := g(x,y,u)+ p f(x,y,u),
\end{align*}
the  Hamiltonian of problem (\ref{cost})--(\ref{system}).
Given $u\in\mathcal U$, we say that $p_u\in D(\mathcal L)$ is the costate associated with $u\in\mathcal 
U$ if 
$$
\mathcal Lp_u=H_y(\cdot,y_u,p_u,u).
$$
The following proposition shows that the {mapping} $u\to p_u$ from 
$\mathcal{U}$ to $D(\mathcal L)$ is well defined.
We {give} the proof of this elementary result because it {seems not to be} explicitly stated in the literature. 
\begin{prpstn}\label{concost}
	For each $u\in\mathcal U$ there exists a unique costate $p_u\in D(\mathcal L)$ associated with $u\in\mathcal U$. Moreover, $\{p_u:u\in\mathcal U\}$ is a bounded subset of $H^1(\Omega)\cap C(\bar\Omega)$ and for each $r>n/2$ there exist $c>0$ such that
	\begin{align*}
	|p_{u_2}-p_{u_1}|_{H^1(\Omega)\cap C(\bar\Omega)}\le c|u_2-u_1|_{L^r(\Omega)}
	\end{align*}
	for all $u_1,u_2\in\mathcal U$. 
\end{prpstn}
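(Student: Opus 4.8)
The plan is to recognize the costate equation as a linear elliptic equation of the type treated in Lemma \ref{L1e} and then to mimic, at the level of the adjoint variable, the argument behind Proposition \ref{contstate}. First I would compute $H_y$ explicitly: since $f(x,y,u)=\beta(x)u-d(x,y)$ and $g(x,y,u)=w(x,y)+s(x,y)u$, one has $H_y(x,y,p,u)=w_y(x,y)+s_y(x,y)u-p\,d_y(x,y)$, so the defining relation $\mathcal L p_u=H_y(\cdot,y_u,p_u,u)$ is equivalent to
\[
\mathcal L p_u+d_y(\cdot,y_u)\,p_u=w_y(\cdot,y_u)+s_y(\cdot,y_u)\,u.
\]
This is exactly equation (\ref{linequ}) with $\alpha:=d_y(\cdot,y_u)$ and $h:=w_y(\cdot,y_u)+s_y(\cdot,y_u)u$. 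By Assumption \ref{A1}$(iv)$ the coefficient $\alpha$ is nonnegative, and since $y_u\in C(\bar\Omega)$ (Proposition \ref{contstate}) takes values in a bounded range, the regularity hypotheses in Assumption \ref{A1}$(ii)$--$(iii)$ make $\alpha$ and $h$ bounded, whence $\alpha\in L^\infty(\Omega)$ and $h\in L^2(\Omega)\cap L^r(\Omega)$ for every $r>n/2$. Existence, uniqueness and the inclusion $p_u\in C(\bar\Omega)$ then follow at once from Lemma \ref{L1e}, proving the first assertion.

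For the uniform boundedness of $\{p_u:u\in\mathcal U\}$, I would use that $\{y_u:u\in\mathcal U\}$ is bounded in $C(\bar\Omega)$ by Proposition \ref{contstate}; fixing $M$ with $|y_u|_{C(\bar\Omega)}\le M$ for all $u$ and using the local boundedness (uniform in $x$) of $w_y$, $s_y$, $d_y$ on $[-M,M]$ together with $|u|_{L^\infty(\Omega)}\le\max(|b_1|_{L^\infty(\Omega)},|b_2|_{L^\infty(\Omega)})$, the source $h$ above is bounded in $L^\infty(\Omega)$ uniformly in $u$, hence in $L^r(\Omega)$. Since the constant $c$ in Lemma \ref{L1e} is independent of $\alpha$, $y$ and $h$, the estimate $|p_u|_{H^1(\Omega)\cap C(\bar\Omega)}\le c|h|_{L^r(\Omega)}$ yields the claimed uniform bound.

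The main work is the Lipschitz estimate. Writing $y_i:=y_{u_i}$, $p_i:=p_{u_i}$ and subtracting the two costate equations, the key algebraic step is to split $d_y(\cdot,y_2)p_2-d_y(\cdot,y_1)p_1=d_y(\cdot,y_2)(p_2-p_1)+\big(d_y(\cdot,y_2)-d_y(\cdot,y_1)\big)p_1$, so that $\delta p:=p_2-p_1$ solves
\[
\mathcal L\,\delta p+d_y(\cdot,y_2)\,\delta p=\big[w_y(\cdot,y_2)-w_y(\cdot,y_1)\big]+\big[s_y(\cdot,y_2)u_2-s_y(\cdot,y_1)u_1\big]-\big(d_y(\cdot,y_2)-d_y(\cdot,y_1)\big)p_1.
\]
This is again of the form (\ref{linequ}) with the nonnegative coefficient $d_y(\cdot,y_2)$, so Lemma \ref{L1e} applies with a constant independent of the data. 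It then remains to bound the right-hand side in $L^r(\Omega)$: using the local Lipschitz continuity (uniform in $x$) of $w_y$, $s_y$, $d_y$ on $[-M,M]$, the boundedness of $u_1$, and the uniform bound on $|p_1|_{C(\bar\Omega)}$ from the previous paragraph, each bracket is controlled pointwise by a constant times $|y_2-y_1|$ or $|u_2-u_1|$; the only term that is not a difference, $s_y(\cdot,y_2)(u_2-u_1)$, is extracted by the further splitting $s_y(\cdot,y_2)u_2-s_y(\cdot,y_1)u_1=s_y(\cdot,y_2)(u_2-u_1)+(s_y(\cdot,y_2)-s_y(\cdot,y_1))u_1$. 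Hence $|h|_{L^r(\Omega)}\le C\big(|y_2-y_1|_{L^r(\Omega)}+|u_2-u_1|_{L^r(\Omega)}\big)$, and invoking the control-to-state Lipschitz bound $|y_2-y_1|_{C(\bar\Omega)}\le c|u_2-u_1|_{L^r(\Omega)}$ of Proposition \ref{contstate} (together with $C(\bar\Omega)\hookrightarrow L^r(\Omega)$) gives $|h|_{L^r(\Omega)}\le C|u_2-u_1|_{L^r(\Omega)}$. Combined with Lemma \ref{L1e} this produces the desired estimate.

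I expect the main obstacle to be organizational rather than deep: arranging the difference of the two costate equations into the linear template of Lemma \ref{L1e} so that the coefficient multiplying $\delta p$ is the nonnegative function $d_y(\cdot,y_2)$, and then ensuring that every remaining term is either a genuine difference of the data or carries the uniformly bounded factor $p_1$, so that the cross term $\big(d_y(\cdot,y_2)-d_y(\cdot,y_1)\big)p_1$ can be absorbed. The only analytic input beyond Lemma \ref{L1e} and Proposition \ref{contstate} is the local boundedness and local Lipschitz continuity of the superposition operators $y\mapsto w_y(\cdot,y)$, $s_y(\cdot,y)$, $d_y(\cdot,y)$ on bounded ranges, which follow from the differentiability and Lipschitz hypotheses in Assumption \ref{A1}.
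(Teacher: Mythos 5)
Your proposal is correct and follows essentially the same route as the paper's proof: existence, uniqueness, and the uniform bound via Lemma \ref{L1e} applied to $\mathcal L p_u + d_y(\cdot,y_u)p_u = g_y(\cdot,y_u,u)$, and the Lipschitz estimate by subtracting the two costate equations so that $\delta p$ carries the nonnegative coefficient $d_y(\cdot,y_{u_2})$, bounding the right-hand side (which the paper writes compactly as $H_y(\cdot,y_{u_2},p_{u_1},u_2)-H_y(\cdot,y_{u_1},p_{u_1},u_1)$) by $L(|y_{u_2}-y_{u_1}|+|u_2-u_1|)$, and invoking Proposition \ref{contstate}. Your explicit splittings are exactly what the paper's compact Hamiltonian notation encodes, so there is no substantive difference.
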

\begin{proof}
	The existence and uniqueness follows from Lemma \ref{L1e}. 
	Given $u\in\mathcal U$, the function $p_u$ satisfies 
	\begin{align*}
	\mathcal{L}p_{u}+d_{y}(\cdot,y_u)p_u=g_{y}(\cdot,y_u,u).
	\end{align*}
	By $(ii),(iii)$ and $(iv)$ of Assumption \ref{A1}, for each $u\in\mathcal U$, the function $d_y(\cdot,y_{u})$ 
	is nonnegative and belongs to $L^\infty(\Omega)$. By $(ii)$ and $(iii)$ of Assumption \ref{A1}, for each $u\in\mathcal U$ 
	the function $g_{y}(\cdot,y_u,u)$ belongs to $L^\infty(\Omega)$. Furthermore, since by Proposition \ref{contstate}
	the set $\{y_{u}: u\in\mathcal U\}$ is bounded in $C(\bar\Omega)$, there exists $M_1>0$ such that 
	\begin{align*}
	|g_{y}(\cdot,y_u,u)|_{L^\infty(\Omega)}\le M_1
	\end{align*}
	for all $u\in\mathcal U$. By Lemma \ref{L1e}, there exists a positive number $c_1$ such that for all $u\in\mathcal U$
	\begin{align*}
	|p_{u}|_{H^1(\Omega)\cap C(\bar\Omega)}\le c_1|g_{y}(\cdot,y_u,u)|_{L^\infty(\Omega)}.
	\end{align*}
	Thus, $M_2:=c_1M_1$ is a bound for the set $\{p_u:u\in\mathcal U\}$ in $H^1(\Omega)\cap C(\bar\Omega)$. 
	Let $u_1,u_2\in\mathcal U$ and  $r>n/2$. We have then
	\begin{align*}
	\mathcal L(p_{u_2}-p_{u_1})+d_{y}(\cdot,y_{u_2})(p_{u_2}-p_{u_1})=
	H_{y}(\cdot,y_{u_2},p_{u_1},u_2)-H_{y}(\cdot,y_{u_1},p_{u_1},u_1).
	\end{align*}
	By Lemma \ref{L1e}, there exists a positive number $c_2$ (independent of $u_1$ and $u_2$) such that 
	\begin{align*}
	|p_{u_2}-p_{u_1}|_{H^1(\Omega)\cap C(\bar\Omega)}\le c_2|H_{y}(\cdot,y_{u_2},p_{u_1},u_2)-
	H_{y}(\cdot,y_{u_1},p_{u_1},u_1)|_{L^r(\Omega)}.
	\end{align*}
	
	By $(ii)$ of Assumption \ref{A1} and the boundedness of the set $\{p_u:u\in\mathcal U\}$ in $C(\bar\Omega)$, 
	there exists $L>0$ such that 
	\begin{align*}
	|H_{y}(\cdot,y_{u_2},p_{u_1},u_2)-H_{y}(\cdot,y_{u_1},p_{u_1},u_1)|
	\le L\Big(|y_{u_2}-y_{u_1}|+|u_2-u_1|\Big)\quad\text{a.e. in $\Omega$}.
	\end{align*}
	Consequently,
	\begin{align*}
	|p_{u_2}-p_{u_1}|_{H^1(\Omega)\cap C(\bar\Omega)} &\le 
	{  c_2 L \big(| y_{u_1} - y_{u_2} |_{L^r(\Omega)} + | u_1 - u_2 |_{L^r(\Omega)}\big)}
	\\ & \le c_2L \Big((\text{meas}
	\hspace*{0.05cm}\Omega) ^\frac{1}{r}|y_{u_2}-y_{u_1}|_{L^\infty(\Omega)}+|u_2-u_1|_{L^r(\Omega)}\Big).
	\end{align*}
	By Proposition \ref{contstate},  there exists a constant $c_3>0$ (independent of $u_1$ and $u_2$) such that
	\begin{align*}
	|y_{u_2}-y_{u_1}|_{C(\bar\Omega)}\le c_3|u_2-u_1|_{L^r(\Omega)}.
	\end{align*}
	Thus,
	\begin{align*}
	|p_{u_2}-p_{u_1}|_{H^1(\Omega)\cap C(\bar\Omega)}\le c_2L\big(1+c_3 (\text{meas}
	\hspace*{0.05cm}\Omega)^\frac{1}{r} \big)|u_2-u_1|_{L^r(\Omega)}.
	\end{align*}
	The estimate follows defining $c:=c_2L\big(1+c_3 (\text{meas}\hspace*{0.05cm}\Omega)^\frac{1}{r} \big)$.
\end{proof}

We call the function $\mathcal S:\mathcal U\to H^1(\Omega)\cap C(\bar\Omega)$ given by $\mathcal S(u):= p_u$ 
the control-to-adjoint mapping. The following proposition gives us another useful estimate; it can be easily proved employing Lemma \ref{L2e} and the argument in the  proof of \cite[Theorem 4.16]{TroeltzschPde}. 

\begin{prpstn}\label{contstatel1}
	There exists $c>0$ such that 
	\begin{align*}
	|y_{u_2}-y_{u_1}|_{L^2(\Omega)}+|p_{u_2}-p_{u_1}|_{L^2(\Omega)}\le c|u_2-u_1|_{L^1(\Omega)}
	\end{align*}
	for all $u_1,u_2\in\mathcal U$. 
\end{prpstn}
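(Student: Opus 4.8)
The plan is to handle the two summands separately, in each case rewriting the difference of the corresponding equations (state, then adjoint) as a single linear problem $\mathcal L z+\alpha(\cdot)z=h$ with a \emph{nonnegative} bounded coefficient $\alpha$, and then applying Lemma \ref{L2e}. This last lemma is the crucial tool: it is exactly the statement that $L^1$-data on the right-hand side controls the $L^2$-norm of the solution, which is what produces the $L^1\to L^2$ gain in the claim. Note that the Lipschitz estimates of Propositions \ref{contstate} and \ref{concost} are too weak here, as they only give control in $L^r(\Omega)$ for $r>n/2$; the whole point is to replace that by $L^1(\Omega)$. The nonnegativity of $\alpha$ will come from item $(iv)$ of Assumption \ref{A1}, and its boundedness from $(ii)$--$(iii)$ together with the uniform state and costate bounds in Propositions \ref{contstate} and \ref{concost}.

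First I would treat the state difference. Subtracting the equations $\mathcal L y_{u_i}+d(\cdot,y_{u_i})=\beta(\cdot)u_i$ and writing the increment of $d$ in integral mean-value form, I set $\alpha(x):=\int_0^1 d_y\big(x,y_{u_1}(x)+t(y_{u_2}(x)-y_{u_1}(x))\big)\,dt$, which is nonnegative by $(iv)$ and bounded since the states lie in a fixed bounded subset of $C(\bar\Omega)$. This gives
\[
\mathcal L(y_{u_2}-y_{u_1})+\alpha(\cdot)(y_{u_2}-y_{u_1})=\beta(\cdot)(u_2-u_1),
\]
so Lemma \ref{L2e} and the boundedness of $\beta$ yield $|y_{u_2}-y_{u_1}|_{L^2(\Omega)}\le c\,|\beta|_{L^\infty(\Omega)}\,|u_2-u_1|_{L^1(\Omega)}$. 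By H\"older's inequality this also bounds $|y_{u_2}-y_{u_1}|_{L^1(\Omega)}$ by a multiple of $|u_2-u_1|_{L^1(\Omega)}$, a fact I shall need below.

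For the adjoint difference I would mimic the computation in the proof of Proposition \ref{concost}. Subtracting the costate equations $\mathcal L p_{u_i}+d_y(\cdot,y_{u_i})p_{u_i}=g_y(\cdot,y_{u_i},u_i)$ and adding and subtracting $d_y(\cdot,y_{u_2})p_{u_1}$, one arrives at
\[
\mathcal L(p_{u_2}-p_{u_1})+d_y(\cdot,y_{u_2})(p_{u_2}-p_{u_1})=H_y(\cdot,y_{u_2},p_{u_1},u_2)-H_y(\cdot,y_{u_1},p_{u_1},u_1),
\]
where now the zeroth-order coefficient $d_y(\cdot,y_{u_2})$ is nonnegative and bounded. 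Applying Lemma \ref{L2e} and then the local Lipschitz bound on $H_y$ already established in the proof of Proposition \ref{concost} gives $|p_{u_2}-p_{u_1}|_{L^2(\Omega)}\le cL\big(|y_{u_2}-y_{u_1}|_{L^1(\Omega)}+|u_2-u_1|_{L^1(\Omega)}\big)$. Inserting the $L^1$-bound on the state difference from the previous step collapses the right-hand side to a multiple of $|u_2-u_1|_{L^1(\Omega)}$, and summing the two displayed estimates finishes the proof.

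As for difficulty, there is no real obstacle once Lemma \ref{L2e} is in hand; the only points requiring care are arranging each difference equation so that its zeroth-order coefficient is nonnegative, which is automatic here because $d_y\ge 0$, and making sure every term on the right-hand sides is measured in $L^1(\Omega)$, so that it is the $L^1$-norm of the control difference, rather than an $L^r$-norm, that ultimately appears. Chaining the costate estimate through the already-proved state estimate is exactly what keeps the final bound at the $L^1$ level.
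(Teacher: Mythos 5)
Your proposal is correct and follows essentially the same route the paper intends: the paper's proof reduces both differences to linear equations $\mathcal L z+\alpha(\cdot)z=h$ with nonnegative bounded coefficient (for the state, via the mean-value form of the increment of $d$; for the adjoint, via the same algebraic rearrangement used in Proposition \ref{concost}) and then applies Lemma \ref{L2e} to gain the $L^1\to L^2$ estimate, chaining the adjoint bound through the state bound exactly as you do. The only cosmetic difference is that the paper writes the coefficient $\alpha$ as a difference quotient of $d$ rather than as your integral mean-value expression; the two are equivalent.
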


We close this subsection with the following result.

\begin{prpstn}\label{weakconvergence}
	Let $\{u_m\}_{m=1}^\infty$ be a sequence in $\mathcal U$ and $u\in \mathcal U$.  If $u_m\rightharpoonup u$ 
	weakly in $L^2(\Omega)$, then $y_{u_m}\to y_{u}$ and $p_{u_m}\to p_{u}$ in $C(\bar\Omega)$.
\end{prpstn}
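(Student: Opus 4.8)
The plan is to prove the two convergence statements separately, first for the states $y_{u_m}\to y_u$ and then for the costates $p_{u_m}\to p_u$, reducing each to the linear weak-convergence result established in Lemma \ref{weakconv}. The essential difficulty is that both the state equation $\mathcal{L}y_{u}=\beta(\cdot)u-d(\cdot,y_u)$ and the adjoint equation are nonlinear in the state, so one cannot apply Lemma \ref{weakconv} directly; some compactness argument is needed to upgrade weak convergence of $u_m$ to strong convergence of the nonlinear right-hand sides.

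For the states, I would argue as follows. Since $\{u_m\}\subset\mathcal U$ is bounded in $L^2(\Omega)$ and $u_m\rightharpoonup u$, Proposition \ref{contstate} guarantees that $\{y_{u_m}\}$ is bounded in $H^1(\Omega)\cap C(\bar\Omega)$. By the compact embedding $H^1(\Omega)\hookrightarrow L^2(\Omega)$, along any subsequence we may extract a further subsequence with $y_{u_m}\to \tilde y$ strongly in $L^2(\Omega)$ for some $\tilde y$. Writing the state equation in the form $\mathcal{L}y_{u_m}=\beta(\cdot)u_m-d(\cdot,y_{u_m})$, I want to pass to the limit: the term $\beta(\cdot)u_m$ converges weakly in $L^2(\Omega)$ to $\beta(\cdot)u$ (as $\beta\in L^\infty(\Omega)$ by Assumption \ref{A1}(iii)), and the term $d(\cdot,y_{u_m})$ converges strongly in $L^2(\Omega)$ to $d(\cdot,\tilde y)$ by the local Lipschitz continuity of $d$ from Assumption \ref{A1}(ii) together with the uniform $C(\bar\Omega)$-bound and the strong $L^2$-convergence of the states. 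Hence the right-hand sides converge weakly in $L^2(\Omega)$ to $\beta(\cdot)u-d(\cdot,\tilde y)$. Applying Lemma \ref{weakconv} (with the coefficient $\alpha\equiv 0$, or more carefully by linearizing $d$ around $\tilde y$ to produce a nonnegative $\alpha\in L^\infty(\Omega)$ as in the proof of Proposition \ref{contstatel1}) yields $y_{u_m}\to y$ in $C(\bar\Omega)$, where $y$ solves $\mathcal{L}y=\beta(\cdot)u-d(\cdot,y)$. By uniqueness of the state (Proposition \ref{contstate}), $y=y_u$ and $\tilde y=y_u$. Since every subsequence has a further subsequence converging to the same limit $y_u$ in $C(\bar\Omega)$, the whole sequence converges: $y_{u_m}\to y_u$ in $C(\bar\Omega)$.

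For the costates, the structure is similar but now rests on the state convergence just proved. The adjoint equation reads $\mathcal{L}p_{u_m}+d_y(\cdot,y_{u_m})p_{u_m}=g_y(\cdot,y_{u_m},u_m)$, where $g_y(\cdot,y,u)=w_y(\cdot,y)+s_y(\cdot,y)u$. By Proposition \ref{concost}, $\{p_{u_m}\}$ is bounded in $H^1(\Omega)\cap C(\bar\Omega)$, so again passing to a subsequence we may assume $p_{u_m}\to\tilde p$ strongly in $L^2(\Omega)$ and in $C(\bar\Omega)$ along it. The coefficient $d_y(\cdot,y_{u_m})$ converges in $L^\infty$-sense (or at least strongly enough in the relevant $L^r$) to $d_y(\cdot,y_u)$ since $y_{u_m}\to y_u$ uniformly and $d_y(x,\cdot)$ is continuous; and the right-hand side $g_y(\cdot,y_{u_m},u_m)=w_y(\cdot,y_{u_m})+s_y(\cdot,y_{u_m})u_m$ converges weakly in $L^2(\Omega)$ to $w_y(\cdot,y_u)+s_y(\cdot,y_u)u=g_y(\cdot,y_u,u)$, because $w_y(\cdot,y_{u_m})\to w_y(\cdot,y_u)$ strongly and $s_y(\cdot,y_{u_m})u_m\rightharpoonup s_y(\cdot,y_u)u$ weakly (splitting $s_y(\cdot,y_{u_m})u_m=[s_y(\cdot,y_{u_m})-s_y(\cdot,y_u)]u_m+s_y(\cdot,y_u)u_m$ and using that the first bracket tends to zero uniformly while $u_m\rightharpoonup u$). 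Incorporating the nonnegative coefficient $\alpha=d_y(\cdot,y_u)\in L^\infty(\Omega)$ from Assumption \ref{A1}(iv) and invoking Lemma \ref{weakconv} gives $p_{u_m}\to p$ in $C(\bar\Omega)$, where $p$ solves the adjoint equation with data at $y_u,u$; uniqueness forces $p=p_u$ and $\tilde p=p_u$. The subsequence argument then promotes this to convergence of the full sequence, completing the proof.

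The main obstacle I anticipate is handling the bilinear term $s_y(\cdot,y_{u_m})u_m$ (and analogously $d_y(\cdot,y_{u_m})p_{u_m}$ in the adjoint coefficient), where a weakly convergent factor $u_m$ multiplies a state-dependent factor. Weak times weak does not pass to the limit in general, so the key is to exploit that $y_{u_m}\to y_u$ strongly (indeed uniformly) to convert the product into a strongly convergent factor times a weakly convergent factor via the splitting above; this is precisely where the state convergence from the first part, combined with the local Lipschitz regularity of Assumption \ref{A1}(ii), does the essential work.
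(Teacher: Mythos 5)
Your proposal is correct and follows essentially the same route as the paper: extract an $L^2$-strongly convergent subsequence via the compact embedding $H^1(\Omega)\hookrightarrow L^2(\Omega)$, pass the nonlinear right-hand sides to weak $L^2$ limits, invoke Lemma \ref{weakconv}, identify the limit through uniqueness of the state/costate, and conclude by the subsequence principle --- the paper simply writes out the costate case (keeping all of $H_y$ on the right with $\alpha\equiv 0$) and declares the state case analogous, while you freeze the coefficient $d_y(\cdot,y_u)$ on the left, a harmless variant. One small caveat: boundedness in $H^1(\Omega)\cap C(\bar\Omega)$ does \emph{not} justify extracting a subsequence of $\{p_{u_m}\}$ converging in $C(\bar\Omega)$ (only in $L^2(\Omega)$), but this claim is never actually used in your argument, since the uniform convergence is delivered by Lemma \ref{weakconv}.
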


\begin{proof}
	We prove only the convergence $p_{u_m}\to p_u$ in $C(\bar\Omega)$, the convergence 
	$y_{u_m}\to y_u$ in $C(\bar\Omega)$ is analogous.
	Let $\{p_{u_{m_k}}\}_{k=1}^\infty$ be an arbitrary subsequence of   $\{p_{u_m}\}_{m=1}^\infty$. 
	By the compact embedding $H^1(\Omega)\hookrightarrow L^2(\Omega)$, there exists a subsequence
	of $\{p_{u_{m_k}}\}_{k=1}^\infty$, denoted in the same way, and $p\in L^2(\Omega)$ such that 
	$p_{u_{m_k}}\to p$ in $L^2(\Omega)$. Since $y_{u_{m_k}}\to y_{u}$ in $C(\bar\Omega)$, one can deduce that
	\begin{align*}
	H_{y}(\cdot,y_{u_{m_k}}, p_{u_{m_k}}, {u_{m_k}})\rightharpoonup H_{y}(\cdot,y_u,p,u) \quad\text{weakly in $L^2(\Omega)$.}
	\end{align*}
	By Lemma \ref{weakconv}, we have $p_{u_{m_{k}}}\to p_u$ in $C(\bar\Omega)$. The result follows, since every 
	subsequence of $\{p_{u_{m}}\}_{m=1}^\infty$ has  a {further} subsequence that converges to $p_{u}$ in $C(\bar\Omega)$.
\end{proof}


\section{Differentiability of the mappings involved}\label{Section diff}

In this section, we prove some preliminary results concerning the differentiability of the control-to-state mapping, 
the control-to-adjoint mapping and the switching mapping (to be defined later).  Some of these properties 
are well known for the control-to-state mapping; see, e.g., 
\cite{Casascone,Casasbang,Casanum,Wachelliptic,TroeltzschPde}. Nevertheless, we require more specific 
estimates than the ones in the literature. The differentiability of the control-to-adjoint mapping and the switching mapping 
has not been studied before in the literature {on} elliptic control-constrained problems, therefore we devote this section 
to obtain appropriate estimates needed in the study of stability in the next section.

\subsection{The state and adjoint mappings}

We begin this subsection recalling the definition of directional derivative, see 
\cite[pp.2-4]{Flemming} or \cite[p.171]{Lue}. Let $Y$ be a normed space and ${\mathcal F}: {\mathcal U} \to Y$ a mapping. Given $u \in {\mathcal U}$ and $v\in\mathcal U-u$, if the limit
\begin{align*}
d {\mathcal F}(u;v) := \lim_{\varepsilon \to 0^+} \frac{{\mathcal F}(u + \varepsilon v) - {\mathcal F}(u)}{\varepsilon}
\end{align*}
exists in $Y$, we say that $\mathcal F(u;v)$ is the (G\^ateaux) differential of $\mathcal F$ at $u$ in the direction $v$. Note that by convexity of $\mathcal U$, $u + \varepsilon v$  belongs to $\mathcal U$ for every $u\in\mathcal U$, $v\in\mathcal U-u$ and  $\varepsilon \in [0,1]$. We will restrict ourselves to this simple definition of directional derivative, as further differentiability properties are not needed in our analysis of stability.

Recall that $\bar u\in \mathcal U$ is a fixed solution of problem (\ref{cost})--(\ref{system}). As it is well-known, 
the  differential of the control-to-state mapping at $\bar u$ is related to the linearization of the system equation 
around $\bar u$. Bearing this in mind, given $v\in L^2(\Omega)$, we denote by $z_{v}$ 
the unique\footnote{ The uniqueness follows from Lemma \ref{L1e}, and the fact that equation (\ref{linsta}) 
	can be rewritten as $$\mathcal Lz_{v}+d_{y}(\cdot,y_{\bar u})z_{v}=\beta(\cdot)v.$$} solution of the equation
\begin{align}\label{linsta}
\mathcal L z_{v}=f_y(\cdot, y_{\bar u},\bar u)z_{v}+f_u(\cdot,y_{\bar u},\bar u)v.
\end{align}

The proof of the following estimate can be found in the standard literature, see the proof of 
\cite[Theorem 4.17]{TroeltzschPde} for the case of a Neumann boundary problem (the proof is the same for
Robin or Dirichlet boundary). It can also be deduced by the same arguments given in the proof 
of Proposition \ref{Ecr}.

\begin{prpstn}\label{Esr}
	For each $r>n/2$ there exists $c>0$ such that
	\begin{align*}
	|y_u-y_{\bar u}-z_{u-\bar u}|_{H^1(\Omega)\cap C(\bar\Omega)}\le c|u-\bar u|_{L^r(\Omega)}^2\quad\quad 
	\forall u\in\mathcal U.
	\end{align*}
\end{prpstn}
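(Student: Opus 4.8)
The plan is to introduce the abbreviations $v := u - \bar u$ and $\eta := y_u - y_{\bar u} - z_{v}$, to derive a single linear elliptic equation for $\eta$ whose right-hand side is a quadratic Taylor remainder, and then to read off the estimate from Lemma \ref{L1e}. First I would subtract the state equation for $y_{\bar u}$ from that for $y_u$; recalling $f(x,y,u)=\beta(x)u-d(x,y)$, this gives
\[
\mathcal L(y_u-y_{\bar u})=\beta(\cdot)(u-\bar u)-\big(d(\cdot,y_u)-d(\cdot,y_{\bar u})\big).
\]
Next I would subtract the linearized equation (\ref{linsta}), written in the footnote form $\mathcal L z_{v}+d_y(\cdot,y_{\bar u})z_{v}=\beta(\cdot)v$, and expand $d(\cdot,y_u)-d(\cdot,y_{\bar u})$ by Taylor's formula in the second variable about $y_{\bar u}$. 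The aim is that the two first-order contributions involving $d_y(\cdot,y_{\bar u})$ combine into $-d_y(\cdot,y_{\bar u})\eta$, leaving
\[
\mathcal L\eta+d_y(\cdot,y_{\bar u})\eta=-R,\qquad R(x):=\Big(\int_0^1(1-t)\,d_{yy}\big(x,y_{\bar u}(x)+t(y_u(x)-y_{\bar u}(x))\big)\,dt\Big)\big(y_u(x)-y_{\bar u}(x)\big)^2.
\]

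Then I would estimate $R$ in $L^r(\Omega)$. By Proposition \ref{contstate} the family $\{y_u:u\in\mathcal U\}$ lies in a fixed ball of $C(\bar\Omega)$, so every intermediate argument $y_{\bar u}+t(y_u-y_{\bar u})$ stays in a fixed compact interval $[-M,M]$; by item $(ii)$ of Assumption \ref{A1} (together with the boundedness supplied by $(iii)$) the second derivative $d_{yy}$ is bounded, say by $K$, on $\Omega\times[-M,M]$. Hence $|R(x)|\le \tfrac{K}{2}|y_u(x)-y_{\bar u}(x)|^2$, and estimating one factor $y_u-y_{\bar u}$ in $C(\bar\Omega)$ yields
\[
|R|_{L^r(\Omega)}\le \tfrac{K}{2}(\text{meas}\,\Omega)^{1/r}\,|y_u-y_{\bar u}|_{C(\bar\Omega)}^2\le \tfrac{K}{2}(\text{meas}\,\Omega)^{1/r}\,c_1^2\,|u-\bar u|_{L^r(\Omega)}^2,
\]
where the last step is the Lipschitz estimate of Proposition \ref{contstate}. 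Finally, since $d_y(\cdot,y_{\bar u})$ is nonnegative and belongs to $L^\infty(\Omega)$, Lemma \ref{L1e} applies with $\alpha=d_y(\cdot,y_{\bar u})$ and gives a constant $c_0$, \emph{independent of $\alpha$}, with $|\eta|_{H^1(\Omega)\cap C(\bar\Omega)}\le c_0|R|_{L^r(\Omega)}$; chaining the two inequalities yields the claim with $c=c_0\tfrac{K}{2}(\text{meas}\,\Omega)^{1/r}c_1^2$.

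The step I expect to be the main obstacle is the algebraic bookkeeping in the first paragraph: one must check that the $d_y(\cdot,y_{\bar u})$ term arising from the linearization cancels exactly against the first-order Taylor term of $d$, so that the residual equation for $\eta$ retains the elliptic-plus-nonnegative-potential structure required by Lemma \ref{L1e}. Everything else is routine: the confinement of the intermediate points to a compact interval, the resulting quadratic bound on the remainder, and the a priori estimate of Lemma \ref{L1e}. The one point demanding care is that the constant in Lemma \ref{L1e} is uniform over all nonnegative $\alpha\in L^\infty(\Omega)$, which is precisely what makes the final constant $c$ independent of $u$.
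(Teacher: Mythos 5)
Your proof is correct and is essentially the argument the paper has in mind: the paper refers to the standard literature (Tr\"oltzsch, Theorem 4.17) and to the argument of Proposition \ref{Ecr}, which is precisely your scheme — subtract the equations, Taylor-expand $d$ around $y_{\bar u}$ so that the first-order terms cancel into $-d_y(\cdot,y_{\bar u})\eta$, bound the quadratic remainder via Proposition \ref{contstate}, and conclude with the $\alpha$-uniform estimate of Lemma \ref{L1e}. Your treatment of the remainder coefficient (boundedness of $d_{yy}$ on $\Omega\times[-M,M]$ from items $(ii)$--$(iii)$ of Assumption \ref{A1}) matches exactly the level of justification the paper itself uses in the proof of Proposition \ref{Ecr}, so there is no gap relative to the paper.
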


One of the first things that can be deduced from Proposition \ref{Esr} is the differentiability of the control-to-state 
mapping $\mathcal G$. Given $v\in L^2(\Omega)$ satisfying $\bar u+v\in\mathcal U$, the  
differential of the control-to-state mapping $\mathcal G$ at $\bar u$ in the direction $v$ exists and  
is given by $d{\mathcal G}(\bar u;v)=z_{v}$. 
 For further differentiability properties of the control-to-state mapping, we refer the reader to \cite[Theorem 2.12]{Casasnonmonotone}. 

In order to study the  differential of the control-to-adjoint 
mapping{ we introduce the  following notations}. 
Given $v\in L^2(\Omega)$, we denote by $q_{v}$ the unique\footnote{ The uniqueness follows from Lemma \ref{L1e}, 
	and the fact that equation (\ref{lincos}) can be rewritten as 
	$$
	\mathcal Lq_{v}+d_{y}(\cdot,y_{\bar u})q_{v}=H_{yy}(\cdot,y_{\bar u}, p_{\bar u},\bar u)z_v+
	H_{yu}(\cdot,y_{\bar u}, p_{\bar u},\bar u)v.
	$$} 
solution of the equation
\begin{align}\label{lincos}
\mathcal Lq_{v}=H_{yy}(\cdot,y_{\bar u}, p_{\bar u},\bar u)z_v+H_{yp}(\cdot,y_{\bar u}, p_{\bar u},\bar u)q_{v}+
H_{yu}(\cdot,y_{\bar u}, p_{\bar u},\bar u)v.
\end{align}

The following estimate is concerned with the differentiability of the control-to-adjoint mapping. 
To the best of our knowledge, this result does not appear in the literature; therefore we present its proof, although it is standard. 
\begin{prpstn}\label{Ecr}
	For each $r>n/2$ there exists $c>0$ such that
	\begin{align*}
	|p_u-p_{\bar u}-q_{u-\bar u}|_{H^1(\Omega)\cap C(\bar\Omega)}\le c|u-\bar u|_{L^r(\Omega)}^2\quad
	\quad \forall u\in\mathcal U.
	\end{align*}
\end{prpstn}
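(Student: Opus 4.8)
The plan is to follow the scheme of Proposition \ref{Esr}, transferred to the adjoint equation: derive a single linear elliptic equation for the remainder $\zeta_u := p_u - p_{\bar u} - q_{u-\bar u}$ and estimate its right-hand side by means of Lemma \ref{L1e}. Throughout write $v := u - \bar u$, and recall that $p_u$ and $p_{\bar u}$ solve $\mathcal L p_u + d_y(\cdot, y_u) p_u = g_y(\cdot, y_u, u)$ and $\mathcal L p_{\bar u} + d_y(\cdot, y_{\bar u}) p_{\bar u} = g_y(\cdot, y_{\bar u}, \bar u)$, while $q_{v}$ solves the rewritten form of (\ref{lincos}) from the footnote,
\[
\mathcal L q_{v} + d_y(\cdot, y_{\bar u}) q_{v} = H_{yy}(\cdot, y_{\bar u}, p_{\bar u}, \bar u)\, z_{v} + H_{yu}(\cdot, y_{\bar u}, p_{\bar u}, \bar u)\, v.
\]
Applying the operator $\mathcal L + d_y(\cdot, y_{\bar u})$ to $\zeta_u$ and substituting these relations — noting that the first equation carries $d_y(\cdot, y_u)$ rather than $d_y(\cdot, y_{\bar u})$, which produces the extra term $-[d_y(\cdot, y_u) - d_y(\cdot, y_{\bar u})]\,p_u$ — yields $\mathcal L \zeta_u + d_y(\cdot, y_{\bar u})\zeta_u = \Phi_u$, where $\Phi_u$ is the sum of $g_y(\cdot, y_u, u) - g_y(\cdot, y_{\bar u}, \bar u)$, that extra $d_y$-term, and $-H_{yy}(\cdot, y_{\bar u}, p_{\bar u}, \bar u) z_{v} - H_{yu}(\cdot, y_{\bar u}, p_{\bar u}, \bar u) v$.

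Next I would Taylor expand $g_y$ and $d_y$ about $(\cdot, y_{\bar u}, \bar u)$, with $\delta y := y_u - y_{\bar u}$, writing $g_y(\cdot, y_u, u) - g_y(\cdot, y_{\bar u}, \bar u) = g_{yy}(\cdot, y_{\bar u}, \bar u)\delta y + g_{yu}(\cdot, y_{\bar u}, \bar u) v + \rho_1$ and $d_y(\cdot, y_u) - d_y(\cdot, y_{\bar u}) = d_{yy}(\cdot, y_{\bar u})\delta y + \rho_2$. Using the identities $H_{yy}(\cdot, y_{\bar u}, p_{\bar u}, \bar u) = g_{yy}(\cdot, y_{\bar u}, \bar u) - p_{\bar u}\, d_{yy}(\cdot, y_{\bar u})$ and $H_{yu}(\cdot, y_{\bar u}, p_{\bar u}, \bar u) = g_{yu}(\cdot, y_{\bar u}, \bar u)$, the linear-in-$v$ terms cancel exactly, and after inserting $\delta y = z_v + (\delta y - z_v)$ and $p_u = p_{\bar u} + (p_u - p_{\bar u})$ the leading contributions organize into
\begin{align*}
\Phi_u &= g_{yy}(\cdot, y_{\bar u}, \bar u)(\delta y - z_{v}) + \rho_1 - \rho_2\, p_u \\
&\quad + d_{yy}(\cdot, y_{\bar u})\big[ -p_{\bar u}(\delta y - z_{v}) - (p_u - p_{\bar u}) z_{v} - (p_u - p_{\bar u})(\delta y - z_{v}) \big].
\end{align*}

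Then I would bound $|\Phi_u|_{L^r(\Omega)} \le c|v|_{L^r(\Omega)}^2$ term by term. The coefficients $g_{yy}(\cdot, y_{\bar u}, \bar u)$ and $d_{yy}(\cdot, y_{\bar u})$ lie in $L^\infty(\Omega)$ by Assumption \ref{A1}$(ii)$--$(iii)$ together with the uniform bound on $y_{\bar u}$ from Proposition \ref{contstate}, and $p_u, p_{\bar u}$ are bounded in $C(\bar\Omega)$ by Proposition \ref{concost}. The factor $\delta y - z_{v}$ is exactly the state remainder, so $|\delta y - z_{v}|_{C(\bar\Omega)} \le c|v|_{L^r(\Omega)}^2$ by Proposition \ref{Esr}, whereas $|p_u - p_{\bar u}|_{C(\bar\Omega)} \le c|v|_{L^r(\Omega)}$ (Proposition \ref{concost}) and $|z_{v}|_{C(\bar\Omega)} \le c|v|_{L^r(\Omega)}$ (Lemma \ref{L1e} applied to (\ref{linsta})). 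Carrying the quadratically small factor always in the $C(\bar\Omega)$-norm and applying H\"older's inequality, every term is $O(|v|_{L^r(\Omega)}^2)$, the cubic term being absorbed because $|v|_{L^r(\Omega)}$ is bounded on $\mathcal U$. Since all terms in fact lie in $L^\infty(\Omega) \subset L^2(\Omega) \cap L^r(\Omega)$, Lemma \ref{L1e} finally gives $|\zeta_u|_{H^1(\Omega)\cap C(\bar\Omega)} \le c|\Phi_u|_{L^r(\Omega)} \le c|u-\bar u|_{L^r(\Omega)}^2$, as claimed.

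The main obstacle is the control of the Taylor remainders $\rho_1, \rho_2$ in the $L^r(\Omega)$-norm. A crude bound such as $|\rho_1| \le C(|\delta y|^2 + |v|^2)$ would produce a term $|v|_{L^{2r}(\Omega)}^2$, which is \emph{not} dominated by $|v|_{L^r(\Omega)}^2$. The resolution I would exploit is that $f$ and $g$ are affine in $u$, so that $g_y(\cdot, y, u) = w_y(\cdot, y) + s_y(\cdot, y) u$ carries no genuine quadratic dependence on $u$: the mixed remainder appears only as $s_{yy}(\cdot, y_{\bar u})\,\delta y\, v$ together with terms bounded by $C|\delta y|^2(1 + |u|)$. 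Since $\delta y \in C(\bar\Omega)$ and $u \in L^\infty(\Omega)$ is bounded on $\mathcal U$, H\"older's inequality gives $|\rho_1|_{L^r(\Omega)} \le c|v|_{L^r(\Omega)}^2$, and likewise $|\rho_2|_{L^r(\Omega)} \le c|v|_{L^r(\Omega)}^2$. Verifying this cancellation of the dangerous $|v|^2$-contributions is the only delicate point; the remainder of the argument is routine bookkeeping modeled on Proposition \ref{Esr}.
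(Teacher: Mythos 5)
Your proposal is correct, and it rests on the same pillars as the paper's proof: the remainder solves a linear elliptic equation whose right-hand side is estimated in $L^r(\Omega)$ via Lemma \ref{L1e}, with Proposition \ref{Esr} absorbing the state-linearization error and the affine structure of the problem in $u$ preventing the appearance of an untameable $|v|_{L^{2r}(\Omega)}^2$ term. The organization, however, is genuinely different. The paper introduces an intermediate function $\tilde q_{u-\bar u}$, defined as the solution of the linearized adjoint equation driven by the \emph{true} state increment $y_u-y_{\bar u}$ rather than by $z_v$, splits the remainder as $(p_u-p_{\bar u}-\tilde q_v)+(\tilde q_v-q_v)$, and applies Lemma \ref{L1e} twice: the first piece carries the Taylor remainder of $H_y(\cdot,y,p,u)$ expanded jointly in $(y,p,u)$, which by affineness of $H$ in $p$ and $u$ automatically has the form $\alpha_1(y_u-y_{\bar u})^2+\alpha_2(y_u-y_{\bar u})(p_u-p_{\bar u})+\alpha_3(y_u-y_{\bar u})v$ with $\alpha_i\in L^\infty(\Omega)$, while the second piece has right-hand side $H_{yy}(\psi_{\bar u})(y_u-y_{\bar u}-z_v)+H_{yp}(\psi_{\bar u})(\tilde q_v-q_v)$ and is dispatched directly by Proposition \ref{Esr}. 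You instead derive a single equation for the full remainder $\zeta_u$ and verify the cancellation of the linear terms by hand, using the identities $H_{yy}=g_{yy}-p_{\bar u}d_{yy}$ and $H_{yu}=g_{yu}$. What the paper's split buys is that no algebraic cancellation ever needs to be checked — the choice of $\tilde q_v$ makes it automatic, and each right-hand side is structurally simple; what your route buys is the elimination of the auxiliary function, at the price of the explicit bookkeeping in your displayed formula for $\Phi_u$ and the term-by-term verification. Your closing discussion of $\rho_1$, $\rho_2$ — that $g_y=w_y+s_yu$ is affine in $u$, so the only mixed remainder is $s_{yy}(\cdot,y_{\bar u})\,\delta y\,v$ plus terms of order $|\delta y|^2(1+|u|)$ — is precisely the hand-made version of the structural fact the paper exploits implicitly when asserting its remainder contains no $v^2$ contribution, and it is the one point where a careless blind attempt would have failed; you handled it correctly.
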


\begin{proof}
	Given $u\in\mathcal U$, 
	we define $\psi_{u}:\Omega\to\mathbb R^4$ by $\psi_{u}(x):=(x,y_{u}(x),p_{u}(x),u(x))$. 
	For each $u\in \mathcal U$, we denote by $\tilde q_{u-\bar u}$ the unique solution of the equation
	\begin{align*}
	\mathcal L\tilde q_{u-\bar u}=H_{yy}(\psi_{\bar u})(y_{u}-y_{\bar u})+H_{yp}(\psi_{\bar u})\tilde q_{u-\bar u}+
	H_{yu}(\psi_{\bar u})(u-\bar u).
	\end{align*}
	Let $u\in\mathcal U$ and $r>n/2$ be arbitrary. Using the Taylor Theorem (integral form of the remainder) and $(ii)$-$(iii)$ of Assumption \ref{A1}, 
	one can find $\alpha_1,\alpha_2,\alpha_3\in L^\infty(\Omega)$ such that
	\begin{align*}
	H_{y}(\psi_{u})=&H_{y}(\psi_{\bar u})+H_{yy}(\psi_{\bar u})(y_{u}-y_{\bar u})+H_{yp}(\psi_{\bar u})(p_{u}-p_{\bar u})+H_{yu}(\psi_{\bar u})v\\
	&+\alpha_1(\cdot)(y_{u}-y_{\bar u})^2+\alpha_2(\cdot)(y_{u}-y_{\bar u})(p_{u}-p_{\bar u})+\alpha_3(\cdot)(y_{u}-y_{\bar u})v,
	\end{align*}
	where $v=u-\bar u$. Hence
	\begin{align*}
	\mathcal L(p_u-p_{\bar u}-\tilde q_{v})=H_{yp}(\psi_{\bar u})(p_{u}-p_{\bar u}-\tilde q_{v})+
	\Big[ \alpha_1(\cdot)(y_{u}-y_{\bar u})+\alpha_2(\cdot)(p_{u}-p_{\bar u})+\alpha_3(\cdot)v\Big](y_{u}-y_{\bar u}).
	\end{align*}
	By Lemma \ref{L1e}, Proposition \ref{contstate} and Proposition \ref{concost}, there exists $c_1>0$ such that
	\begin{align*}
	|p_u-p_{\bar u}-\tilde q_{v}|_{H^1(\Omega)\cap C(\bar\Omega)}\le c_1|v|_{L^r(\Omega)}^2.
	\end{align*}
	Now, 
	\begin{align*}
	\mathcal L(\tilde q_{v}- q_{v})=H_{yy}(\psi_{\bar u})(y_{u}-y_{\bar u}-z_{v})+H_{yp}(\psi_{\bar u})(\tilde q_{v}-q_{v}).
	\end{align*}
	By Lemma \ref{L1e} and Proposition \ref{Esr}, there exists $c_2>0$ such that 
	\begin{align*}
	|\tilde q_{v}- q_{v}|_{H^1(\Omega)\cap C(\bar\Omega)}\le c_2|v|_{L^r(\Omega)}^2.
	\end{align*}
	Finally, by the triangle inequality
	\begin{align*}
	|p_u-p_{\bar u}- q_{v}|_{H^1(\Omega)\cap C(\bar\Omega)}\le|p_u-p_{\bar u}-
	\tilde q_{v}|_{H^1(\Omega)\cap C(\bar\Omega)}+ |\tilde q_{v}- q_{v}|_{H^1(\Omega)\cap C(\bar\Omega)}.
	\end{align*}
	The result follows taking $c:=c_1+c_2$.
\end{proof}

Given $v\in L^\infty(\Omega)$ satisfying $\bar u+v\in\mathcal U$, the  differential of the control-to-adjoint 
mapping $\mathcal S$ at $\bar u$ in the direction $v$ exists and  is given by $d{\mathcal S}(\bar u;v)=q_{v}.$  

We now state further properties concerning the mappings $v\to z_v$ and $v\to q_v$.

\begin{prpstn}\label{furtherprop0}
	The following statements hold.
	\begin{itemize}
		\item[(i)] For each $r>n/2$ there exists a positive number $c$ such that 
		\begin{align*}
		|z_{v}|_{H^1(\Omega)\cap C(\bar\Omega)}+|q_{v}|_{H^1(\Omega)\cap C(\bar\Omega)}\le c|v|_{L^r(\Omega)}\quad\forall v\in L^2(\Omega)\cap L^r(\Omega).
		\end{align*}
		\item[(ii)] There exists a positive number $c$ such that 
		\begin{align*}
		|z_{v}|_{L^2(\Omega)}+|q_{v}|_{L^2{(\Omega)}}\le c|v|_{L^1(\Omega)} \quad \forall v\in L^2(\Omega).
		\end{align*}
		\item[(iii)] Let $\{v_k\}_{k=1}^\infty$ be a sequence in $L^2(\Omega)$ and $v\in L^2(\Omega)$. If $v_k\rightharpoonup v$ weakly in $L^2(\Omega)$, then $z_{v_{k}}\to z_{v}$ and $q_{v_{k}}\to q_{v}$ in $C(\bar\Omega)$. 
	\end{itemize}
\end{prpstn}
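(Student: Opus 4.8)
The plan is to treat both linearized equations (\ref{linsta}) and (\ref{lincos}) as instances of the linear problem (\ref{linequ}) from Lemma \ref{L1e}, with the single nonnegative coefficient $\alpha:=d_y(\cdot,y_{\bar u})$. By $(ii)$--$(iv)$ of Assumption \ref{A1}, together with the boundedness of $y_{\bar u}$ and $p_{\bar u}$ in $C(\bar\Omega)$ furnished by Propositions \ref{contstate} and \ref{concost}, this $\alpha$ is nonnegative and belongs to $L^\infty(\Omega)$, and the coefficients $H_{yy}(\cdot,y_{\bar u},p_{\bar u},\bar u)$ and $H_{yu}(\cdot,y_{\bar u},p_{\bar u},\bar u)$ likewise belong to $L^\infty(\Omega)$. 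Since $H_{yp}=f_y=-d_y$, I would rewrite (\ref{linsta}) as $\mathcal L z_v+\alpha(\cdot)z_v=\beta(\cdot)v$ and (\ref{lincos}) as
\begin{align*}
\mathcal L q_v+\alpha(\cdot)q_v=H_{yy}(\cdot,y_{\bar u},p_{\bar u},\bar u)z_v+H_{yu}(\cdot,y_{\bar u},p_{\bar u},\bar u)v,
\end{align*}
so that in each item the bound for $q_v$ is obtained by first bounding $z_v$ and then inserting that estimate into the right-hand side of the $q_v$ equation. The constants produced by Lemmas \ref{L1e} and \ref{L2e} are independent of the data, hence uniform in $v$ because $\alpha$ is fixed.

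For part $(i)$ I would fix $r>n/2$. Using $\beta\in L^\infty(\Omega)$ one has $|\beta v|_{L^r(\Omega)}\le|\beta|_{L^\infty(\Omega)}|v|_{L^r(\Omega)}$, so Lemma \ref{L1e} applied to the $z_v$ equation gives $|z_v|_{H^1(\Omega)\cap C(\bar\Omega)}\le c|v|_{L^r(\Omega)}$; in particular $|z_v|_{L^r(\Omega)}\le(\text{meas}\,\Omega)^{1/r}|z_v|_{C(\bar\Omega)}\le c|v|_{L^r(\Omega)}$. The right-hand side of the $q_v$ equation is then bounded in $L^r(\Omega)$ by a constant multiple of $|v|_{L^r(\Omega)}$, via the $L^\infty$ bounds on $H_{yy}$ and $H_{yu}$, and a second application of Lemma \ref{L1e} yields the same estimate for $q_v$. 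Adding the two proves $(i)$. Part $(ii)$ I would prove by the identical bootstrapping, replacing Lemma \ref{L1e} with Lemma \ref{L2e}: the $z_v$ equation gives $|z_v|_{L^2(\Omega)}\le c|\beta v|_{L^1(\Omega)}\le c|\beta|_{L^\infty(\Omega)}|v|_{L^1(\Omega)}$, whence $|z_v|_{L^1(\Omega)}\le(\text{meas}\,\Omega)^{1/2}|z_v|_{L^2(\Omega)}\le c|v|_{L^1(\Omega)}$. The $L^1$-norm of the right-hand side of the $q_v$ equation is then controlled by $|v|_{L^1(\Omega)}$ using the $L^\infty$ bounds on $H_{yy},H_{yu}$ and the previous estimate, and Lemma \ref{L2e} delivers the bound for $q_v$.

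For part $(iii)$ I would assume $v_k\rightharpoonup v$ weakly in $L^2(\Omega)$ and note first that multiplication by a fixed $L^\infty$ function preserves weak $L^2$-convergence, so $\beta v_k\rightharpoonup\beta v$ weakly in $L^2(\Omega)$; Lemma \ref{weakconv} applied to the $z$ equation then gives $z_{v_k}\to z_v$ in $C(\bar\Omega)$. This strong convergence is exactly what makes the $q$ equation tractable: because $z_{v_k}\to z_v$ in $C(\bar\Omega)$, hence strongly in $L^2(\Omega)$, and $H_{yy}(\cdot,y_{\bar u},p_{\bar u},\bar u)\in L^\infty(\Omega)$, the product $H_{yy}(\cdot,y_{\bar u},p_{\bar u},\bar u)z_{v_k}$ converges strongly in $L^2(\Omega)$, while $H_{yu}(\cdot,y_{\bar u},p_{\bar u},\bar u)v_k\rightharpoonup H_{yu}(\cdot,y_{\bar u},p_{\bar u},\bar u)v$ weakly; thus the full right-hand side of the $q_{v_k}$ equation converges weakly in $L^2(\Omega)$, and a second application of Lemma \ref{weakconv} yields $q_{v_k}\to q_v$ in $C(\bar\Omega)$. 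The only genuine obstacle is this coupling in $(iii)$: one must upgrade the weak convergence of the data to strong $C(\bar\Omega)$-convergence of $z_{v_k}$ before handling the term $H_{yy}z_{v_k}$, since mere weak convergence of $z_{v_k}$ would not control its product with the $L^\infty$ factor.
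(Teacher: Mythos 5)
Your proof is correct and takes essentially the same route as the paper, whose own proof simply states that items (i) and (ii) follow from Lemmas \ref{L1e} and \ref{L2e} and item (iii) from Lemma \ref{weakconv}, based on exactly the rewritings $\mathcal L z_v+d_y(\cdot,y_{\bar u})z_v=\beta(\cdot)v$ and $\mathcal L q_v+d_y(\cdot,y_{\bar u})q_v=H_{yy}(\cdot,y_{\bar u},p_{\bar u},\bar u)z_v+H_{yu}(\cdot,y_{\bar u},p_{\bar u},\bar u)v$ that the paper records in its footnotes and that you spell out, including the bootstrap from $z_v$ to $q_v$. One minor quibble: your closing claim that mere weak convergence of $z_{v_k}$ ``would not control'' the term $H_{yy}z_{v_k}$ overstates the obstacle, since multiplication by a fixed $L^\infty$ function preserves weak $L^2$-convergence (as you yourself use for $\beta v_k$), so Lemma \ref{weakconv} would apply to the $q$-equation even with only weak convergence of $z_{v_k}$; this does not affect the correctness of your argument, and the strong convergence of $z_{v_k}$ is needed anyway as part of the statement.
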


\begin{proof}
	Items $(i)$ and $(ii)$ follow from Lemma \ref{L1e} and \ref{L2e}, respectively. Item $(iii)$ follows from 
	Lemma \ref{weakconv}.
\end{proof}

\subsection{The switching mapping}

Let us begin this subsection recalling the first order necessary condition (Pontryagin principle {in integral form}) for problem 
(\ref{cost})--(\ref{system}). If $u\in\mathcal U$ is a local solution of problem (\ref{cost})--(\ref{system}), then
\begin{align}\label{varin}
\int_{\Omega}\Big[s(x,y_u)+\beta(x)p_u\Big](w-u)\, dx\ge0\quad\forall w\in\mathcal U.
\end{align}
The variational inequality (\ref{varin}) motivates the  following definition. For each $u\in\mathcal U$, define
\begin{align*}
\sigma_{u}:= s(\cdot,y_u)+\beta(\cdot)p_u.
\end{align*}
Observe that $\sigma_u=H_{u}(\cdot,y_u,p_u)$. The mapping $\mathcal Q:\mathcal U\to L^\infty(\Omega)$ given by $\mathcal Q(u):=\sigma_u$ is called 
the switching mapping. Given $v\in L^2(\Omega)$, we define 
\begin{align*}
\pi_{v}:=H_{uy}(\cdot,y_{\bar u},p_{\bar u})z_{v}+H_{up}(\cdot,y_{\bar u},p_{\bar u})q_v.
\end{align*}
{This definition is justified by the following estimate.}

\begin{prpstn}\label{sigmatilde}
	For each $r>n/2$ there exists $c>0$ such that
	\begin{align*}
	|\sigma_u-\sigma_{\bar u}-\pi_{u-\bar u}|_{L^\infty(\Omega)}\le c|u-\bar u|_{L^r(\Omega)}^2\quad\quad \forall u\in\mathcal U.
	\end{align*}
\end{prpstn}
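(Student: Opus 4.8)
The plan is to exploit the affine structure of the problem, which makes $\sigma_u$ coincide with the partial derivative $H_u$. Since $H(x,y,p,u)=w(x,y)+s(x,y)u+p(\beta(x)u-d(x,y))$, we have $H_u(x,y,p,u)=s(x,y)+\beta(x)p$, so that $\sigma_u=H_u(\cdot,y_u,p_u,u)$; note that $H_u$ does not depend on $u$, which is precisely why $\pi_v$ is written with only the three arguments $(\cdot,y_{\bar u},p_{\bar u})$. The relevant second derivatives are $H_{uy}=s_y(x,y)$ and $H_{up}=\beta(x)$, hence $\pi_v=s_y(\cdot,y_{\bar u})z_v+\beta(\cdot)q_v$. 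I would then split the error as
\begin{align*}
\sigma_u-\sigma_{\bar u}-\pi_{u-\bar u}=\big[s(\cdot,y_u)-s(\cdot,y_{\bar u})-s_y(\cdot,y_{\bar u})z_{u-\bar u}\big]+\beta(\cdot)\big[p_u-p_{\bar u}-q_{u-\bar u}\big],
\end{align*}
and bound each bracket separately in $L^\infty(\Omega)$.

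The adjoint bracket is immediate: since $\beta\in L^\infty(\Omega)$ by part $(iii)$ of Assumption \ref{A1}, Proposition \ref{Ecr} gives
\begin{align*}
|\beta(\cdot)(p_u-p_{\bar u}-q_{u-\bar u})|_{L^\infty(\Omega)}\le|\beta|_{L^\infty(\Omega)}\,|p_u-p_{\bar u}-q_{u-\bar u}|_{C(\bar\Omega)}\le c|u-\bar u|_{L^r(\Omega)}^2.
\end{align*}
For the state bracket, write $v=u-\bar u$ and apply the second-order Taylor theorem to $s$ in its second argument: for a.e.\ $x$ there is $\xi(x)$ between $y_{\bar u}(x)$ and $y_u(x)$ such that
\begin{align*}
s(\cdot,y_u)-s(\cdot,y_{\bar u})-s_y(\cdot,y_{\bar u})z_v=s_y(\cdot,y_{\bar u})(y_u-y_{\bar u}-z_v)+\tfrac{1}{2}s_{yy}(\cdot,\xi)(y_u-y_{\bar u})^2.
\end{align*}
The first summand is controlled by Proposition \ref{Esr} together with the boundedness of $s_y(\cdot,y_{\bar u})$; the second by Proposition \ref{contstate}, which yields $|y_u-y_{\bar u}|_{C(\bar\Omega)}\le c|v|_{L^r(\Omega)}$ and hence $|(y_u-y_{\bar u})^2|_{L^\infty(\Omega)}\le c^2|v|_{L^r(\Omega)}^2$, combined with the boundedness of $s_{yy}(\cdot,\xi)$. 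Summing the three estimates and invoking the triangle inequality gives the claim.

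The routine-but-essential point is to verify that the multiplier functions $s_y(\cdot,y_{\bar u})$ and $s_{yy}(\cdot,\xi)$ indeed lie in $L^\infty(\Omega)$. This follows from parts $(ii)$--$(iii)$ of Assumption \ref{A1}: $s_{yy}$ is locally Lipschitz, hence locally bounded, uniformly in $x$, and since $y_{\bar u}$ and $y_u$ (and therefore the intermediate value $\xi$) range in a fixed bounded set by the $C(\bar\Omega)$-boundedness asserted in Proposition \ref{contstate}, writing $s_y(\cdot,y_{\bar u})=s_y(\cdot,0)+\int_0^{y_{\bar u}}s_{yy}(\cdot,t)\,dt$ and using boundedness of $s_y(\cdot,0)$ supplies the needed uniform bounds. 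I do not anticipate a genuine obstacle: the argument closely parallels the proof of Proposition \ref{Ecr}, and the only mild point of care is the measurability of the intermediate point $\xi(\cdot)$, which is standard.
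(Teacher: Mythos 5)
Your proof is correct and follows essentially the same route as the paper's: a Taylor expansion of $H_u$ around $(y_{\bar u},p_{\bar u})$, control of the quadratic remainder via the Lipschitz estimate of Proposition \ref{contstate}, and replacement of $y_u-y_{\bar u}$ and $p_u-p_{\bar u}$ by $z_{u-\bar u}$ and $q_{u-\bar u}$ via Propositions \ref{Esr} and \ref{Ecr}, with the coefficient functions bounded through Assumption \ref{A1}$(ii)$--$(iii)$ exactly as the paper does. The only difference is organizational: the paper inserts an intermediate quantity $\tilde\pi_{u-\bar u}$ built from the true differences and estimates in two steps, whereas you exploit the affineness of $H_u$ in $p$ to split the error directly into an $s$-bracket and a $\beta$-bracket (which also removes the cross term $\alpha_2(\cdot)(y_u-y_{\bar u})(p_u-p_{\bar u})$ appearing in the paper's expansion), but the ingredients and final bounds are the same.
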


\begin{proof}
	Given $u\in\mathcal U$, we define $\psi_{u}:\Omega\to\mathbb R^3$ by $\psi_{u}(x):=(x,y_{u}(x),p_{u}(x))$. 
	For each $u\in \mathcal U$, we {denote}
	\begin{align*}
	\tilde\pi_{u-\bar u}:=H_{uy}(\psi_{\bar u})(y_{u}-y_{\bar u})+H_{up}(\psi_{\bar u})(p_u-p_{\bar u}).
	\end{align*}
	Let $u\in\mathcal U$ and $r>n/2$ be arbitrary, and abbreviate $v=u-\bar u$.  Using the Taylor Theorem (integral form of the remainder) and $(ii)$-$(iii)$ of Assumption \ref{A1}, one can find $\alpha\in L^\infty(\Omega)$ such that
	\begin{align*}
	H_{u}(\psi_{u})=&H_{u}(\psi_{\bar u})+H_{uy}(\psi_{\bar u})(y_{u}-y_{\bar u})+H_{up}(\psi_{\bar u})(p_{u}-p_{\bar u})+\alpha(\cdot)(y_{u}-y_{\bar u})^2.
	\end{align*}
	Therefore, by Proposition \ref{contstate}, there exists $c_1>0$ such that
	\begin{align*}
	|\sigma_u-\sigma_{\bar u}-\tilde \pi_{v}|_{L^\infty(\Omega)}\le c_1|v|_{L^r(\Omega)}^2.
	\end{align*}
	Now, 
	\begin{align*}
	|\tilde \pi_{v}-\pi_v|_{L^\infty(\Omega)}\le |H_{uy}(\cdot,y_{\bar u},p_{\bar u})(y_{u}-y_{\bar u}-z_{v})+
	H_{up}(\cdot,y_{\bar u},p_{\bar u})(q_u-q_{\bar u}-q_v)|_{L^\infty(\Omega)}.
	\end{align*}
	Hence, by Proposition \ref{Esr} and \ref{Ecr}, there exists $c_2>0$ such that 
	\begin{align*}
	|\tilde \pi_{v}-\pi_v|_{L^\infty(\Omega)}\le c_2|v|_{L^r(\Omega)}^2.
	\end{align*}
	Finally, by the triangle inequality,
	\begin{align*}
	|\sigma_u-\sigma_{\bar u}- \pi_{v}|_{L^\infty(\Omega)}\le|\sigma_u-\sigma_{\bar u}-\tilde \pi_{v}|_{L^\infty(\Omega)}
	+|\tilde \pi_{v}-\pi_v|_{L^\infty(\Omega)}.
	\end{align*}
	The result follows defining $c:=c_1+c_2$.
\end{proof}

Proposition \ref{sigmatilde} yields immediately that the  differential of the switching mapping 
$\mathcal Q$ at $\bar u$ in any direction $v\in\mathcal U-\bar u$ exists and is given by $d{\mathcal Q}(\bar u;v)=\pi_{v}.$

One of the important features of the mapping $v\to\pi_v$ is the following.

\begin{prpstn}\label{opQ}
	For all $v\in L^2(\Omega)$, we have
	\begin{align*}
	\int_{\Omega} \pi_{v}v\, dx=\int_{\Omega}\Big[ H_{yy}(x,y_{\bar u},p_{\bar u},\bar u)z_{v}^2+
	2H_{uy}(x,y_{\bar u},p_{\bar u},\bar u)z_{v} v \Big]\,dx.
	\end{align*}
\end{prpstn}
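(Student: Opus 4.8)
The plan is to reduce the identity to the self-adjointness of $\mathcal L$ by pairing the linearized state equation for $z_v$ against $q_v$ and then inserting the linearized costate equation for $q_v$. First I would record the explicit forms of the relevant second derivatives of the Hamiltonian, which come from $H(x,y,p,u)=w(x,y)+s(x,y)u+p\big(\beta(x)u-d(x,y)\big)$. Since $H$ is affine in both $p$ and $u$, one has $H_{up}(\cdot,y,p,u)=\beta(\cdot)$, $H_{yp}(\cdot,y,p,u)=-d_y(\cdot,y)$ and $H_{uy}(\cdot,y,p,u)=H_{yu}(\cdot,y,p,u)=s_y(\cdot,y)$; in particular none of these depends on $u$ (nor the first two on $p$), so writing $H_{uy}(\cdot,y_{\bar u},p_{\bar u})$ as in the definition of $\pi_v$ and $H_{uy}(\cdot,y_{\bar u},p_{\bar u},\bar u)$ as on the right-hand side denotes the same function, and the two mixed partials coincide. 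With these identifications, equation (\ref{linsta}) for $z_v$ reads $\mathcal L z_v+d_y(\cdot,y_{\bar u})z_v=\beta(\cdot)v$ and equation (\ref{lincos}) for $q_v$ reads $\mathcal L q_v+d_y(\cdot,y_{\bar u})q_v=H_{yy}(\cdot,y_{\bar u},p_{\bar u},\bar u)z_v+H_{yu}(\cdot,y_{\bar u},p_{\bar u},\bar u)v$, exactly as in the two footnotes; moreover $z_v,q_v\in D(\mathcal L)$ by Lemma~\ref{L1e}.

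The core of the argument is a single integration by parts. Using $\pi_v=s_y(\cdot,y_{\bar u})z_v+\beta(\cdot)q_v$, I would split
\begin{align*}
\int_\Omega \pi_v v\, dx = \int_\Omega s_y(\cdot,y_{\bar u}) z_v v\, dx + \int_\Omega \beta(\cdot) q_v v\, dx.
\end{align*}
In the second integral I replace $\beta(\cdot)v$ by $\mathcal L z_v+d_y(\cdot,y_{\bar u})z_v$ using the state equation, then transfer $\mathcal L$ from $z_v$ onto $q_v$ via the integration by parts formula (\ref{intbyparts}) — legitimate precisely because $A$ is symmetric and $z_v,q_v\in D(\mathcal L)$ — to obtain
\begin{align*}
\int_\Omega \beta(\cdot) q_v v\, dx = \int_\Omega q_v\big(\mathcal L z_v+d_y(\cdot,y_{\bar u})z_v\big)\, dx = \int_\Omega z_v\big(\mathcal L q_v+d_y(\cdot,y_{\bar u})q_v\big)\, dx.
\end{align*}
Substituting the costate equation for $\mathcal L q_v+d_y(\cdot,y_{\bar u})q_v$ converts this into $\int_\Omega H_{yy}(\cdot,y_{\bar u},p_{\bar u},\bar u)z_v^2\, dx+\int_\Omega H_{yu}(\cdot,y_{\bar u},p_{\bar u},\bar u)z_v v\, dx$.

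Finally I would combine this with the first term: since $s_y(\cdot,y_{\bar u})=H_{uy}(\cdot,y_{\bar u},p_{\bar u},\bar u)=H_{yu}(\cdot,y_{\bar u},p_{\bar u},\bar u)$, the two integrals carrying $z_v v$ add up to $2\int_\Omega H_{uy}(\cdot,y_{\bar u},p_{\bar u},\bar u)z_v v\, dx$, which yields the claimed formula. All integrals are finite because $z_v,q_v\in C(\bar\Omega)$ by Proposition~\ref{furtherprop0}(i) and the coefficients $s_y(\cdot,y_{\bar u})$, $d_y(\cdot,y_{\bar u})$ and $H_{yy}(\cdot,y_{\bar u},p_{\bar u},\bar u)$ are bounded by Assumption~\ref{A1}. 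The one step deserving genuine care is the application of the self-adjointness identity (\ref{intbyparts}): it is what turns the pairing of $v$ against $q_v$ into a pairing of $z_v$ against the costate operator, and it is here that the symmetry hypothesis on $A$ is essential; everything else is bookkeeping with the explicit derivatives of $H$.
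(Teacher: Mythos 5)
Your proof is correct and follows essentially the same route as the paper's: both rewrite $H_{up}(\cdot,y_{\bar u},p_{\bar u})v\,q_v = \beta v\,q_v$ via the linearized state equation as $\big(\mathcal L z_v + d_y(\cdot,y_{\bar u})z_v\big)q_v$, apply the self-adjointness identity (\ref{intbyparts}) to move $\mathcal L$ onto $q_v$, substitute the linearized costate equation, and then add the $H_{uy}z_v v$ term from the definition of $\pi_v$. The explicit computation of the second derivatives of $H$ and the integrability remarks you include are harmless additions; the core argument is identical.
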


\begin{proof}
	In order to simplify notation, we write $\psi_{\bar u}(x):=(x,y_{\bar u}(x),p_{\bar u}(x),\bar u(x))$ for 
	each $x\in\Omega$. Let $v\in L^2(\Omega)$ be arbitrary. By the integration by parts formula (\ref{intbyparts}) and the concrete form of the Hamiltonian, we get
	\begin{align*}
	\int_{\Omega} H_{up}(\psi_{\bar u})q_{v}v\, dx&=\int_{\Omega} \big(\mathcal L z_v+d_y(x,y_{\bar u})z_v\big)q_v\,dx
	=\int_{\Omega}\big(\mathcal L q_v+d_y(x,y_{\bar u})q_v\big)z_v\, dx\\
	&=\int_{\Omega}\big(H_{yy}(\psi_{\bar u})z_v+H_{uy}(\psi_{\bar u})v\big)z_v=
	\int_{\Omega}\Big[H_{yy}(\psi_{\bar u})z_v^2+H_{uy}(\psi_{\bar u})z_v v\Big]\, dx.
	\end{align*}
	The result follows since
	\begin{align*}
	\int_{\Omega} \pi_{v}v\, dx=\int_{\Omega}H_{uy}(\psi_{\bar u})z_v v\, dx+
	\int_{\Omega}H_{up}(\psi_{\bar u})q_v v\, dx.
	\end{align*}
\end{proof}

We give further properties of the mapping $v\to\pi_v$ in the next proposition, its proof follows trivially from Proposition 
\ref{furtherprop0}.

\begin{prpstn}\label{furprop3}
	The following statements hold.
	\begin{itemize}
		\item[(i)] For each $r>n/2$ there exists a positive number $c$ such that 
		\begin{align*}
		|\pi_{v}|_{L^\infty(\Omega)}\le c|v|_{L^r(\Omega)}\quad\forall v\in L^2(\Omega)\cap L^r(\Omega).
		\end{align*}
		\item[(ii)] There exists a positive number $c$ such that 
		\begin{align*}
		|\pi_{v}|_{L^2(\Omega)}\le c|v|_{L^1(\Omega)} \quad \forall v\in L^2(\Omega).
		\end{align*}
		\item[(iii)] Let $\{v_k\}_{k=1}^\infty$ be a sequence in $L^2(\Omega)$ and $v\in L^2(\Omega)$.
		If $v_k\rightharpoonup v$ weakly in $L^2(\Omega)$, then $\pi_{v_{k}}\to \pi_{v}$ in $L^\infty(\Omega)$. 
	\end{itemize}
\end{prpstn}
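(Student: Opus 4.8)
The plan is to exploit the linear dependence of $\pi_v$ on the pair $(z_v,q_v)$ through fixed multipliers, so that each of the three assertions reduces immediately to its counterpart in Proposition \ref{furtherprop0}. First I would observe that, by the definition of $H$, one has $H_{up}(\cdot,y_{\bar u},p_{\bar u})=\beta(\cdot)$ and $H_{uy}(\cdot,y_{\bar u},p_{\bar u})=s_y(\cdot,y_{\bar u})$, and that both of these functions belong to $L^\infty(\Omega)$. The boundedness of $\beta$ is part $(iii)$ of Assumption \ref{A1}, while $s_y(\cdot,y_{\bar u})\in L^\infty(\Omega)$ follows from $(ii)$ and $(iii)$ of Assumption \ref{A1} together with the boundedness of $y_{\bar u}$ in $C(\bar\Omega)$ (Proposition \ref{contstate}), exactly as the boundedness of $g_y(\cdot,y_u,u)$ is obtained in the proof of Proposition \ref{concost}. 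I would then set $M:=|H_{uy}(\cdot,y_{\bar u},p_{\bar u})|_{L^\infty(\Omega)}+|H_{up}(\cdot,y_{\bar u},p_{\bar u})|_{L^\infty(\Omega)}$.

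For items $(i)$ and $(ii)$ the argument is to take norms in the identity $\pi_v=H_{uy}(\cdot,y_{\bar u},p_{\bar u})z_v+H_{up}(\cdot,y_{\bar u},p_{\bar u})q_v$ and use the triangle inequality together with the elementary bound $|a(\cdot)\phi|_{L^q(\Omega)}\le|a|_{L^\infty(\Omega)}|\phi|_{L^q(\Omega)}$, valid for $q\in\{2,\infty\}$. This yields $|\pi_v|_{L^\infty(\Omega)}\le M\big(|z_v|_{C(\bar\Omega)}+|q_v|_{C(\bar\Omega)}\big)$ for item $(i)$, and the right-hand side is controlled by a constant times $|v|_{L^r(\Omega)}$ through Proposition \ref{furtherprop0}$(i)$; likewise $|\pi_v|_{L^2(\Omega)}\le M\big(|z_v|_{L^2(\Omega)}+|q_v|_{L^2(\Omega)}\big)$ for item $(ii)$, and Proposition \ref{furtherprop0}$(ii)$ bounds this by a constant times $|v|_{L^1(\Omega)}$.

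For item $(iii)$ I would use the same linearity once more. If $v_k\rightharpoonup v$ weakly in $L^2(\Omega)$, then Proposition \ref{furtherprop0}$(iii)$ gives $z_{v_k}\to z_v$ and $q_{v_k}\to q_v$ in $C(\bar\Omega)$. Since the multipliers are fixed, $\pi_{v_k}-\pi_v=H_{uy}(\cdot,y_{\bar u},p_{\bar u})(z_{v_k}-z_v)+H_{up}(\cdot,y_{\bar u},p_{\bar u})(q_{v_k}-q_v)$, so that $|\pi_{v_k}-\pi_v|_{L^\infty(\Omega)}\le M\big(|z_{v_k}-z_v|_{C(\bar\Omega)}+|q_{v_k}-q_v|_{C(\bar\Omega)}\big)\to 0$, which is the asserted convergence in $L^\infty(\Omega)$.

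There is essentially no genuine obstacle here, which is why the statement is flagged as following trivially from Proposition \ref{furtherprop0}: all three items amount to the triangle inequality composed with the corresponding estimate for $z_v$ and $q_v$, the multiplier in each product being a fixed $L^\infty$ function. The only step deserving a line of justification rather than being purely mechanical is the verification that the multiplier $s_y(\cdot,y_{\bar u})$ lies in $L^\infty(\Omega)$, and this is settled exactly as in the proof of Proposition \ref{concost}.
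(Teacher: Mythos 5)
Your proof is correct and follows exactly the route the paper intends: the paper simply states that the result "follows trivially from Proposition \ref{furtherprop0}," and your argument makes that reduction explicit by writing $\pi_v$ as a fixed-$L^\infty$-multiplier combination of $z_v$ and $q_v$ (using $H_{uy}=s_y(\cdot,y_{\bar u})$, $H_{up}=\beta$) and applying the triangle inequality together with items $(i)$--$(iii)$ of that proposition. The justification of $s_y(\cdot,y_{\bar u})\in L^\infty(\Omega)$ via Assumption \ref{A1} and Proposition \ref{contstate} matches the paper's own reasoning in the proof of Proposition \ref{concost}, so nothing is missing.
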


Proposition \ref{opQ} motivates the following definition.
For each $v\in L^2(\Omega)$, define
\begin{align}\label{quadraticform}
\Lambda(v):=\int_{\Omega}\Big[ H_{yy}(x,y_{\bar u},p_{\bar u},\bar u)z_{v}^2+
2H_{uy}(x,y_{\bar u},p_{\bar u},\bar u)z_v v \Big]\,dx.
\end{align}

\begin{rmrk}
	We mention that the quadratic form $\Lambda:L^2(\Omega)\to\mathbb R$ is the second variation of the objective
	functional $\mathcal J:\mathcal U\to\mathbb R$ at $\bar u$. By Proposition \ref{opQ}, we also have 
	the following representation
	\begin{align*}
	\Lambda(v)=\int_\Omega \pi_{v}v\,dx\quad\forall v\in L^2(\Omega).
	\end{align*}
\end{rmrk}

We close this section with a result concerning the quadratic form (\ref{quadraticform}). 

\begin{prpstn}\label{lemcas}
	Let $\{v_k\}_{k=1}^\infty\subset L^2(\Omega)$ and $v\in L^2(\Omega)$. If $v_k\rightharpoonup v$ 
	weakly in $L^2(\Omega)$, then $\Lambda(v_k)\to \Lambda(v)$. 
\end{prpstn}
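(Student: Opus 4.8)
The plan is to exploit the representation $\Lambda(v)=\int_\Omega \pi_v v\,dx$ recorded in the remark after Proposition \ref{opQ}, which turns the claim into a statement about the behaviour of the bilinear pairing $\int_\Omega \pi_{v_k}v_k\,dx$ under weak convergence of $v_k$. The decisive point is that $v_k\rightharpoonup v$ weakly in $L^2(\Omega)$ entails, first, that $\{v_k\}$ is bounded in $L^2(\Omega)$ (hence, since $\Omega$ is bounded, in $L^1(\Omega)$) by the uniform boundedness principle, and second, by Proposition \ref{furprop3}(iii), that $\pi_{v_k}\to\pi_v$ strongly in $L^\infty(\Omega)$. Thus I am pairing a strongly convergent factor against a weakly convergent one, the classical situation in which the product converges.

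To make this precise I would write, for each $k$,
\begin{align*}
\int_\Omega \pi_{v_k}v_k\,dx-\int_\Omega\pi_v v\,dx=\int_\Omega(\pi_{v_k}-\pi_v)v_k\,dx+\int_\Omega\pi_v(v_k-v)\,dx,
\end{align*}
and estimate the two terms separately. The first term is bounded by $|\pi_{v_k}-\pi_v|_{L^\infty(\Omega)}\,|v_k|_{L^1(\Omega)}$, which tends to zero because $\pi_{v_k}\to\pi_v$ in $L^\infty(\Omega)$ while $|v_k|_{L^1(\Omega)}$ stays bounded. The second term tends to zero directly from the definition of weak convergence, since $\pi_v\in L^\infty(\Omega)\subset L^2(\Omega)$ is a fixed test function and $v_k-v\rightharpoonup 0$ weakly in $L^2(\Omega)$. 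Adding the two estimates gives $\Lambda(v_k)\to\Lambda(v)$.

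The only genuine obstacle is the mixed term $\int_\Omega(\pi_{v_k}-\pi_v)v_k\,dx$, where one cannot pass to the limit factor by factor: weak convergence of $v_k$ alone does not control a product, and the strong $L^\infty$ convergence of $\pi_{v_k}$ supplied by Proposition \ref{furprop3}(iii) is exactly what compensates for this. Should one prefer to argue from the definition (\ref{quadraticform}) instead of the $\pi$-representation, the same weak--strong mechanism applies: the quadratic term $\int_\Omega H_{yy}(\cdot,y_{\bar u},p_{\bar u},\bar u)z_{v_k}^2\,dx$ converges because $z_{v_k}\to z_v$ in $C(\bar\Omega)$ by Proposition \ref{furtherprop0}(iii) and $H_{yy}(\cdot,y_{\bar u},p_{\bar u},\bar u)\in L^\infty(\Omega)$, while the cross term $\int_\Omega H_{uy}(\cdot,y_{\bar u},p_{\bar u},\bar u)z_{v_k}v_k\,dx$ is handled by the identical splitting, using the uniform (hence $L^2$) convergence $z_{v_k}\to z_v$ together with the weak convergence of $v_k$.
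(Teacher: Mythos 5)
Your proposal is correct and follows essentially the same route as the paper: the paper's proof also uses the representation $\Lambda(v)=\int_\Omega \pi_v v\,dx$, the splitting $\Lambda(v_k)=\int_\Omega(\pi_{v_k}-\pi_v)v_k\,dx+\int_\Omega\pi_v v_k\,dx$, the strong convergence $\pi_{v_k}\to\pi_v$ in $L^\infty(\Omega)$ from Proposition \ref{furprop3}(iii), and the weak convergence of $v_k$ for the remaining term. Your write-up merely makes explicit the boundedness of $\{v_k\}$ in $L^1(\Omega)$, which the paper leaves implicit.
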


\begin{proof}
	By Proposition \ref{furprop3}, $\pi_{v_k}\to\pi_v$ in $L^\infty(\Omega)$, therefore
	\begin{align*}
	\Lambda(v_k)=\int_\Omega (\pi_{v_k}-\pi_v)v_k\,dx+\int_\Omega \pi_{v}v_k\,dx \to \int_\Omega \pi_{v}v\,dx.
	\end{align*}
\end{proof}

\section{Stability}\label{Section Stab}
In this section, we study the stability of the optimal solution
of problem (\ref{cost})--(\ref{system}) with respect to perturbations.
As usual in optimization, the stability of the solution is derived
from stability of the system of necessary optimality conditions.
The investigated stability property of the latter is the so-called
strong metric Hölder subregularity (SMHSr), see e.g., \cite[Section 3I]{Dontchevbook} or \cite[Section 4]{Subreg}.
After introducing the assumptions we study the SMHSr property
of the variational inequality (9). Then the result is used to obtain
this property for the whole system of necessary optimality conditions

\subsection{The main assumption}
We begin the section recalling that  
$\bar u\in\mathcal U$ is a local minimizer of problem (\ref{cost})--(\ref{system}), and the definition of the quadratic form 
$\Lambda: L^2(\Omega)\to\mathbb R$ in (\ref{quadraticform}).
\begin{ssmptn}\label{A2}
	There exist positive numbers $\alpha_0,\gamma_0$ and $k^*\in[1,4/n)$ such that
	\begin{align}\label{asscond}
	\int_{\Omega}\sigma_{\bar u}(u-\bar u)\,dx+\Lambda(u-\bar u)\ge\gamma_0|u-\bar u|_{L^1(\Omega)}^{{k^*}+1},
	\end{align}
	for all $u\in\mathcal U$ with $|u-\bar u|_{L^1(\Omega)}\le\alpha_0$.
\end{ssmptn}
Assumption \ref{A2} resembles the well-known $L^2$-coercivity condition in optimal control, with two substantial differences: $(i)$ the left-hand side of (\ref{asscond}) involves a linear term (not only the quadratic form in the $L^2$-coercivity condition); $(ii)$ the $L^1$-norm appears in the right-hand side of (\ref{asscond}). We mention that the standard $L^2$-coercivity condition cannot hold in affine problems. 
Assumption \ref{A2} in the particular case ${k^*}=1$ has been used before in the literature on optimal control 
problems constrained by ordinary differential equations, see \cite[Assumption A2']{SubregOsm} or 
\cite[Assumption A2]{Mayersubreg}. A similar assumption was used in \cite[Assumption 2]{Hoeldersubreg}.
We first point out that if $\bar u$ satisfies Assumption \ref{A2}, then it must be bang-bang. A control $u\in\mathcal U$ is bang-bang  
if $ u(x)\in\{b_1(x),b_2(x)\}$ for a.e. $x$ in $\Omega$. The proof of this result 
follows the arguments given in the proof of \cite[Theorem 2.1]{Casanum}.

\begin{prpstn}\label{bangbang}
	If $\bar u\in \mathcal U$ satisfies Assumption \ref{A2}, then $\bar u$ is bang-bang.
\end{prpstn}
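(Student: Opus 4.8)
The plan is to argue by contraposition: assuming $\bar u$ is \emph{not} bang-bang, I construct a sequence of admissible perturbations along which the left-hand side of (\ref{asscond}) tends to zero while its right-hand side stays bounded away from zero, contradicting Assumption \ref{A2}. If $\bar u$ is not bang-bang, then the set $E_\varepsilon := \{x\in\Omega : b_1(x)+\varepsilon \le \bar u(x)\le b_2(x)-\varepsilon\}$ has positive measure for some $\varepsilon>0$. On $E_\varepsilon$ the control is strictly interior, so testing the variational inequality (\ref{varin}) pointwise yields $\sigma_{\bar u}=0$ a.e. on $E_\varepsilon$. In any case, the decisive structural fact I will rely on is the weak continuity of the quadratic form $\Lambda$ established in Proposition \ref{lemcas}, which holds precisely because the affine structure removes the $H_{uu}$-term, leaving a form governed by the compact state-linearization $z_v$.

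Next I would build the perturbations. Since the Lebesgue measure is non-atomic, for each $k$ I can partition $E_\varepsilon$ into $2k$ pieces of equal, vanishing measure and define $\phi_k\colon\Omega\to\{-1,0,1\}$, supported on $E_\varepsilon$, equal to $+1$ on $k$ of the pieces and $-1$ on the other $k$, so that $\int_{E_\varepsilon}\phi_k\,dx=0$ and $|\phi_k|_{L^1(\Omega)}=\text{meas}\,E_\varepsilon$ for every $k$. The standard oscillation estimate (approximating test functions by continuous ones and exploiting the vanishing mesh) shows $\phi_k\rightharpoonup 0$ weakly in $L^2(\Omega)$. I then fix $t\in(0,\varepsilon]$ small enough that $t\,\text{meas}\,E_\varepsilon\le\alpha_0$ and set $v_k:=t\,\phi_k$ and $u_k:=\bar u+v_k$. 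Because $|t\phi_k|\le\varepsilon$ on $E_\varepsilon$ and $\phi_k=0$ elsewhere, $u_k\in\mathcal U$, and $|u_k-\bar u|_{L^1(\Omega)}=t\,\text{meas}\,E_\varepsilon\le\alpha_0$, so Assumption \ref{A2} applies to each $u_k$.

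Finally I would pass to the limit. Since $v_k\rightharpoonup 0$ weakly in $L^2(\Omega)$ and $\sigma_{\bar u}\in L^\infty(\Omega)$, the linear term $\int_\Omega\sigma_{\bar u}v_k\,dx\to 0$ (indeed it is identically zero, as $\sigma_{\bar u}=0$ on $E_\varepsilon$), while Proposition \ref{lemcas} gives $\Lambda(v_k)\to\Lambda(0)=0$. Hence the left-hand side of (\ref{asscond}) evaluated at $u_k$ tends to $0$, whereas its right-hand side equals the fixed positive constant $\gamma_0\,(t\,\text{meas}\,E_\varepsilon)^{k^*+1}$. For $k$ large this violates (\ref{asscond}), the desired contradiction; note that the exponent $k^*$ is immaterial here, only the positivity of the right-hand side matters.

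The part requiring the most care is the construction of the oscillating sequence $\{\phi_k\}$ and the verification that it converges weakly to zero while keeping its $L^1$-norm fixed and remaining admissible; once this is in place, the weak continuity of $\Lambda$ (Proposition \ref{lemcas}) does all the analytic work and the contradiction is immediate. Everything else — the positive-measure interior set, admissibility of $u_k$, and the vanishing of the linear term — is routine.
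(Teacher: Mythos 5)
Your proof is correct and follows essentially the same route as the paper: both arguments take a positive-measure set where $\bar u$ is at distance $\ge \varepsilon$ from the bounds, build admissible perturbations $u_k=\bar u+v_k$ with $v_k$ a $\pm$-valued oscillating sequence of fixed $L^1$-norm converging weakly to zero, and then invoke the weak continuity of $\Lambda$ (Proposition \ref{lemcas}) to make the left-hand side of (\ref{asscond}) vanish while the right-hand side stays at a fixed positive constant. The only cosmetic difference is that the paper simply asserts the existence of the weakly null $\{\pm\varepsilon^*\}$-valued sequence, whereas you sketch an explicit fine-mesh construction (whose weak convergence does require the $+1/-1$ pieces to interleave locally, not merely to have equal measure); your additional observations ($\sigma_{\bar u}=0$ on the interior set, $\int\phi_k\,dx=0$) are harmless but unnecessary.
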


\begin{proof}
	Let $\alpha_0$ and $\gamma_0$ be the positive numbers in Assumption \ref{A2}. 
	Suppose that there exists $\varepsilon>0$ and  a measurable set $E\subset\Omega$ of positive measure such that 
	\begin{align*}
	\bar u(x)\in [b_1(x)+\varepsilon,b_2(x)-\varepsilon]\quad \text{for a.e. $x\in E$.}
	\end{align*}
	Define $\varepsilon^*:=\min\{\alpha_0 (\text{meas}\hspace*{0.05cm} E)^{-1}, \varepsilon\}$. 
	Let $\{v_m\}_{m=1}^\infty\subset L^2(\Omega)$ be a sequence converging to zero weakly in $L^2(\Omega)$ 
	such that for each $m\in\mathbb N$,  $v_{m}(x)\in\{-\varepsilon^*,\varepsilon^*\}$ for a.e. $x\in\Omega$. 
	For each $m\in \mathbb N$, define
	\begin{align*}
	u_{m}(x):=\left\{ \begin{array}{lcc}
	\bar u(x) &   if  & x\notin E \\
	\\ \bar u(x)+v_{m}(x)&  if & x\in E.
	\end{array} \right.
	\end{align*}
	Clearly, for each $m\in\mathbb N$, $u_m$ belongs to $\mathcal U$ and
	\begin{align*}
	|u_{m}-\bar u|_{L^1(\Omega)}=\varepsilon^* \hspace*{0.05cm}\text{meas}\hspace*{0.05cm} E.
	\end{align*} 
	Hence, by Assumption \ref{A2}
	\begin{align}\label{lhs}
	\int_{\Omega}\sigma_{\bar u}(u_m-\bar u)\,dx+\Lambda(u_m-\bar u)\ge\gamma_0\Big(\varepsilon^*
	\hspace*{0.05cm}\text{meas}\hspace*{0.05cm} E\Big)^{{k^*}+1}
	\end{align}
	for all $m\in\mathbb N$. Since $u_m\rightharpoonup \bar u$ weakly in $L^2(\Omega)$, we have 
	by {Proposition} \ref{lemcas} that the left hand side of $(\ref{lhs})$ converges to $0$; a contradiction.
\end{proof}

Proposition \ref{bangbang} makes the following lemma relevant. The proof follows the argument used in the proof of \cite[Theorem 4.4]{BangBangconvergence}. Alternatively, as argued in the proof of \cite[Theorem 4.3]{Wachelliptic}, one can also use \cite[Theorem 1]{Visin} and the fact that for a.e. $x\in\Omega$, $u(x)$ is an extremal point of  $\overline{\text{conv}}(\{u_{k}(x)\}_{k=1}^\infty\cup u(x))$ if $u\in\mathcal U$ is bang-bang.

\begin{lmm}\label{weakimpliesstrong}
	Let $u\in\mathcal U$ be bang-bang, and $\{u_{k}\}_{k=1}^\infty\subset \mathcal U$ be a sequence.
	If $u_k\rightharpoonup u$ weakly in $L^1(\Omega)$, then $u_k\to u$ in $L^1(\Omega)$.
\end{lmm}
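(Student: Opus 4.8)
The plan is to exploit the fact that a bang-bang control is an extreme point of $\mathcal U$, so that the sign of $u_k-u$ is entirely determined by whether $u$ attains the lower or the upper bound at a given point. This observation converts the $L^1$-norm of $u_k-u$ into the pairing of $u_k-u$ with a single fixed $L^\infty$ function, which then vanishes by weak convergence in $L^1(\Omega)$.

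First I would partition $\Omega$ (up to a null set) into the disjoint measurable sets $\Omega_2:=\{x\in\Omega:u(x)=b_2(x)\}$ and $\Omega_1:=\Omega\setminus\Omega_2$. Since $u$ is bang-bang, $u(x)\in\{b_1(x),b_2(x)\}$ for a.e.\ $x$, and hence $u=b_1$ a.e.\ on $\Omega_1$; both sets are measurable because $u$, $b_1$ and $b_2$ are measurable. I would then record the sign information coming from $b_1\le u_k\le b_2$ a.e.: on $\Omega_1$ one has $u_k-u=u_k-b_1\ge0$, while on $\Omega_2$ one has $u_k-u=u_k-b_2\le0$, for every $k$.

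Next, setting $w:=\chi_{\Omega_1}-\chi_{\Omega_2}$, which belongs to $L^\infty(\Omega)$ with $|w|_{L^\infty(\Omega)}\le1$, the sign analysis gives the pointwise identity $w\,(u_k-u)=|u_k-u|$ a.e.\ in $\Omega$, and therefore
\begin{align*}
|u_k-u|_{L^1(\Omega)}=\int_\Omega w\,(u_k-u)\,dx.
\end{align*}
Finally, since $L^\infty(\Omega)$ is the dual of $L^1(\Omega)$ and $u_k\rightharpoonup u$ weakly in $L^1(\Omega)$, testing against the fixed function $w\in L^\infty(\Omega)$ shows that the right-hand side tends to $0$, whence $|u_k-u|_{L^1(\Omega)}\to0$, which is the claimed strong convergence. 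The only point requiring a little care is the measurability of the partition and the verification that $w\,(u_k-u)=|u_k-u|$ holds almost everywhere; both are immediate from the sign analysis above, so there is no genuine analytic obstacle in the argument.
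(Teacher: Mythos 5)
Your proof is correct and is essentially the same argument the paper uses: decompose $\Omega$ according to which bound the bang-bang control $u$ attains, observe that the sign of $u_k-u$ is then fixed on each piece, and test the weak convergence against the single $L^\infty$ function $\chi_{\Omega_1}-\chi_{\Omega_2}$ to convert the $L^1$-norm into a quantity that vanishes. Your write-up is just a more detailed version of the paper's one-line proof, with the sign analysis and the measurability points made explicit.
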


\begin{proof}
	Let $\Omega_i:=\{x\in\Omega:u(x)=b_i(x)\}$, $i=1,2$. Let $\chi_{\Omega_i}:\Omega\to\{0,1\}$ denote the characteristic function of the set $\Omega_i$, $i=1,2$. Now, by definition of weak convergence
	\begin{align*}
	\int_{\Omega}|u_k-u|\, dx=\int_{\Omega}\chi_{\Omega_1}(u_n-\bar u)\, dx-\int_{\Omega}\chi_{\Omega_2}(u_n-\bar u)\, dx\to 0.
	\end{align*}
\end{proof}

The next proposition shows that the switching  mapping satisfies a growth condition. 
The proof consists of two steps. The first one is to show that Assumption \ref{A2} implies this growth 
condition for the linearization of the switching mapping. The second step is to adequately use the linearization 
as an  approximation of the switching mapping. 

\begin{prpstn}\label{propofequiv}
	Let Assumption 2 be fulfilled. Then there exist positive numbers $\alpha$ and $\gamma$ such that
	\begin{align*}
	\int_{\Omega} \sigma_{u}(u-\bar u)\,dx\ge \gamma |u-\bar u|_{L^1(\Omega)}^{k^*+1}
	\end{align*}
	for all $ u\in\mathcal U$ with $|u-\bar u|_{L^1(\Omega)}\le\alpha$.
\end{prpstn}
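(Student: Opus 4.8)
The plan is to follow the two-step strategy announced in the proposition's preamble: first transfer the coercivity in Assumption \ref{A2} to the \emph{linearized} switching term, and then use the second-order approximation estimates to pass from the linearization back to the genuine switching function $\sigma_u$. Throughout I abbreviate $v:=u-\bar u$ and write $\rho:=|v|_{L^1(\Omega)}$.

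\textbf{Step 1 (coercivity of the linearized functional).} I would first show that the quantity $\int_\Omega \sigma_{\bar u}\,v\,dx + \Lambda(v)$, appearing on the left of (\ref{asscond}), is exactly the first-order Taylor expansion of $\int_\Omega \sigma_u\,v\,dx$ around $\bar u$. Indeed, by Proposition \ref{sigmatilde} we have $\sigma_u = \sigma_{\bar u} + \pi_v + O(|v|_{L^r(\Omega)}^2)$ in $L^\infty(\Omega)$, and by the representation $\Lambda(v)=\int_\Omega \pi_v v\,dx$ from the Remark following Proposition \ref{furprop3}, the sum $\int_\Omega \sigma_{\bar u} v\,dx + \Lambda(v) = \int_\Omega(\sigma_{\bar u}+\pi_v)v\,dx$ is precisely the linearized switching term tested against $v$. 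Thus Assumption \ref{A2} reads
\begin{align*}
\int_\Omega(\sigma_{\bar u}+\pi_v)v\,dx \ge \gamma_0\,\rho^{k^*+1}
\end{align*}
for all $u\in\mathcal U$ with $\rho\le\alpha_0$.

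\textbf{Step 2 (passing to $\sigma_u$).} Now I estimate the difference
\begin{align*}
\int_\Omega \sigma_u v\,dx - \int_\Omega(\sigma_{\bar u}+\pi_v)v\,dx = \int_\Omega\big(\sigma_u-\sigma_{\bar u}-\pi_v\big)v\,dx.
\end{align*}
By Hölder's inequality this is bounded in absolute value by $|\sigma_u-\sigma_{\bar u}-\pi_v|_{L^\infty(\Omega)}\,|v|_{L^1(\Omega)}$, and Proposition \ref{sigmatilde} controls the first factor by $c|v|_{L^r(\Omega)}^2$. The crucial technical point, and the main obstacle, is that the right-hand side of the target estimate is measured in the $L^1$-norm, whereas the quadratic remainder naturally comes in the $L^r$-norm. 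Here I would exploit that every admissible $u$ lies in the bounded set $\mathcal U$, so $v$ is bounded in $L^\infty(\Omega)$ by $|b_2-b_1|_{L^\infty(\Omega)}=:R$; interpolation then gives $|v|_{L^r(\Omega)}^r\le R^{r-1}|v|_{L^1(\Omega)}$, hence $|v|_{L^r(\Omega)}^2\le R^{2(r-1)/r}\rho^{2/r}$. Since Assumption \ref{A2} restricts $k^*<4/n\le 2$ and $r>n/2$ can be chosen close to $n/2$ (so that $2/r$ is close to $4/n$ and exceeds $k^*+1$ strictly once $r$ is small enough, using $k^*+1<4/n+1$ is not quite enough—one must check $2/r+1>k^*+1$, i.e. $2/r>k^*$, achievable since $k^*<4/n$), the remainder term is of order $\rho^{1+2/r}$, a strictly higher power of $\rho$ than $\rho^{k^*+1}$.

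\textbf{Conclusion.} Combining the two steps, for $\rho\le\alpha_0$,
\begin{align*}
\int_\Omega \sigma_u v\,dx \ge \gamma_0\,\rho^{k^*+1} - c\,R^{2(r-1)/r}\rho^{1+2/r} = \rho^{k^*+1}\Big(\gamma_0 - c\,R^{2(r-1)/r}\rho^{2/r-k^*}\Big).
\end{align*}
Because the exponent $2/r-k^*$ is positive for the chosen $r$, I can select $\alpha\in(0,\alpha_0]$ small enough that the parenthesis exceeds $\gamma_0/2$ whenever $\rho\le\alpha$; then the proposition holds with $\gamma:=\gamma_0/2$. The delicate part is the bookkeeping of exponents in Step 2 to guarantee $2/r>k^*$; I expect this to hinge tightly on the hypothesis $k^*\in[1,4/n)$ together with the freedom to take $r$ arbitrarily close to $n/2$, and I would verify that admissible choices of $r$ indeed exist before fixing one.
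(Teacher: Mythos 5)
Your proposal is correct and follows essentially the same route as the paper's proof: the same decomposition $\int_\Omega \sigma_u v\,dx = \int_\Omega(\sigma_{\bar u}+\pi_v)v\,dx + \int_\Omega(\sigma_u-\sigma_{\bar u}-\pi_v)v\,dx$, the same use of Proposition \ref{opQ} to identify the first term with the left-hand side of Assumption \ref{A2}, and the same control of the remainder via Proposition \ref{sigmatilde} together with the $L^\infty$-boundedness of $\mathcal U$ to convert $|v|_{L^r(\Omega)}^2$ into $|v|_{L^1(\Omega)}^{2/r}$, with $r\in(n/2,2/k^*)$ chosen exactly as in the paper so that $2/r>k^*$. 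The paper merely fixes this $r$ at the outset and absorbs the interpolation into the constant in its inequality (\ref{inevarlr}), arriving at the same conclusion with $\gamma=\gamma_0/2$.
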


\begin{proof}{}
	Let $\alpha_0,\gamma_0$ and $k^*$ be the positive numbers in Assumption \ref{A2}. Fix $r\in(n/2,2/{k^*})$. 
	Using Proposition \ref{sigmatilde}, a constant $c>0$ can be found such that
	\begin{align}\label{inevarlr}
	|\sigma_u-\sigma_{\bar u}-\pi_{u-\bar u}|_{L^\infty(\Omega)}\le c|u-\bar u|_{L^1(\Omega)}^{2/r}\quad\quad 
	\forall u\in\mathcal U.
	\end{align}
	From Proposition \ref{opQ} and Assumption \ref{A2}, we have
	\begin{align}\label{intpr}
	\int_{\Omega}\Big[\sigma_{\bar u}+\pi_{u-\bar u}\Big](u-\bar u)\, dx=
	\int_{\Omega}\sigma_{\bar u}(u-\bar u)\,dx+\Lambda(u-\bar u)\ge \gamma_0 |u-\bar u|_{L^1(\Omega)}^{{k^*}+1}
	\end{align}
	for all $u\in\mathcal U$ with $|u-\bar u|_{L^1(\Omega)}\le\alpha_0$. Define $\gamma:=\gamma_0/2$  and  
	\begin{align*}
	\alpha:=\min\left\lbrace \alpha_0,\gamma^{\frac{r}{2-{k^*}r}} c^{-{\frac{r}{2-{k^*}r}}}\right\rbrace.
	\end{align*}
	Then, by (\ref{inevarlr})
	\begin{align}\label{inevarlr1}
	|\sigma_u-\sigma_{\bar u}-\pi_{u-\bar u}|_{L^\infty(\Omega)}\le c|u-\bar u|_{L^1(\Omega)}^{\frac{2}{r}}= 
	c|u-\bar u|_{L^1(\Omega)}^{\frac{2}{r}-{k^*}}|u-\bar u|_{L^1(\Omega)}^{k^*}\le 
	\gamma|u-\bar u|_{L^1(\Omega)}^{k^*}
	\end{align}
	for all $u\in\mathcal U$ with $|u-\bar u|_{L^1(\Omega)}\le\alpha$. We have for all $u\in\mathcal U$
	\begin{align*}
	\int_{\Omega}\sigma_{u}(u-\bar u)\,dx&=\int_{\Omega} \Big[\sigma_{\bar u}+\pi_{u-\bar u}\Big](u-\bar u)\,dx+\int_{\Omega} \Big[\sigma_{u}-\sigma_{\bar u}-\pi_{u-\bar u}\Big](u-\bar u)\,dx.
	\end{align*}
	Consequently, by  (\ref{intpr}) and (\ref{inevarlr1}), 
	\begin{align*}
	\int_{\Omega}\sigma_{u}(u-\bar u)\,dx&\ge\gamma_0 |u-\bar u|_{L^1(\Omega)}^{{k^*}+1}-|
	\sigma_{u}-\sigma_{\bar u}-\pi_{u-\bar u}|_{L^\infty(\Omega)}|u-\bar u|_{L^1(\Omega)}\\
	&\ge(\gamma_0-\gamma)|u-\bar u|_{L^1(\Omega)}^{{k^*}+1}=\gamma|u-\bar u|_{L^1(\Omega)}^{{k^*}+1}
	\end{align*}
	for all $u\in\mathcal U$ with $|u-\bar u|_{L^1(\Omega)}\le\alpha$.
\end{proof}

\subsection{Some existence and stability results}

We now pass to some preparatory lemmas concerning the existence of solutions of inclusions (also called generalized equations, see \cite{Robinsongenequ1}) related to 
the first order necessary condition of problem (\ref{cost})--(\ref{system}). Given $r\in[1,\infty]$, we denote by 
$\mathbb B_{L^r}(c;\alpha)$ the closed ball in $L^r(\Omega)$ with center $c\in L^r(\Omega)$ and radius $\alpha>0$.

The variational inequality (\ref{varin}) can be written as the inclusion
\begin{align}\label{Inclusion}
	0\in\sigma_u+ N_{\mathcal U}(u),
\end{align}
where the normal cone at $u$ to the set $\mathcal U$ is given by
\begin{align*}
	N_{\mathcal U}(u)=\left\lbrace \sigma\in L^\infty(\Omega): \int_{\Omega}\sigma(w-u)\, dx\le 0\quad \forall w\in\mathcal U\right\rbrace.
\end{align*}

\begin{lmm}\label{Lemainclusion0}
	For all $\rho\in {L^\infty(\Omega)}$ and $\varepsilon>0$ there exists 
	$u\in \mathcal U\cap \mathbb B_{L^1}(\bar u;\varepsilon)$ satisfying 
	$$
	\rho\in\sigma_u+ N_{\mathcal U\cap \hspace*{0.02cm}\mathbb B_{L^1}(\bar u;\varepsilon)}(u).
	$$
\end{lmm}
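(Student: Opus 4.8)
The plan is to realise the desired inclusion as the first-order optimality condition of an auxiliary minimization problem. Introduce the convex set $\mathcal C:=\mathcal U\cap\mathbb B_{L^1}(\bar u;\varepsilon)$ and the perturbed functional $\mathcal J_\rho(u):=\mathcal J(u)-\int_\Omega\rho u\,dx$. Unwinding the definition of the normal cone, the inclusion $\rho\in\sigma_u+N_{\mathcal C}(u)$ is equivalent to the variational inequality
\begin{align*}
\int_\Omega(\sigma_u-\rho)(w-u)\,dx\ge0\qquad\forall w\in\mathcal C.
\end{align*}
Since $\sigma_u$ is precisely the reduced gradient of $\mathcal J$, so that $d\mathcal J(u;v)=\int_\Omega\sigma_u v\,dx$ (this is exactly the content of the stationarity inequality (\ref{varin}), and follows from the G\^ateaux differentiability of Section \ref{Section diff} together with the integration by parts formula (\ref{intbyparts})), the displayed inequality is the first-order necessary condition for a minimizer of $\mathcal J_\rho$ over the convex set $\mathcal C$. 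It therefore suffices to show that $\mathcal J_\rho$ attains its minimum on $\mathcal C$, and to take $u$ to be such a minimizer.

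First I would check that $\mathcal C$ is weakly sequentially compact in $L^2(\Omega)$. It is convex and bounded; it is strongly closed because $L^2(\Omega)\hookrightarrow L^1(\Omega)$ continuously (as $\Omega$ is bounded), so $L^2$-convergence forces $L^1$-convergence and the constraint $|u-\bar u|_{L^1(\Omega)}\le\varepsilon$ passes to the limit, while $\mathcal U$ is itself closed, convex and bounded. A convex, strongly closed, bounded subset of the reflexive space $L^2(\Omega)$ is weakly sequentially compact and, being weakly closed, contains the weak limits of its sequences. Now I would run the direct method: take a minimizing sequence $\{u_m\}\subset\mathcal C$ for $\mathcal J_\rho$ and extract a subsequence $u_{m_k}\rightharpoonup u^*\in\mathcal C$ weakly in $L^2(\Omega)$. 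By Proposition \ref{weakconvergence}, $y_{u_{m_k}}\to y_{u^*}$ in $C(\bar\Omega)$; arguing exactly as in the proof of Lemma \ref{globalsol} (splitting the term $\int_\Omega s(\cdot,y_{u_{m_k}})u_{m_k}\,dx$ and combining dominated convergence with the weak convergence $u_{m_k}\rightharpoonup u^*$) gives $\mathcal J(u_{m_k})\to\mathcal J(u^*)$. As $\rho\in L^\infty(\Omega)\subset L^2(\Omega)$, the linear term obeys $\int_\Omega\rho u_{m_k}\,dx\to\int_\Omega\rho u^*\,dx$, whence $\mathcal J_\rho(u_{m_k})\to\mathcal J_\rho(u^*)$ and $u^*$ minimizes $\mathcal J_\rho$ on $\mathcal C$.

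It remains to extract the optimality condition. For any $w\in\mathcal C$, convexity of $\mathcal C$ gives $u^*+t(w-u^*)\in\mathcal C$ for $t\in[0,1]$, and minimality together with the differentiability of $\mathcal J$ yields
\begin{align*}
0\le\frac{d}{dt}\Big|_{t=0^+}\mathcal J_\rho\big(u^*+t(w-u^*)\big)=\int_\Omega(\sigma_{u^*}-\rho)(w-u^*)\,dx.
\end{align*}
This is the variational inequality above with $u=u^*$, i.e. $\rho\in\sigma_{u^*}+N_{\mathcal C}(u^*)$; note that $\sigma_{u^*}\in L^\infty(\Omega)$ (as $y_{u^*},p_{u^*}\in C(\bar\Omega)$), so $\rho-\sigma_{u^*}\in L^\infty(\Omega)$ and membership in the normal cone is meaningful. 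Taking $u=u^*\in\mathcal U\cap\mathbb B_{L^1}(\bar u;\varepsilon)$ gives the sought point.

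The one step requiring genuine care — the main obstacle — is the weak lower semicontinuity (in fact weak continuity) of $\mathcal J_\rho$: the bilinear term $\int_\Omega s(\cdot,y_u)u\,dx$ couples the weakly convergent controls with the states, so passing to the limit relies essentially on the \emph{strong} $C(\bar\Omega)$-convergence of the states supplied by Proposition \ref{weakconvergence}, rather than on mere weak convergence. Everything else — the weak compactness of $\mathcal C$, the reduced-gradient formula for $\mathcal J$, and the first-order condition on a convex set — is routine.
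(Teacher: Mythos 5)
Your proposal is correct and follows essentially the same route as the paper: minimize the perturbed functional $\mathcal J_\rho(u)=\mathcal J(u)-\int_\Omega\rho u\,dx$ over $\mathcal U\cap\mathbb B_{L^1}(\bar u;\varepsilon)$ via the direct method (weak sequential compactness of the constraint set plus weak sequential continuity of $\mathcal J_\rho$, the latter resting on Proposition \ref{weakconvergence}), and then read off the inclusion from the first-order optimality condition, i.e.\ the Pontryagin-type variational inequality with switching function $\sigma_{u^*}-\rho$. You merely spell out details the paper leaves implicit (closedness of the intersection, the adjoint-based gradient formula, the one-sided derivative argument), which is fine.
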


\begin{proof}
	Let $\rho\in {L^\infty(\Omega)}$ and $\varepsilon>0$. Consider the functional $\mathcal J_{\rho}:\mathcal U\cap \mathbb B_{L^1}(\bar u;\varepsilon)\to\mathbb R$ given by
	\begin{align*}
	\mathcal J_{\rho}(u):=\int_{\Omega} \big[g(y_u,u)-\rho u\big]\, dx=\mathcal J(u)-\int_{\Omega}\rho u\,dx.
	\end{align*}
	The functional $\mathcal J_\rho$ has at least one global minimizer $ u_\rho\in\mathcal U\cap \mathbb B_{L^1}(\bar u;\varepsilon)$ since 
	$\mathcal U\cap \mathbb B_{L_1}(\bar u;\varepsilon)$ is a weakly sequentially compact subset of $L^2(\Omega)$ 
	and $\mathcal J_{\rho}$ is weakly sequentially continuous. By the Pontryagin principle, 
	\begin{align*}
	\int_{\Omega}\big[\sigma_{u_\rho}-\rho\big](u-u_\rho)\, dx\ge0\quad \forall u\in\mathcal U\cap \mathbb B_{L^1}(\bar u;\varepsilon).
	\end{align*}
	We have then that $u_\rho$ satisfies 
	$\rho\in\sigma_{u_\rho}+ N_{\mathcal U\cap \hspace*{0.02cm}\mathbb B_{L^1}(\bar u;\varepsilon)}(u_\rho)$.
\end{proof}

\begin{lmm}\label{coneint}
	Let $\mathcal V_1$ and $\mathcal V_2$ be closed and convex subsets of $L^1(\Omega)$ 
	such that $\mathcal V_1\cap\text{int}\hspace*{0.07cm}\mathcal V_2\neq\emptyset$. Then
	\begin{align}\label{EdecomN}
	N_{\mathcal V_1\cap\mathcal V_2}(u)= N_{\mathcal V_1}(u)+N_{\mathcal V_2}(u)
	\end{align}
	for all $u\in\mathcal V_1\cap\mathcal V_2$.
\end{lmm}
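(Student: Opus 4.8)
The plan is to prove the nontrivial inclusion in (\ref{EdecomN}) by a Hahn--Banach separation argument, the reverse inclusion being elementary. Fix $u\in\mathcal V_1\cap\mathcal V_2$. For the inclusion $N_{\mathcal V_1}(u)+N_{\mathcal V_2}(u)\subset N_{\mathcal V_1\cap\mathcal V_2}(u)$, take $\sigma_i\in N_{\mathcal V_i}(u)$, $i=1,2$; for any $w\in\mathcal V_1\cap\mathcal V_2$ both $\int_\Omega\sigma_1(w-u)\,dx\le0$ and $\int_\Omega\sigma_2(w-u)\,dx\le0$, so their sum is nonpositive, whence $\sigma_1+\sigma_2\in N_{\mathcal V_1\cap\mathcal V_2}(u)$. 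I also record that each $N_{\mathcal V_i}(u)$ is a convex cone, a fact used in the normalization below.

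For the hard inclusion, let $\sigma\in N_{\mathcal V_1\cap\mathcal V_2}(u)$. I would work in the Banach space $X:=L^1(\Omega)\times\mathbb R$, whose dual is $L^\infty(\Omega)\times\mathbb R$ since $\Omega$ has finite measure. Introduce the convex sets
\[
C_1:=\Big\{(w-u,\rho): w\in\mathcal V_1,\ \rho\le\textstyle\int_\Omega\sigma(w-u)\,dx\Big\},\qquad
C_2:=\big\{(w-u,\rho): w\in\mathcal V_2,\ \rho\ge0\big\}.
\]
The Slater-type hypothesis $\mathcal V_1\cap\operatorname{int}\mathcal V_2\neq\emptyset$ guarantees that $C_2$ has nonempty interior, namely $\{(w-u,\rho):w\in\operatorname{int}\mathcal V_2,\ \rho>0\}$. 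The defining inequality of the normal cone at $u$ shows $C_1\cap\operatorname{int}C_2=\emptyset$: a common point would yield $w\in\mathcal V_1\cap\mathcal V_2$ with $0<\rho\le\int_\Omega\sigma(w-u)\,dx\le0$, a contradiction. Hahn--Banach separation then provides $(\eta,\lambda)\in L^\infty(\Omega)\times\mathbb R$, not both zero, and a constant $c$ with $\int_\Omega\eta v\,dx+\lambda\rho\le c\le\int_\Omega\eta v'\,dx+\lambda\rho'$ for all $(v,\rho)\in C_1$, $(v',\rho')\in C_2$; testing at the common point $(0,0)\in C_1\cap C_2$ fixes $c=0$, while letting $\rho\to+\infty$ in $C_2$ forces $\lambda\ge0$.

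Reading off the separation inequalities gives $-\eta\in N_{\mathcal V_2}(u)$ (from $C_2$ with $\rho=0$) and $\eta+\lambda\sigma\in N_{\mathcal V_1}(u)$ (from $C_1$ with $\rho=\int_\Omega\sigma(w-u)\,dx$). I expect the only genuine obstacle to be excluding the degenerate case $\lambda=0$, that is, proving that the separating functional is not ``vertical.'' If $\lambda=0$, then $\eta\in N_{\mathcal V_1}(u)$ and $-\eta\in N_{\mathcal V_2}(u)$; evaluating both at a point $u_0\in\mathcal V_1\cap\operatorname{int}\mathcal V_2$ gives $\int_\Omega\eta(u_0-u)\,dx=0$, and the interiority of $u_0$ in $\mathcal V_2$ lets me perturb $u_0$ by an arbitrary small $L^1$ direction $h$ to conclude $\int_\Omega\eta h\,dx\ge0$, hence $\int_\Omega\eta h\,dx=0$ for all $h$ in a ball, so $\eta=0$, contradicting $(\eta,\lambda)\neq0$. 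Thus $\lambda>0$; dividing by $\lambda$ and using that the normal cones are cones, I set $\sigma_1:=\sigma+\eta/\lambda\in N_{\mathcal V_1}(u)$ and $\sigma_2:=-\eta/\lambda\in N_{\mathcal V_2}(u)$, which satisfy $\sigma=\sigma_1+\sigma_2$ and complete the proof.
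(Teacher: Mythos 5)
Your proof is correct, but it follows a genuinely different route from the paper. The paper's proof is a short citation-based argument: it introduces the support functions $s_{\mathcal V_1}, s_{\mathcal V_2}$, notes that by a result in the cited reference the interiority hypothesis implies that $\mathrm{Epi}\, s_{\mathcal V_1}+\mathrm{Epi}\, s_{\mathcal V_2}$ is weak${}^*$ closed in $L^\infty(\Omega)$, and then invokes a theorem from that same reference which converts this closedness qualification into the normal cone intersection formula. You instead give a self-contained Hahn--Banach separation argument in $L^1(\Omega)\times\mathbb R$: your sets $C_1$ (the hypograph of the affine functional $w\mapsto\int_\Omega\sigma(w-u)\,dx$ over $\mathcal V_1-u$) and $C_2=(\mathcal V_2-u)\times[0,\infty)$ are convex, $C_2$ has interior by the Slater hypothesis, the disjointness $C_1\cap\operatorname{int}C_2=\emptyset$ encodes $\sigma\in N_{\mathcal V_1\cap\mathcal V_2}(u)$, and the separating functional $(\eta,\lambda)$ is correctly decoded into the two normal cones; your exclusion of the vertical case $\lambda=0$ via the interior point $u_0$ is exactly the place where the constraint qualification does its work, and it is sound (the perturbation $u_0+h$ over a symmetric $L^1$-ball forces $\eta=0$, contradicting nontriviality). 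In effect you reprove the Moreau--Rockafellar-type sum rule for indicator functions rather than quote it. What each approach buys: the paper's argument is shorter and rests on a qualification (weak${}^*$ closedness of the sum of epigraphs) that is in principle weaker and more flexible than interiority; yours is elementary, needs no external reference, and makes visible exactly how the hypothesis $\mathcal V_1\cap\operatorname{int}\mathcal V_2\neq\emptyset$ is consumed. The only points you gloss over are routine: that separation of $C_1$ from $\operatorname{int}C_2$ extends to $C_2$ by density of $\operatorname{int}C_2$ in $C_2$ for convex sets with nonempty interior, and the identification $(L^1(\Omega)\times\mathbb R)^*=L^\infty(\Omega)\times\mathbb R$, both standard.
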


\begin{proof}
	Given a set $\mathcal W \subset L^1(\Omega)$,  let $s_\mathcal W : L^\infty(\Omega) \to \mathbb R\cup\{+\infty\}$ 
	denote the support function to $\mathcal W$, that is 
	\begin{align*}
	s_\mathcal W(h) := \sup_{w \in \mathcal W} \int_{\Omega}hw\, dx.
	\end{align*}
	By \cite[Proposition 3.1]{Coneintersection}, the set 
	$\text{Epi}\hspace{0.07cm} s_{\mathcal V_1} + \text{Epi}\hspace{0.07cm} s_{\mathcal V_2}$ is a weakly${}^*$ 
	closed subset of $L^\infty(\Omega)$.
	{Then the representation (\ref{EdecomN}) holds according 
		to \cite[Theorem 3.1]{Coneintersection}.}
\end{proof}

We can now prove existence of solutions of the inclusion $\rho\in\sigma_u+N_{\mathcal U}(u)$ 
that are close (in the $L^1$-norm) to $\bar u$ whenever $\rho$ is close to zero 
(in the norm $L^\infty$-norm). The proof follows the arguments in \cite[p. 1127]{Bim19}.

\begin{lmm}\label{exisrho}
	Let Assumption \ref{A2} hold. Then for each $\varepsilon>0$ there exists $\delta>0$ such that for each 
	$\rho\in \mathbb B_{L^\infty}(0;\delta)$ there exists $u\in \mathcal U\cap \mathbb B_{L^1}(\bar u;\varepsilon)$ 
	satisfying $\rho\in\sigma_u+ N_{\mathcal U}(u)$.
\end{lmm}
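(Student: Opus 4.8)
The plan is to solve a localized form of the inclusion and then show that, for $\rho$ small, the localizing ball constraint becomes inactive. Since a solution contained in a smaller ball automatically lies in every larger one, it suffices to treat $\varepsilon\le\alpha$, where $\alpha$ is the radius provided by Proposition \ref{propofequiv}; I therefore assume this throughout.

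First I would apply Lemma \ref{Lemainclusion0} to the given $\rho\in\mathbb B_{L^\infty}(0;\delta)$ (with $\delta$ to be fixed below), obtaining $u_\rho\in\mathcal U\cap\mathbb B_{L^1}(\bar u;\varepsilon)$ with $\rho\in\sigma_{u_\rho}+N_{\mathcal U\cap\mathbb B_{L^1}(\bar u;\varepsilon)}(u_\rho)$. By the definition of the normal cone this reads $\int_\Omega(\rho-\sigma_{u_\rho})(w-u_\rho)\,dx\le0$ for all $w\in\mathcal U\cap\mathbb B_{L^1}(\bar u;\varepsilon)$. The admissible test element $w=\bar u$ then gives $\int_\Omega\sigma_{u_\rho}(u_\rho-\bar u)\,dx\le\int_\Omega\rho(u_\rho-\bar u)\,dx\le|\rho|_{L^\infty(\Omega)}\,|u_\rho-\bar u|_{L^1(\Omega)}$.

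Next, the crucial step couples this with the growth estimate of Proposition \ref{propofequiv}: because $|u_\rho-\bar u|_{L^1(\Omega)}\le\varepsilon\le\alpha$, that proposition yields $\gamma|u_\rho-\bar u|_{L^1(\Omega)}^{k^*+1}\le\int_\Omega\sigma_{u_\rho}(u_\rho-\bar u)\,dx$, whence $\gamma|u_\rho-\bar u|_{L^1(\Omega)}^{k^*+1}\le|\rho|_{L^\infty(\Omega)}\,|u_\rho-\bar u|_{L^1(\Omega)}$. If $u_\rho\ne\bar u$, dividing by $|u_\rho-\bar u|_{L^1(\Omega)}$ and using $|\rho|_{L^\infty(\Omega)}\le\delta$ gives $|u_\rho-\bar u|_{L^1(\Omega)}^{k^*}\le\delta/\gamma$; the case $u_\rho=\bar u$ is trivial. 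Hence, choosing any $\delta<\gamma\varepsilon^{k^*}$ forces $|u_\rho-\bar u|_{L^1(\Omega)}<\varepsilon$, so that $u_\rho$ lies in the interior of the ball.

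Finally, I would discard the ball constraint. As $\bar u\in\mathcal U\cap\text{int}\,\mathbb B_{L^1}(\bar u;\varepsilon)$, Lemma \ref{coneint} applies with $\mathcal V_1=\mathcal U$ and $\mathcal V_2=\mathbb B_{L^1}(\bar u;\varepsilon)$ and yields $N_{\mathcal U\cap\mathbb B_{L^1}(\bar u;\varepsilon)}(u_\rho)=N_{\mathcal U}(u_\rho)+N_{\mathbb B_{L^1}(\bar u;\varepsilon)}(u_\rho)$. Since $u_\rho$ is interior to the ball, $N_{\mathbb B_{L^1}(\bar u;\varepsilon)}(u_\rho)=\{0\}$, so the two normal cones coincide and the localized inclusion reduces to $\rho\in\sigma_{u_\rho}+N_{\mathcal U}(u_\rho)$; the point $u=u_\rho$ is then the required solution. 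I expect the main obstacle to be the calibration in the third step: one must choose a single threshold $\delta$, depending only on $\varepsilon$ (through $\gamma$ and $k^*$) and not on the particular $\rho$, so as to guarantee that the localized solution is driven strictly into the interior of the ball, and it is precisely the choice $\delta<\gamma\varepsilon^{k^*}$ that makes this work.
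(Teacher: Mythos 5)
Your proof is correct and follows essentially the same route as the paper: solve the localized inclusion via Lemma \ref{Lemainclusion0}, choose $\delta$ of order $\gamma\varepsilon^{k^*}$ so that Proposition \ref{propofequiv} forces the localized solution strictly inside the ball, and then discard the ball constraint with Lemma \ref{coneint}. The only cosmetic difference is that you obtain the key inequality by testing the localized inclusion directly with $w=\bar u$, whereas the paper first decomposes the normal cone via Lemma \ref{coneint} and bounds the resulting term $\int_\Omega\nu(\bar u-u)\,dx\le0$ separately; the two derivations are equivalent.
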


\begin{proof}
	Let $\alpha$ and $\gamma$ be the numbers in Proposition \ref{propofequiv}. 
	Define $\varepsilon_0:=\min\{\varepsilon,\alpha\}$ and $\delta:=\varepsilon_0^{k^*}\gamma/2$. 
	Let $\rho\in L^\infty(\Omega)$ with $|\rho|_{L^\infty(\Omega)}\le \delta$. By Lemma \ref{Lemainclusion0}, 
	there exists $u\in\mathcal U\cap \mathbb B_{L^1}(\bar u;\varepsilon_0)$ such that 
	\begin{align*}
	\rho\in\sigma_u+ N_{\mathcal U\cap \mathbb B_{L^1}(\bar u;\varepsilon_0)}(u).
	\end{align*}
	Since trivially $\bar u\in \mathcal U\cap \text{int}\hspace*{0.04cm}\mathbb B_{L^1}(\bar u,\varepsilon_0)$, 
	by Lemma \ref{coneint}  we have
	\begin{align}\label{intcone}
	N_{\mathcal U\cap \mathbb B_{L^1}(\bar u;\varepsilon_0)}(u)= N_{\mathcal U}(u)+
	N_{\mathbb B_{L^1}(\bar u;\varepsilon_0)}(u).
	\end{align}
	Thus there exists $\nu\in N_{\mathbb B_{L^1}(\bar u;\varepsilon_0)}(u)$ such that 
	\begin{align*}
	\rho-\sigma_{u}-\nu\in N_{\mathcal U}(u).
	\end{align*}
	By definition of the normal cone,
	\begin{align}\label{nu0}
	0\ge\int_{\Omega}\big(\rho-\sigma_u\big)(\bar u-u)\,dx-\int_{\Omega}\nu(\bar u- u)\, dx.
	\end{align}
	As $\bar u\in \mathbb B_{L^1}(\bar u;\varepsilon_0)$ and $\nu\in N_{\mathbb B_{L^1}(\bar u;\varepsilon_0)}(u)$, we have  
	\begin{align*}
	\int_{\Omega}\nu(\bar u- u)\, dx\le 0.
	\end{align*}
	Consequently, by (\ref{nu0}) and Proposition \ref{propofequiv}
	\begin{align*}
	0\ge\int_{\Omega}\big(\rho-\sigma_u\big)(\bar u-u)\,dx\ge -|\rho|_{L^\infty(\Omega)}|u-
	\bar u|_{L^1(\Omega)}+\gamma|u-\bar u|_{L^1(\Omega)}^{{k^*}+1},
	\end{align*}
	which implies
	\begin{align*}
	|u-\bar u|_{L^1(\Omega)}\le \gamma^{-\frac{1}{{k^*}}}|\rho|_{L^\infty(\Omega)}^{\frac{1}{{k^*}}}
	\le2^{-\frac{1}{{k^*}}}\varepsilon_0<\varepsilon_0.
	\end{align*}
	As $u\in \text{int}\hspace*{0.07cm} \mathbb B_{L^1}(\bar u;\varepsilon_0)$, we have 
	$N_{\mathbb B_{L^1}(\bar u;\varepsilon_0)}(u)=\left\lbrace 0\right\rbrace $. Thus by (\ref{intcone}),
	\begin{align}
	\rho\in\sigma_u+ N_{\mathcal U\cap \mathbb B_{L^1}(\bar u;\varepsilon_0)}(u)=\sigma_u+ N_{\mathcal U}(u).
	\end{align}
\end{proof}

The following lemma shows how Proposition \ref{propofequiv} (and consequently Assumption \ref{A2}) is related to H\"{o}lder- stability.

\begin{lmm}\label{Esslem}
	Let Assumption \ref{A2} hold. There exist positive numbers $\alpha,\delta$ and $c$ such that for every $\rho\in \mathbb B_{L^\infty}(0;\delta) $ there exists $u\in\mathbb B_{L^1}(\bar u,\alpha)$  satisfying $\rho\in\sigma_u+ N_{\mathcal U}(u)$. Moreover,  \begin{align}\label{holderine}
	|u-\bar u|_{L^1(\Omega)}\le c|\rho|_{L^\infty(\Omega)}^{\frac{1}{{k^*}}}
	\end{align}
	for all  $\rho\in L^\infty(\Omega)$  and $u\in\mathbb B_{L^1}(\bar u;\alpha)$ satisfying $\rho\in\sigma_u+ N_{\mathcal U}(u)$.
\end{lmm}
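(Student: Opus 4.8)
The plan is to combine the growth estimate already secured in Proposition \ref{propofequiv} with the defining inequality of the normal cone. Since the lemma only asks for a Hölder bound valid for $u$ close to $\bar u$ in $L^1(\Omega)$, the radius $\alpha$ and the constant $\gamma$ produced by Proposition \ref{propofequiv} will be inherited directly, and no new hard analysis is needed.

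First I would unfold the inclusion $\rho\in\sigma_u+N_{\mathcal U}(u)$. It asserts $\rho-\sigma_u\in N_{\mathcal U}(u)$, so that by the definition of the normal cone
\begin{align*}
\int_{\Omega}(\rho-\sigma_u)(w-u)\,dx\le 0\qquad\forall w\in\mathcal U.
\end{align*}
Because $\bar u\in\mathcal U$, I would test this inequality with the admissible choice $w=\bar u$, obtaining after rearrangement
\begin{align*}
\int_{\Omega}\sigma_u(u-\bar u)\,dx\le\int_{\Omega}\rho(u-\bar u)\,dx\le|\rho|_{L^\infty(\Omega)}\,|u-\bar u|_{L^1(\Omega)},
\end{align*}
the last step being Hölder's inequality.

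Next, taking $\alpha$ and $\gamma$ to be exactly the numbers furnished by Proposition \ref{propofequiv}, the hypothesis $u\in\mathbb B_{L^1}(\bar u;\alpha)$ (together with $u\in\mathcal U$, which is implicit in the normal-cone inclusion) lets me invoke the growth estimate
\begin{align*}
\gamma\,|u-\bar u|_{L^1(\Omega)}^{k^*+1}\le\int_{\Omega}\sigma_u(u-\bar u)\,dx.
\end{align*}
Chaining the two displays gives $\gamma\,|u-\bar u|_{L^1(\Omega)}^{k^*+1}\le|\rho|_{L^\infty(\Omega)}\,|u-\bar u|_{L^1(\Omega)}$. Dividing by $|u-\bar u|_{L^1(\Omega)}$ (the case $u=\bar u$ makes the asserted bound trivially true) and then raising to the power $1/k^*$ yields $|u-\bar u|_{L^1(\Omega)}\le\gamma^{-1/k^*}|\rho|_{L^\infty(\Omega)}^{1/k^*}$, so the claim holds with $c:=\gamma^{-1/k^*}$ and the same $\alpha$ as in Proposition \ref{propofequiv}.

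There is no genuine obstacle here; the essential work was already done in passing from Assumption \ref{A2} to the growth of the switching mapping in Proposition \ref{propofequiv}. The only points requiring a little care are the choice of the test function $w=\bar u$, which is legitimate precisely because $\bar u\in\mathcal U$; the separate treatment of the degenerate case $u=\bar u$ before cancelling a common factor of $|u-\bar u|_{L^1(\Omega)}$; and the bookkeeping that forces $\alpha$ to be no larger than the radius in Proposition \ref{propofequiv}, so that the growth inequality is actually applicable on the ball in question.
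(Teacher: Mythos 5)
Your proposal is correct and is essentially identical to the paper's own proof: both test the normal-cone inequality with $w=\bar u$, invoke Proposition \ref{propofequiv} on the ball of radius $\alpha$ furnished by that proposition, and cancel a factor of $|u-\bar u|_{L^1(\Omega)}$ to obtain the bound with $c=\gamma^{-1/k^*}$. Your explicit handling of the degenerate case $u=\bar u$ is a minor point of care that the paper leaves implicit.
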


\begin{proof}
	The existence part follows from Lemma \ref{exisrho}.
	Let $\alpha$ and $\gamma$ be the positive numbers in Proposition \ref{propofequiv}.
	Since $\rho-\sigma_u\in N_{\mathcal U}(u)$,	we have 
	\begin{align*}
	\int_{\Omega}(\rho-\sigma_{u})(\bar u-u)\,dx\le0.
	\end{align*}
	By Proposition \ref{propofequiv},
	\begin{align*}
	0&\ge\int_{\Omega}(\rho-\sigma_{u})(\bar u- u) 
	\,dx=\int_{\Omega} \sigma_{u}(u-\bar u)\,dx+
	\int_{\Omega}\rho(\bar u- u)\,dx\\
	&\ge \gamma\left( \int_{\Omega}|u-\bar u|\,dx\right)^{{k^*}+1}-|\rho|_{L^\infty(\Omega)}\int_{\Omega}|u-\bar u|\,dx.
	\end{align*}
	Hence
	\begin{align*}
	\displaystyle\int_{\Omega}|u-\bar u|\,dx\le\Big(\frac{1}{\gamma}|\rho|_{L^\infty(\Omega)}\Big)^{1/{k^*}}= 
	\gamma^{-\frac{1}{{k^*}}}|\rho|_{L^\infty(\Omega)}^{\frac{1}{{k^*}}}.
	\end{align*}
	The result follows defining $c=\gamma^{-\frac{1}{{k^*}}}$.
\end{proof}

For inequality (\ref{holderine}) to hold, Lemma \ref{Esslem} requires that the controls are close in the $L^1$-norm to the reference solution (by Lemma \ref{exisrho}, the existence of such controls is guaranteed). This closeness assumption on the controls can be removed if the solution of inclusion (\ref{Inclusion}) is unique. In particular, if  (\ref{Inclusion}) has a unique solution, then problem (\ref{cost})-(\ref{system}) has unique optimal control (minimizer).

\begin{lmm}\label{Esslemuni}
	Let Assumption \ref{A2} hold, and suppose additionally   $0\in \sigma_{ u}+N_{\mathcal U}(u)$ has a unique solution $\bar u\in\mathcal U$. There exist positive numbers $\delta$ and $c$ such that 
	\begin{align*}
	|u-\bar u|_{L^1(\Omega)}\le c|\rho|_{L^\infty(\Omega)}^{\frac{1}{{k^*}}}.
	\end{align*}
	for all  $\rho\in \mathbb B_{L^\infty}(0;\delta)$  and $u\in\mathcal U$  satisfying $\rho\in\sigma_u+ N_{\mathcal U}(u)$.
\end{lmm}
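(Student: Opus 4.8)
The plan is to bootstrap the localized estimate of Lemma \ref{Esslem} using the uniqueness hypothesis. Lemma \ref{Esslem} already gives the Hölder bound \eqref{holderine}, but only for solutions lying a priori in some ball $\mathbb B_{L^1}(\bar u;\alpha)$. Hence it suffices to produce a radius $\delta>0$ such that, for every $\rho\in\mathbb B_{L^\infty}(0;\delta)$, every solution $u\in\mathcal U$ of $\rho\in\sigma_u+N_{\mathcal U}(u)$ is automatically forced into $\mathbb B_{L^1}(\bar u;\alpha)$, with $\alpha$ the radius from Lemma \ref{Esslem}; the conclusion then follows with the same constant $c$. The extra uniqueness assumption enters precisely at this localization step: it is what allows weak-$L^2$ limits of near-solutions to be promoted to strong-$L^1$ convergence toward $\bar u$.

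I would argue by contradiction and compactness. Suppose no such $\delta$ exists. Then there are sequences $\rho_m\to 0$ in $L^\infty(\Omega)$ and $u_m\in\mathcal U$ with $\rho_m\in\sigma_{u_m}+N_{\mathcal U}(u_m)$ but $|u_m-\bar u|_{L^1(\Omega)}>\alpha$ for all $m$. Since $\mathcal U$ is bounded, closed and convex in $L^2(\Omega)$, it is weakly sequentially compact, so after passing to a subsequence $u_m\rightharpoonup u^*$ weakly in $L^2(\Omega)$ for some $u^*\in\mathcal U$. By Proposition \ref{weakconvergence}, $y_{u_m}\to y_{u^*}$ and $p_{u_m}\to p_{u^*}$ in $C(\bar\Omega)$; combined with the local Lipschitz continuity of $s$ in its second variable (Assumption \ref{A1}$(ii)$--$(iii)$) along the uniformly bounded states, and the boundedness of $\beta$, this yields $\sigma_{u_m}\to\sigma_{u^*}$ in $L^\infty(\Omega)$. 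The inclusion for $u_m$ reads $\int_{\Omega}(\sigma_{u_m}-\rho_m)(w-u_m)\,dx\ge 0$ for all $w\in\mathcal U$. Writing $\sigma_{u_m}=\sigma_{u^*}+(\sigma_{u_m}-\sigma_{u^*})$, the two error integrals are bounded by $|\sigma_{u_m}-\sigma_{u^*}|_{L^\infty(\Omega)}$ and $|\rho_m|_{L^\infty(\Omega)}$ times the uniformly bounded $L^1$-norm of $w-u_m$, hence vanish, while $\int_{\Omega}\sigma_{u^*}(w-u_m)\,dx\to\int_{\Omega}\sigma_{u^*}(w-u^*)\,dx$ by weak convergence, since $\sigma_{u^*}\in L^\infty(\Omega)\subset L^2(\Omega)$. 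Passing to the limit gives $\int_{\Omega}\sigma_{u^*}(w-u^*)\,dx\ge 0$ for all $w\in\mathcal U$, that is, $0\in\sigma_{u^*}+N_{\mathcal U}(u^*)$; by the uniqueness hypothesis, $u^*=\bar u$.

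Finally I would upgrade the weak convergence to strong. By Proposition \ref{bangbang} the control $\bar u$ is bang-bang, and weak convergence in $L^2(\Omega)$ implies weak convergence in $L^1(\Omega)$ on the bounded domain $\Omega$ (as $L^\infty(\Omega)\subset L^2(\Omega)$). Lemma \ref{weakimpliesstrong} then forces $u_m\to\bar u=u^*$ strongly in $L^1(\Omega)$, contradicting $|u_m-\bar u|_{L^1(\Omega)}>\alpha$. This establishes the localization, and an application of Lemma \ref{Esslem} completes the proof, with $c=\gamma^{-1/k^*}$ as there. I expect the main obstacle to be the passage to the limit in the generalized equation: one must simultaneously ensure that the perturbation $\rho_m$ and the switching-function error $\sigma_{u_m}-\sigma_{u^*}$ disappear uniformly while the weak limit is retained in the variational inequality, and—decisively—that the limiting inclusion admits $\bar u$ as its only solution, which is the sole place the new hypothesis is used and without which the bang-bang promotion of weak to strong convergence cannot be invoked.
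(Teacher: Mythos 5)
Your proposal is correct and follows essentially the same route as the paper's proof: a contradiction--compactness argument that localizes solutions into the ball of Lemma \ref{Esslem}, passing to a weak $L^2$ limit, identifying the limit as $\bar u$ via the uniqueness hypothesis, and upgrading to strong $L^1$ convergence through Proposition \ref{bangbang} and Lemma \ref{weakimpliesstrong}. The only difference is cosmetic: you spell out the limit passage in the variational inequality (splitting $\sigma_{u_m}$ into $\sigma_{u^*}$ plus a uniformly vanishing error), which the paper compresses into a one-line appeal to Proposition \ref{weakconvergence}.
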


\begin{proof}
	Let $\alpha$ and $c$ be the positive numbers in Lemma \ref{Esslem}.
	First we prove that there exists $\delta>0$ such that if $u\in\mathcal U$ and $\rho\in L^\infty(\Omega)$  
	satisfy $\rho\in\sigma_u+N_{\mathcal U}(u)$ and $|\rho|_{L^\infty(\Omega)}\le\delta$, then 
	$u\in \mathbb B_{L^1}(\bar u;\alpha)$. Suppose not, then there exist sequences 
	$\{\rho_k\}_{k=1}^\infty\subset L^\infty(\Omega)$ and $\{u_{k}\}_{k=1}^\infty\subset \mathcal U$ 
	such that $\rho_k\in \sigma_{u_k}+ N_{\mathcal U}(u_k)$, $\rho_k\to 0$ in $L^\infty(\Omega)$, and $|u_k-\bar u|_{L^1(\Omega)}> \alpha$. 
	Since $\mathcal U$ is weakly sequentially compact in $L^2(\Omega)$, there exists a subsequence of 
	$\{u_k\}_{k=1}^\infty$, denoted in the same way, and $u^*\in\mathcal U$ such that $u_k\rightharpoonup u^*$ 
	weakly in $L^2(\Omega)$. Using Proposition \ref{weakconvergence},  one can see that  
	$\rho_k-\sigma_{u_k}\to \sigma_{u^*}$ in $L^\infty(\Omega)$.  Consequently, 
	as $\rho_k\in\sigma_{u_k}+ N_{\mathcal U}(u_k)$ for all $n\in\mathbb N$, 
	we obtain $0\in\sigma_{u^*}+N_{\mathcal U}(u^*)$. Then, by assumption, $u^*=\bar u$, so $u^*$ 
	is bang-bang. By Lemma \ref{weakimpliesstrong}, we have $u_k\to u^*$ in $L^1(\Omega)$; a contradiction. 
	The result follows from Lemma \ref{Esslem}.
\end{proof}

\subsection{Strong metric subregularity}

Let us begin considering the following system representing the necessary optimality conditions (Pontryagin principle) for problem (\ref{cost})--(\ref{system}):
\begin{align}\label{s1}
\left\{ \begin{array}{cll}
0&=&\mathcal Ly-f(\cdot,y,u),\\
0&=&\mathcal Lp-H_{y}(\cdot,y,p,u),\\
0&\in& H_{u}(\cdot,y,p) + N_{\mathcal U}(u),
\end{array} \right.
\end{align}
If $ u\in\mathcal U$ is a local solution of problem (\ref{cost})--(\ref{system}), then the triple $(y_{u}, p_{u},u)$ is a solution of (\ref{s1}).
	Therefore, the mapping that defines the right-hand side is referred to as the {\em optimality mapping}. In order to give a strict definition and 
	recast system (\ref{s1}) in a functional frame, we introduce the metric spaces
\begin{align*}
\mathcal Y:=D(\mathcal L)\times D(\mathcal L)\times\mathcal U\quad\text{and}\quad \mathcal Z:=L^2(\Omega)\times   
L^2(\Omega)\times L^\infty(\Omega),
\end{align*}
endowed with the following metrics. For $\psi_i=(y_i,p_i,u_i) \in {\mathcal Y}$ and 
$\zeta_i=(\xi_i,\eta_i,\rho_i) \in {\mathcal Z}$, $i\in\{1,2\}$,
\begin{align*}
& d_{\mathcal Y}(\psi_1,\psi_2):=|y_1-y_2|_{L^2(\Omega)}+|p_1-p_2|_{L^2(\Omega)}+|u_1-u_2|_{L^1(\Omega)},\\
& d_{\mathcal Z}(\zeta_1,\zeta_2):=|\xi_1-\xi_2|_{L^2(\Omega)}+|\eta_1-\eta_2|_{L^2(\Omega)}+
|\rho_1-\rho_2|_{L^\infty(\Omega)}.
\end{align*}
Both metrics are shift-invariant. We denote by $\mathbb B_{\mathcal Y}(\psi;\alpha)$ the closed ball in ${\mathcal Y}$, centered 
at $\psi$ and with radius $\alpha$. The notation for the ball $\mathbb B_{\mathcal Z}(\zeta;\alpha)$ is analogous.
{Then the optimality mapping is defined as the set-valued mapping $\Phi:\mathcal Y\twoheadrightarrow\mathcal Z$ } given by
\begin{align}\label{optmapping}
\Phi(y,p,u) =\left( \begin{array}{c}
\mathcal Ly - f(\cdot,y,u) \\
\mathcal Lp- H_y(\cdot,y,p,u) \\
H_u(\cdot,y,p,u)+ N_{\mathcal U}(u)
\end{array} \right) .
\end{align}
{Then the optimality system (\ref{s1}) can be recast as the inclusion }
\begin{align}\label{inc}
0\in\Phi(y,p,u).
\end{align}
Our purpose is to study the stability of system (\ref{s1}), or equivalently of inclusion (\ref{inc}), {with respect to perturbations on 
	the left-hand side}. 
{From now on, we denote $\bar \psi := (\bar y, \bar p, \bar u) = (y_{\bar u},p_{\bar u},\bar u)$ , where $\bar u$ is the fixed local solution of problem (\ref{cost})--(\ref{system}).}

\begin{dfntn}\label{Dsmsr}
	{The} optimality mapping $\Phi:\mathcal Y\twoheadrightarrow \mathcal Z$ is {called} strongly 
	{H\"older subregular} with exponent $\lambda>0$ at $(\bar\psi,0)$ if there exist positive numbers 
	$\alpha_1,\alpha_2$ {and $\kappa$} such that 
	\begin{align}\label{Essr}
	d_{\mathcal Y}(\psi,\bar\psi)\le\kappa d_{\mathcal Z}(\zeta,0)^\lambda
	\end{align}
	for all $\psi\in \mathbb B_{\mathcal Y}(\bar \psi;\alpha_1)$ and $\zeta\in \mathbb B_{\mathcal Z}(0;\alpha_2)$ 
	satisfying $\zeta\in\Phi(\psi)$. 
\end{dfntn} 
{More explicitly, the inequality (\ref{Essr}) reads as 
	\begin{align} \label{EHe}
	|y-y_{\bar u}|_{L^2(\Omega)}+|p-p_{\bar u}|_{L^2(\Omega)}+|u-\bar u|_{L^1(\Omega)}\le
	\kappa\Big( |\xi|_{L^2(\Omega)}+|\eta|_{L^2(\Omega)}+|\rho|_{L^\infty(\Omega)}\Big)^\lambda.
	\end{align}}

Hence, if the optimality mapping is {strongly H\"older subregular}, all solutions of the system 
\begin{align}\label{s1per}
\left\{ \begin{array}{cll}
\xi&=&\mathcal Ly-f(\cdot,y,u),\\
\eta&=&\mathcal Lp-H_{y}(\cdot,y,p,u),\\
\rho&\in& H_{u}(\cdot,y,p) + N_{\mathcal U}(u).
\end{array} \right.
\end{align}
that are near  $(y_{\bar u},p_{\bar u},\bar u)$  satisfy the H\"older estimate {(\ref{EHe})} 
with respect to the perturbations {$\zeta = (\xi,\eta,\rho)$}, provided they are small enough. The subregularity property is weaker than the well known strong regularity (see \cite[pp. 178-179]{Dontchevbook}); this allows to relax the assumptions to prove stability.

\begin{rmrk}\label{strictlocal}
	If 
	$\Phi$ is strongly H\"older subregular at $(\bar \psi, 0)$, then from (\ref{Essr}) applied with $\zeta = 0$ we obtain that $\bar \psi$ is the unique solution of (\ref{inc})
	in $B_{\mathcal Y}(\bar \psi;\alpha_1)$, hence $\bar u$ is the unique local solution of problem (\ref{cost})--(\ref{system})
	in this ball. In particular, $\bar u$ is a strict local minimizer.
\end{rmrk}

We are now ready to state our main result.

\begin{thrm}\label{Ssr} 
	{Let Assumption \ref{A2} hold. Then the optimality mapping $\Phi$ is strongly H\"older subregular 
		at $(\bar\psi,0)$ with exponent $\lambda = 1/k^*$.}
\end{thrm}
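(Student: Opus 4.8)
The plan is to reduce the subregularity estimate for the full optimality mapping $\Phi$ to the scalar Hölder estimate for the variational inequality already available in Lemma \ref{Esslem}. Fix $\psi=(y,p,u)\in\mathbb B_{\mathcal Y}(\bar\psi;\alpha_1)$ and $\zeta=(\xi,\eta,\rho)\in\Phi(\psi)$, so that $\mathcal Ly=f(\cdot,y,u)+\xi$, $\mathcal Lp=H_y(\cdot,y,p,u)+\eta$ and $\rho\in H_u(\cdot,y,p)+N_{\mathcal U}(u)$. The first move is to compare the perturbed pair $(y,p)$ with the exact state and costate $(y_u,p_u)$ for the same control $u$. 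Subtracting the state equations and using that $d$ is nondecreasing in its second argument (Assumption \ref{A1}(iv)), the difference $y-y_u$ satisfies $\mathcal L(y-y_u)+a(\cdot)(y-y_u)=\xi$ with $a\ge0$ bounded, exactly as in the proof of Proposition \ref{contstatel1}; since $n\le 3$ one may take $r=2>n/2$ in Lemma \ref{L1e} (because $\xi\in L^2(\Omega)$) and obtain $|y-y_u|_{C(\bar\Omega)}\le c|\xi|_{L^2(\Omega)}$. For the costate I exploit that $H_y$ is affine in $p$, so the $p$-dependence is absorbed into the left-hand side through the term $d_y(\cdot,y)(p-p_u)$ (nonnegative and bounded by Assumption \ref{A1}(ii)--(iv)), leaving only a difference of $H_y$ in the $y$-slot, which the local Lipschitz property of Assumption \ref{A1}(ii) controls by $|y-y_u|$; Lemma \ref{L1e} then yields $|p-p_u|_{C(\bar\Omega)}\le c\big(|\xi|_{L^2(\Omega)}+|\eta|_{L^2(\Omega)}\big)$.

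With these $C(\bar\Omega)$ estimates in hand, the second step transfers the third inclusion onto the exact switching function $\sigma_u=H_u(\cdot,y_u,p_u)$. Writing $H_u(\cdot,y,p)-\sigma_u=\big[s(\cdot,y)-s(\cdot,y_u)\big]+\beta(\cdot)(p-p_u)$ and setting $\tilde\rho:=\rho-\big(H_u(\cdot,y,p)-\sigma_u\big)$, the inclusion $\rho\in H_u(\cdot,y,p)+N_{\mathcal U}(u)$ becomes $\tilde\rho\in\sigma_u+N_{\mathcal U}(u)$. The local Lipschitz continuity of $s$ in $y$ and the boundedness of $\beta$, together with step one, give
\[
|\tilde\rho|_{L^\infty(\Omega)}\le|\rho|_{L^\infty(\Omega)}+c\big(|\xi|_{L^2(\Omega)}+|\eta|_{L^2(\Omega)}\big)\le c\,d_{\mathcal Z}(\zeta,0).
\]
Choosing $\alpha_1$ no larger than the radius $\alpha$ of Lemma \ref{Esslem} guarantees $u\in\mathbb B_{L^1}(\bar u;\alpha)$, so Lemma \ref{Esslem} applies to $\tilde\rho$ and produces
\[
|u-\bar u|_{L^1(\Omega)}\le c\,|\tilde\rho|_{L^\infty(\Omega)}^{1/k^*}\le c\,d_{\mathcal Z}(\zeta,0)^{1/k^*}.
\]

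It remains to estimate the state and costate components of $d_{\mathcal Y}$. By the triangle inequality $|y-y_{\bar u}|_{L^2(\Omega)}\le|y-y_u|_{L^2(\Omega)}+|y_u-y_{\bar u}|_{L^2(\Omega)}$: the first term is bounded by $c|\xi|_{L^2(\Omega)}$ from step one and the embedding $C(\bar\Omega)\hookrightarrow L^2(\Omega)$, while the second is bounded by $c|u-\bar u|_{L^1(\Omega)}$ via Proposition \ref{contstatel1}, with the identical treatment for $p$. Summing the three contributions gives
\[
d_{\mathcal Y}(\psi,\bar\psi)\le c\big(|\xi|_{L^2(\Omega)}+|\eta|_{L^2(\Omega)}\big)+c|u-\bar u|_{L^1(\Omega)}\le c\,d_{\mathcal Z}(\zeta,0)+c\,d_{\mathcal Z}(\zeta,0)^{1/k^*}.
\]
Since $k^*\ge1$ forces $1/k^*\le1$, shrinking $\alpha_2$ below $1$ makes $d_{\mathcal Z}(\zeta,0)\le d_{\mathcal Z}(\zeta,0)^{1/k^*}$, so both terms collapse into a single constant times $d_{\mathcal Z}(\zeta,0)^{1/k^*}$, which is precisely (\ref{Essr}) with $\lambda=1/k^*$. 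I expect the main obstacle to be step one: obtaining $L^\infty$ (not merely $L^2$) control of $y-y_u$ and $p-p_u$, as this is what lets the $L^\infty$ perturbation $\tilde\rho$ feed into Lemma \ref{Esslem}; this is exactly the point where the restriction $n\in\{2,3\}$ is used, guaranteeing the embedding $L^2(\Omega)\hookrightarrow L^r(\Omega)$ for some $r>n/2$ needed in Lemma \ref{L1e}.
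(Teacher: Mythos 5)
Your proof is correct and follows essentially the same route as the paper: your $\tilde\rho$ is exactly the paper's $\nu:=\rho+H_u(\cdot,y_u,p_u)-H_u(\cdot,y,p)$, the key step is the same application of Lemma \ref{Esslem} to the shifted inclusion $\tilde\rho\in\sigma_u+N_{\mathcal U}(u)$, and the state/adjoint components are handled identically via Proposition \ref{contstatel1} and the triangle inequality. The only difference is cosmetic: where the paper invokes ``a standard argument'' for the $L^\infty$ bounds on $y-y_u$ and $p-p_u$, you spell out that argument via Lemma \ref{L1e} with $r=2>n/2$, which is precisely what the paper intends.
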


\begin{proof}
	Let $\alpha$ and  $c$ be the positive numbers in Lemma \ref{Esslem}. 
	Let $\zeta=(\xi,\eta,\rho)\in\mathcal \mathbb B_{\mathcal Z}(0;1)$ and 
	$\psi=(y,p,u)\in \mathbb B_{\mathcal Y}(\bar\psi;\alpha)$ such that  $\zeta\in\Phi(\psi)$. 
	By a standard argument, we can find $c_1>0$ (independent of $\psi$ and $\zeta$) such that
	\begin{align}\label{mt1}
	|y-y_{u}|_{L^\infty(\Omega)}+|p-p_{u}|_{L^\infty(\Omega)}\le c_1\Big(|\xi|_{L^2(\Omega)}+|\eta|_{L^2(\Omega)}\Big).
	\end{align}
	Since $H_u$ is locally Lipschitz uniformly in the first variable, and the sets $\{y_u:u\in\mathcal U\}$, $\{p_u:u\in\mathcal U\}$ 
	are bounded in $C(\bar\Omega)$, there exists $c_2>0$ (independent of $\psi$) such that 
	\begin{align}\label{mt2}
	|H_{u}(\cdot,y,p)-H_{u}(\cdot,y_u,p_u)|_{L^\infty(\Omega)}\le c_2\Big
	(	|y-y_{u}|_{L^\infty(\Omega)}+|p-p_{u}|_{L^\infty(\Omega)}\Big)
	\end{align}
	Define $\nu:=\rho+H_{u}(\cdot,y_u,p_u) - H_{u}(\cdot,y,p).$ By (\ref{mt1}) and (\ref{mt2}), 
	there exists $c_3>0$ (independent of $\psi$ and $\zeta$) such that 
	\begin{align*}
	|\nu|_{L^\infty(\Omega)}\le c_3\Big(|\xi|_{L^2(\Omega)}+|\eta|_{L^2(\Omega)}+
	|\rho|_{L^\infty(\Omega)}\Big)=c_3|\zeta|_{\mathcal Z}.
	\end{align*}
	As $\rho\in H_{u}(\cdot,y,p)+ N_{\mathcal U}(u)$, we have $\nu\in H_{u}(\cdot, y_u,p_u)+ N_{\mathcal U}(u)$. 
	Then by Lemma \ref{Esslem},  
	\begin{align}\label{mt3}
	|u-\bar u|_{L^1(\Omega)}\le c|\nu|_{L^\infty(\Omega)}^{\frac{1}{{k^*}}}\le 
	cc_3^{\frac{1}{{k^*}}}|\zeta|^{\frac{1}{{k^*}}}_{\mathcal Z}:=c_4|\zeta|^{\frac{1}{{k^*}}}_{\mathcal Z}.
	\end{align}
	Now, by Proposition \ref{contstatel1}, there exists $c_5>0$ (independent of $\psi$) such that 
	$|y_{u}-y_{\bar u}|_{L^2(\Omega)}\le c_5|u-\bar u|_{L^1(\Omega)}$. Consequently, by (\ref{mt3})
	\begin{align*}
	|y-y_{\bar u}|_{L^2(\Omega)}&\le |y-y_{ u}|_{L^2(\Omega)}+|y_{u}-y_{\bar u}|_{L^2(\Omega)}\\
	&\le c_1\text{meas}\hspace{0.08cm}\Omega^{\frac{1}{2}}\Big(|\xi|_{L^2(\Omega)}+|\eta|_{L^2(\Omega)}\Big)+
	c_5|u-\bar u|_{L^1(\Omega)}\\
	&\le (c_1\text{meas}\hspace{0.08cm}\Omega^{\frac{1}{2}}+c_5c_4)|\zeta|^{\frac{1}{{k^*}}}_{\mathcal Z}
	=:c_6|\zeta|^{\frac{1}{{k^*}}}_{\mathcal Z}.
	\end{align*}
	Analogously, there exists $c_7>0$ (independent of $\psi$ and $\zeta$) such that
	\begin{align*}
	|p-p_{\bar u}|_{L^2(\Omega)}\le c_7 |\zeta|^{\frac{1}{{k^*}}}_{\mathcal Z}.
	\end{align*}
	Putting all together, 
	\begin{align*}
	|y-y_{\bar u}|_{L^2(\Omega)}+|p-p_{\bar u}|_{L^2(\Omega)}+|u-\bar u|_{L^1(\Omega)}\le 
	(c_4+c_6+c_7)|\zeta|^{\frac{1}{{k^*}}}_{\mathcal Z}.
	\end{align*}
	Finally, let $\alpha_1:=\alpha$, $\alpha_2:=1$ and $\kappa:=c_4+c_6+c_7$. 
	Since the constants $c_4, c_6$ and $c_7$ are independent of $\psi$ and $\zeta$, so is $\kappa$. 
	Thus we have (\ref{Essr})
	for all $\psi\in \mathbb B_{\mathcal Y}(\bar \psi;\alpha_1)$ and $\zeta\in 
	\mathbb B_{\mathcal Z}(0;\alpha_2)$ satisfying $\zeta\in\Phi(\psi)$. 
\end{proof}

{The strong subregularity property defined above does not require existence of solutions of the perturbed 
	inclusion (\ref{s1per}) in a neighborhood of the reference solution $\bar \psi$. 
	The next theorem answers the existence question.} 

\begin{thrm}\label{Ssrexist}
	Let Assumption \ref{A2} hold. For each $\varepsilon>0$ there exists $\delta>0$ such that for every 
	$\zeta\in \mathbb B_{\mathcal Z}(0;\delta)$  there exists $\psi\in \mathbb B_{\mathcal Y}(\bar \psi;\varepsilon)$ 
	satisfying the inclusion $\zeta\in \Phi(\psi)$.
\end{thrm}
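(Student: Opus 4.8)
The plan is to generalize the existence argument of Lemma \ref{exisrho}, which is exactly the special case $\xi=\eta=0$, by carrying the perturbations $\xi,\eta$ through the state and adjoint equations. First I would reduce the coupled system (\ref{s1per}) to a variational inequality in the control alone. For fixed $\xi,\eta\in L^2(\Omega)$ and any $u\in\mathcal U$, the equation $\mathcal Ly+d(\cdot,y)=\beta(\cdot)u+\xi$ has a unique solution $y^\xi_u\in D(\mathcal L)\subset C(\bar\Omega)$, and then $\mathcal Lp+d_y(\cdot,y^\xi_u)p=g_y(\cdot,y^\xi_u,u)+\eta$ has a unique solution $p^{\xi,\eta}_u\in D(\mathcal L)\subset C(\bar\Omega)$; both assertions follow from Lemma \ref{L1e} exactly as in Propositions \ref{contstate} and \ref{concost}, using that $\xi,\eta\in L^2(\Omega)\subset L^r(\Omega)$ for a suitable $r\in(n/2,2]$. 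By the standard argument already used to establish (\ref{mt1}) in the proof of Theorem \ref{Ssr}, there is a constant $c_0>0$, independent of $u$, with
\begin{align*}
|y^\xi_u-y_u|_{L^\infty(\Omega)}+|p^{\xi,\eta}_u-p_u|_{L^\infty(\Omega)}\le c_0\big(|\xi|_{L^2(\Omega)}+|\eta|_{L^2(\Omega)}\big)\quad\forall u\in\mathcal U.
\end{align*}

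Next I would produce a solution of the \emph{restricted} inclusion by minimization, in the spirit of Lemmas \ref{Lemainclusion0} and \ref{exisrho}. Fix $\varepsilon>0$, let $\alpha,\gamma$ be the numbers of Proposition \ref{propofequiv}, and set $\varepsilon_0:=\min\{\varepsilon,\alpha\}$. Define the perturbed objective
\begin{align*}
\mathcal J_\zeta(u):=\int_\Omega\big[g(x,y^\xi_u,u)+\eta(x)y^\xi_u-\rho(x)u\big]\,dx.
\end{align*}
The key point is that the extra linear term $\int_\Omega\eta\,y^\xi_u\,dx$ is designed so that the adjoint equation of $\mathcal J_\zeta$ is precisely $\mathcal Lp=H_y(\cdot,y,p,u)+\eta$, while $\xi$ enters as a source in the state equation and $\rho$ is handled as in Lemma \ref{Lemainclusion0}. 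Arguing as in Lemma \ref{globalsol} and using that $u\mapsto y^\xi_u$ is weakly-to-$C(\bar\Omega)$ continuous (an immediate extension of Proposition \ref{weakconvergence}, since $\xi$ appears only as a fixed source), the functional $\mathcal J_\zeta$ is weakly sequentially continuous and hence attains a global minimum $u_\zeta$ over the weakly compact set $\mathcal U\cap\mathbb B_{L^1}(\bar u;\varepsilon_0)$. Writing $y:=y^\xi_{u_\zeta}$ and $p:=p^{\xi,\eta}_{u_\zeta}$, the Pontryagin principle for this constrained minimization yields $\rho\in H_u(\cdot,y,p)+N_{\mathcal U\cap\mathbb B_{L^1}(\bar u;\varepsilon_0)}(u_\zeta)$, so $(y,p,u_\zeta)$ solves (\ref{s1per}) with the normal cone of the intersection in place of $N_{\mathcal U}$.

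Then I would push the minimizer into the interior of the ball. Since $\bar u\in\mathcal U\cap\text{int}\,\mathbb B_{L^1}(\bar u;\varepsilon_0)$, Lemma \ref{coneint} gives $N_{\mathcal U\cap\mathbb B_{L^1}(\bar u;\varepsilon_0)}(u_\zeta)=N_{\mathcal U}(u_\zeta)+N_{\mathbb B_{L^1}(\bar u;\varepsilon_0)}(u_\zeta)$, so there is $\nu\in N_{\mathbb B_{L^1}(\bar u;\varepsilon_0)}(u_\zeta)$ with $\rho-H_u(\cdot,y,p)-\nu\in N_{\mathcal U}(u_\zeta)$. Setting $\tilde\rho:=\rho+\sigma_{u_\zeta}-H_u(\cdot,y,p)$, where $\sigma_{u_\zeta}=H_u(\cdot,y_{u_\zeta},p_{u_\zeta})$ is the \emph{unperturbed} switching function, the estimate of the first paragraph together with the local Lipschitz continuity of $H_u$ (as in (\ref{mt2})) gives $|\tilde\rho|_{L^\infty(\Omega)}\le C|\zeta|_{\mathcal Z}$ with $C$ independent of $\zeta$, and $\tilde\rho-\nu\in\sigma_{u_\zeta}+N_{\mathcal U}(u_\zeta)$. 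Testing this inclusion with $\bar u$, using $\int_\Omega\nu(\bar u-u_\zeta)\,dx\le0$ and the growth estimate of Proposition \ref{propofequiv} (applicable since $|u_\zeta-\bar u|_{L^1(\Omega)}\le\varepsilon_0\le\alpha$), I obtain, exactly as in Lemma \ref{exisrho},
\begin{align*}
|u_\zeta-\bar u|_{L^1(\Omega)}\le\big(\gamma^{-1}|\tilde\rho|_{L^\infty(\Omega)}\big)^{1/k^*}\le\big(\gamma^{-1}C|\zeta|_{\mathcal Z}\big)^{1/k^*}.
\end{align*}
Choosing $\delta>0$ so small that the right-hand side is strictly less than $\varepsilon_0$ whenever $|\zeta|_{\mathcal Z}\le\delta$ forces $u_\zeta\in\text{int}\,\mathbb B_{L^1}(\bar u;\varepsilon_0)$, whence $N_{\mathbb B_{L^1}(\bar u;\varepsilon_0)}(u_\zeta)=\{0\}$ and therefore $\rho\in H_u(\cdot,y,p)+N_{\mathcal U}(u_\zeta)$; combined with the two equations this gives $\zeta\in\Phi(\psi)$ for $\psi=(y,p,u_\zeta)$. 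A final shrinking of $\delta$, using once more the first-paragraph estimate together with Proposition \ref{contstatel1} to bound $|y-y_{\bar u}|_{L^2(\Omega)}$ and $|p-p_{\bar u}|_{L^2(\Omega)}$ by $|\zeta|_{\mathcal Z}$ and $|u_\zeta-\bar u|_{L^1(\Omega)}$, ensures $\psi\in\mathbb B_{\mathcal Y}(\bar\psi;\varepsilon)$.

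The hard part will not be the two elliptic solves, which are routine, but the two structural steps: correctly engineering $\mathcal J_\zeta$ so that its first-order system reproduces the fully perturbed system (\ref{s1per}) — in particular that the term $\int_\Omega\eta\,y^\xi_u\,dx$ generates exactly the $\eta$-perturbation in the adjoint equation — and then showing the constrained minimizer is interior. The latter is the genuinely delicate point, since it requires the quadratic growth furnished by Assumption \ref{A2} (through Proposition \ref{propofequiv}) to dominate the perturbation; it is precisely the resulting Hölder rate $|u_\zeta-\bar u|_{L^1(\Omega)}\lesssim|\zeta|_{\mathcal Z}^{1/k^*}$ that makes the interiorization, and hence the whole existence statement, possible.
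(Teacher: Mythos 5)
Your proposal is correct, but it takes a genuinely different route from the paper. The paper reduces the perturbed system to the \emph{unperturbed} inclusion by defining $\nu_{u,\zeta}:=\rho+H_u(\cdot,y_u,p_u)-H_u(\cdot,y_{u,\zeta},p_{u,\zeta})$ (your $\tilde\rho$), observing that $\zeta\in\Phi(\psi)$ is equivalent to $\nu_{u,\zeta}\in\sigma_u+N_{\mathcal U}(u)$; since this effective perturbation depends on $u$ itself, the paper treats it as a fixed-point problem and solves it by iteration: it repeatedly invokes Lemma \ref{exisrho} (for constant right-hand sides) to build a sequence $\{u_k\}$ with $\nu_{u_k,\zeta^*}\in\sigma_{u_{k+1}}+N_{\mathcal U}(u_{k+1})$, then extracts a weak limit $u^*$ and uses Proposition \ref{weakconvergence} together with the closedness of the normal-cone graph under the relevant convergences to conclude $\nu_{u^*,\zeta^*}\in\sigma_{u^*}+N_{\mathcal U}(u^*)$. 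You instead dissolve the fixed-point structure at the source: you absorb $\xi$ into the state equation and $\eta,\rho$ into an engineered objective $\mathcal J_\zeta$ whose first-order optimality system is exactly the perturbed system (\ref{s1per}) --- your computation that the term $\int_\Omega\eta\,y^\xi_u\,dx$ produces precisely the $\eta$-shift in the adjoint equation is right --- and then run the minimization-plus-interiorization argument of Lemmas \ref{Lemainclusion0} and \ref{exisrho} once, directly on the perturbed problem. What each approach buys: yours needs no iteration and no limit passage (weak compactness and Proposition \ref{weakconvergence} are used only for existence of the constrained minimizer), but it requires re-deriving the Pontryagin principle and the Hölder growth estimate for the perturbed problem rather than citing Lemma \ref{exisrho} as a black box; the paper's route reuses its lemmas verbatim, at the price of the compactness and normal-cone-closedness machinery needed to close the fixed-point loop. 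Both proofs rest on the same two pillars --- the uniform $L^\infty$ estimate on the perturbed state/adjoint differences (valid with $r=2>n/2$ since $n\in\{2,3\}$) and the interiorization via Lemma \ref{coneint} driven by Proposition \ref{propofequiv} --- so your argument is fully consistent with the paper's framework.
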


\begin{proof}
	For each $u\in\mathcal U$ and $\zeta=(\xi,\eta,\rho)\in \mathcal Z$, define 
	$\nu_{u,\zeta}:=\rho+H_{u}(\cdot,y_u,p_u)-H_{u}(\cdot,y_{u,\zeta},p_{u,\zeta})$, 
	where $y_{u,\zeta}$ and $p_{u,\zeta}$ are the unique solutions of
	\begin{align}\label{s1pers1}
	\left\{ \begin{array}{cll}
	\mathcal Ly&=&f(\cdot,y,u)+\xi,\\
	\mathcal Lp&=&H_{y}(\cdot,y,p,u)+\eta.
	\end{array} \right.
	\end{align}
	By a standard argument, one can find positive numbers $c_1$ and $c_2$ such that 
	\begin{align}\label{dl2}
	|y_{u,\zeta}-y_{u}|_{L^2(\Omega)}+|p_{u,\zeta}-p_{u}|_{L^2(\Omega)}\le 
	c_1\Big(|\xi|_{L^2(\Omega)}+|\eta|_{L^2(\Omega)}\Big),
	\end{align}
	and $|\nu_{u,\zeta}|_{L^\infty(\Omega)}\le c_2|\zeta|_{\mathcal Z}$ for all $u\in\mathcal U$ 
	and $\zeta\in\mathcal Z$. Let $\varepsilon>0$ be arbitrary. 
	By Lemma \ref{exisrho}, the exists $\delta_0>0$ such that for each $\nu\in \mathbb B_{L^\infty}(0;\delta_0)$ 
	there exists $u\in \mathcal U\cap \mathbb B_{L^1}(\bar u;\varepsilon/2)$ satisfying $\nu\in\sigma_u+ N_{\mathcal U}(u)$. 
	Define $\delta:=\min\{c_2^{-1}\delta_0,(2c_1)^{-1}\varepsilon\}$ and let $\zeta^*\in \mathbb B_{\mathcal Z}(0;\delta)$ 
	be arbitrary; we will prove that there exists $u^*\in\mathcal U\cap \mathbb B_{L^1}(\bar u;\varepsilon/2)$ 
	such that $\nu_{u^*,\zeta^*}\in\sigma_{u^*}+N_{\mathcal U}(u^*)$. First, observe that 
	\begin{align*}
	|\nu_{u,\zeta^*}|_{L^\infty(\Omega)}\le c_2|\zeta^*|_{\mathcal Z}\le \delta_0\quad\forall u\in\mathcal U.
	\end{align*}
	Therefore, by Lemma \ref{exisrho}, we can {inductively define} 
	a sequence $\{u_{k}\}_{k=1}^\infty\subset \mathcal U$ 
	such that $\nu_{u_{k},\zeta^*}\in\sigma_{u_{k+1}}+N_{\mathcal U}(u_{k+1})$ and 
	$|u_k-\bar u|_{L^1(\Omega)}\le \varepsilon/2$ for all $k\in\mathbb N$. Since $\mathcal U$ is weakly 
	compact in $L^2(\Omega)$, we may assume that $u_{k}\rightharpoonup u^*$ 
	weakly in $L^2(\Omega)$ for some $u^*\in\mathcal U$.  Weak convergence in $L^2(\Omega)$ 
	implies weak convergence in $L^1(\Omega)$ and $\mathbb B_{L^1}(\bar u;\varepsilon/2)$ is weakly sequentially 
	closed in $L^1(\Omega)$, therefore $u^*\in \mathbb B_{L^1}(\bar u; \varepsilon/2)$. Using Proposition \ref{weakconvergence}, one can see that 
	$\nu_{u_k,\zeta^*}-\sigma_{u_{k+1}}\to\nu_{u^*,\zeta^*}-\sigma_{u^*}$ in $L^\infty(\Omega)$,
	and consequently that $\nu_{u^*,\zeta^*}\in\sigma_{u^*}+N_{\mathcal U}(u^*)$. 
	We conclude then that $\zeta^*\in\Phi(\psi^*)$, where $\psi^*:=(y_{u^*,\zeta^*}, p_{u^*,\zeta^*},u^*)$. 
	Finally, by definition of $\delta$ and (\ref{dl2}) 
	\begin{align*}
	|\psi^*-\bar\psi|_{\mathcal Y}\le c_1|\zeta|_{\mathcal Z}+\varepsilon/2\le\varepsilon.
	\end{align*}
	Thus, $\zeta^*\in\Phi(\psi^*)$ and $\psi^*\in \mathbb B_{\mathcal Y}(\bar\psi;\varepsilon)${, which completes the proof}. 
\end{proof}

{The next theorem claims that \emph{all} solutions of the perturbed optimality system (\ref{s1per}) are arbitrarily close to the solution 
	of the unperturbed optimality system, provided that the solution of the latter is globally unique,  
	Assumption \ref{A2} holds, and the perturbation is sufficiently small.}

\begin{thrm}\label{Ssrexisuni}
	Let Assumption \ref{A2} hold and suppose additionally that $\bar\psi$ is the unique element 
	of $\mathcal Y$ that satisfies $0\in\Phi(\bar\psi)$. For each $\varepsilon>0$ there exists $\delta>0$ 
	such that if $\zeta\in \mathbb B_{\mathcal Z}(0;\delta)$ and $\psi\in\mathcal Y$ satisfy $\zeta\in \Phi(\psi)$,  
	then $\psi\in \mathbb B_{\mathcal Y}(\bar \psi;\varepsilon)$.
\end{thrm}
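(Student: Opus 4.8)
The plan is to establish the \emph{global} counterpart of Theorem \ref{Ssr}: instead of the local estimate of Lemma \ref{Esslem}, I would feed the argument with its globalized version Lemma \ref{Esslemuni}, whose applicability is exactly what the extra uniqueness hypothesis buys. The first step is to reconcile the two uniqueness statements. I claim that ``$\bar\psi$ is the unique solution of $0\in\Phi(\psi)$ in $\mathcal Y$'' is equivalent to ``$\bar u$ is the unique solution of $0\in\sigma_u+N_{\mathcal U}(u)$ in $\mathcal U$'', the latter being the hypothesis required by Lemma \ref{Esslemuni}. Indeed, for any triple $(y,p,u)$ the first two components of $0\in\Phi(y,p,u)$ force $y=y_u$ and $p=p_u$, so that the third component reads $0\in H_u(\cdot,y_u,p_u)+N_{\mathcal U}(u)=\sigma_u+N_{\mathcal U}(u)$. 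Thus the solutions of $0\in\Phi(\cdot)$ are precisely the triples $(y_u,p_u,u)$ with $u$ a solution of the reduced inclusion, and the two uniqueness properties coincide; in particular the direction I need, namely uniqueness of the $\Phi$-solution implies uniqueness of the variational solution, holds.

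With this in hand, I would run the argument of Theorem \ref{Ssr} almost verbatim, but dropping the a priori restriction $\psi\in\mathbb B_{\mathcal Y}(\bar\psi;\alpha_1)$. Given $\psi=(y,p,u)$ and $\zeta=(\xi,\eta,\rho)$ with $\zeta\in\Phi(\psi)$, I would first compare the perturbed state and adjoint for the control $u$ with the unperturbed pair $(y_u,p_u)$ for the same $u$, obtaining the standard a priori estimate $|y-y_u|_{L^\infty(\Omega)}+|p-p_u|_{L^\infty(\Omega)}\le c_1\big(|\xi|_{L^2(\Omega)}+|\eta|_{L^2(\Omega)}\big)$ with $c_1$ uniform over $u\in\mathcal U$ (the state and costate sets being bounded in $C(\bar\Omega)$). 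Setting $\nu:=\rho+\sigma_u-H_u(\cdot,y,p)$, the local Lipschitz continuity of $H_u$ together with this estimate gives $|\nu|_{L^\infty(\Omega)}\le c_3|\zeta|_{\mathcal Z}$, and since $\rho-H_u(\cdot,y,p)\in N_{\mathcal U}(u)$ one has $\nu\in\sigma_u+N_{\mathcal U}(u)$.

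Now comes the crucial step, which is exactly where the global uniqueness is consumed: I would invoke Lemma \ref{Esslemuni} (legitimate by the equivalence above) to get constants $\delta',c'>0$ with $|u-\bar u|_{L^1(\Omega)}\le c'|\nu|_{L^\infty(\Omega)}^{1/k^*}\le c'c_3^{1/k^*}|\zeta|_{\mathcal Z}^{1/k^*}$ whenever $c_3|\zeta|_{\mathcal Z}\le\delta'$. Unlike Lemma \ref{Esslem}, this needs no prior knowledge that $u$ is $L^1$-close to $\bar u$; the required confinement is produced internally by the compactness argument in its proof (weak sequential compactness of $\mathcal U$, continuity of the state/adjoint maps from Proposition \ref{weakconvergence}, and the weak-to-strong $L^1$ upgrade for bang-bang controls from Lemma \ref{weakimpliesstrong}). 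From here I would close as in Theorem \ref{Ssr}: Proposition \ref{contstatel1} bounds $|y_u-y_{\bar u}|_{L^2(\Omega)}+|p_u-p_{\bar u}|_{L^2(\Omega)}$ by $|u-\bar u|_{L^1(\Omega)}$, and combining with the a priori estimate via the triangle inequality yields $d_{\mathcal Y}(\psi,\bar\psi)\le C|\zeta|_{\mathcal Z}^{1/k^*}$ for a constant $C$ independent of $\psi,\zeta$. Choosing $\delta:=\min\{\delta'/c_3,\,(\varepsilon/C)^{k^*}\}$ then forces $\psi\in\mathbb B_{\mathcal Y}(\bar\psi;\varepsilon)$ whenever $|\zeta|_{\mathcal Z}\le\delta$, as required.

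I expect the only genuinely delicate point to be the first one, namely checking that the full-system uniqueness hypothesis is the correct input for Lemma \ref{Esslemuni}; everything downstream is a transcription of Theorem \ref{Ssr} with Lemma \ref{Esslemuni} replacing Lemma \ref{Esslem}, and the argument in fact delivers the sharper global bound $d_{\mathcal Y}(\psi,\bar\psi)\le C|\zeta|_{\mathcal Z}^{1/k^*}$, of which the claimed qualitative closeness is immediate. An alternative, more self-contained route would bypass Lemma \ref{Esslemuni} and argue by contradiction: assuming sequences $\zeta_k\to0$ and $\psi_k$ with $\zeta_k\in\Phi(\psi_k)$ but $d_{\mathcal Y}(\psi_k,\bar\psi)>\varepsilon$, I would extract a weak limit $u^*$ of the controls, identify it as a solution of $0\in\sigma_{u^*}+N_{\mathcal U}(u^*)$ hence $u^*=\bar u$ (so $u^*$ is bang-bang by Proposition \ref{bangbang}), and upgrade weak to strong $L^1$-convergence via Lemma \ref{weakimpliesstrong} to reach a contradiction. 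This merely re-derives the content of Lemma \ref{Esslemuni}, so invoking that lemma is the economical choice.
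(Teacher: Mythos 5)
Your proposal is correct and follows essentially the same route as the paper: define $\nu:=\rho+H_u(\cdot,y_u,p_u)-H_u(\cdot,y,p)$, bound $|\nu|_{L^\infty(\Omega)}$ and the state/adjoint discrepancies by $|\zeta|_{\mathcal Z}$ exactly as in Theorem \ref{Ssr}, and then invoke Lemma \ref{Esslemuni} in place of Lemma \ref{Esslem}, choosing $\delta$ at the end. Your explicit verification that uniqueness of the $\Phi$-solution is equivalent to uniqueness of the solution of the reduced inclusion $0\in\sigma_u+N_{\mathcal U}(u)$ (via uniqueness of the elliptic state and adjoint for fixed $u$) is a point the paper uses only implicitly, and is a worthwhile clarification rather than a deviation.
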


\begin{proof}
	Let $\delta_0$ and $c_0$ be the positive numbers in Lemma \ref{Esslemuni}. 
	Let $\zeta=(\xi,\eta,\rho)\in{\mathcal Z}$ and $\psi=(y,p,u)\in\mathcal Y$ {be} such that  $\zeta\in\Phi(\psi)$. 
	Define $\nu:=\rho+H_{u}(\cdot,y_u,p_u) - H_{u}(\cdot,y,p).$ Arguing as in the proof of Theorem \ref{Ssr}, 
	we can find positive numbers $c_1$ and $c_2$ (independent of $\psi$ and $\zeta$) such that 
	$|\nu|_{L^\infty(\Omega)}\le c_1|\zeta|_{\mathcal Z}$ and 
	\begin{align*}
	|y-y_{\bar u}|_{L^2(\Omega)}+|p-p_{\bar u}|_{L^2(\Omega)}\le 
	c_2\Big(|\zeta|_{\mathcal Z}+|u-\bar u|_{L^1(\Omega)}\Big).
	\end{align*}
	Let $\delta:=\min\{c_1^{-1}\delta_0,(2c_0c_2)^{-k^*}c_1^{-1}\varepsilon^{k^*},(2c_2)^{-1}\varepsilon\}$ 
	and suppose that $\zeta\in \mathbb B_{\mathcal Z}(0;\delta)$.
	As $\rho\in H_{u}(\cdot,y,p)+ N_{\mathcal U}(u)$, we have $\nu\in H_{u}(\cdot, y_u,p_u)+ N_{\mathcal U}(u)$. 
	By Lemma \ref{Esslemuni},   
	\begin{align*}
	|u-\bar u|_{L^1(\Omega)}\le c_0|\nu|_{L^\infty(\Omega)}^{\frac{1}{{k^*}}}\le 
	c_0c_1^\frac{1}{{k^*}}|\zeta|^{\frac{1}{{k^*}}}_{\mathcal Z}\le c^{-1}_2\varepsilon/2.
	\end{align*}
	Thus,
	\begin{align*}
	|y-y_{\bar u}|_{L^2(\Omega)}+|p-p_{\bar u}|_{L^2(\Omega)}+|u-\bar u|_{L^1(\Omega)}\le    
	c_2\Big(\delta+c^{-1}_2\varepsilon/2\Big)\le\varepsilon.
	\end{align*}
\end{proof}

\section{Nonlinear Perturbations}\label{Section nonlin}
In this section we apply the subregularity results in Section \ref{Section Stab} for studying the effect of certain nonlinear perturbations on the optimal solution.
We consider the following family of problems 
\begin{align}\label{costpp}
\quad\min_{u\in\mathcal U}\left\lbrace\int_{\Omega}\Big[g(x,y,u)+\eta(x,y,u)\Big]\,dx\right\rbrace,
\end{align}
subject to 
\begin{align}\label{systempp}
\left\{ \begin{array}{cclcc}
-\dive\big(A(x)\nabla y\big)+d(x,y)+\xi(x,y)&=&\beta(x)u& \text{in}& \Omega  \\
\\ A(x)\nabla y\cdot \nu+b(x)y&=&0 &\text{on}& \partial\Omega.
\end{array} \right.
\end{align}

In order to specify the perturbations $\xi$ and $\eta$ under consideration and their topology, we begin the section recalling some elementary notions of functional analysis.

As usual,  $C(\mathbb R^s)$ denotes the space of all continuous functions $\omega:\mathbb R^s\to\mathbb R$. For each $m\in\mathbb N$, let $K_m$ denote the closed ball in $\mathbb R^s$ centered at zero with radius $m$. Consider the metric on $C(\mathbb R^s)$ given by
\begin{align*}
d_{C}(\omega_1,\omega_2):=\sum_{m=1}^{\infty}\frac{1}{2^m}\frac{|\omega_1-\omega_2|_{L^\infty(K_m)}}{1+|\omega_1-\omega_2|_{L^\infty(K_m)}}.
\end{align*}
This metric induces the compact-convergence topology on $C(\mathbb R^s)$. In this topology, a sequence $\{\omega_m\}_{m=1}^\infty\subset C(\mathbb R^s)$ converges to $\omega\in C(\mathbb R^s)$ if and only if 
$|\omega-\omega_m|_{L^\infty(K)}\to0$ for every compact set $K\subset\mathbb R^s$. This topology is also known as the compact-open topology, see \cite[Chapter 7]{Munkres}. The following lemma is straightforward and follows from the definition of $d_{C}$.
\begin{lmm}\label{metlem}
	For each compact set $K\subset\mathbb R^s$ there exists $m\in\mathbb N$ such that 
	\begin{align*}
	|\omega_1-\omega_2|_{L^\infty(K)}\le 2^{m}d_{C}(\omega_1,\omega_2)
	\end{align*}
	for all $\omega_1,\omega_2\in C(\mathbb R^s)$ such that $d_{C}(\omega_1,\omega_2)\le {2^{{-m}}}$.
\end{lmm}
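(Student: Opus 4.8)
The plan is to reduce the $L^\infty$-norm over the arbitrary compact set $K$ to a single term of the series defining $d_C$, and then to invert the increasing function $t\mapsto t/(1+t)$.

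First I would invoke compactness. Since $K$ is bounded, there is some $m_0\in\mathbb N$ with $K\subseteq K_{m_0}$; consequently
\begin{align*}
|\omega_1-\omega_2|_{L^\infty(K)}\le |\omega_1-\omega_2|_{L^\infty(K_{m_0})}=:a .
\end{align*}
This lets me work with the fixed ball $K_{m_0}$, which is precisely the set entering the $m_0$-th summand of $d_C$.

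Next, I would discard all but this one summand. Since every term of the series is nonnegative,
\begin{align*}
d_C(\omega_1,\omega_2)\ge \frac{1}{2^{m_0}}\,\frac{a}{1+a},
\end{align*}
and hence $a/(1+a)\le 2^{m_0}d_C(\omega_1,\omega_2)$. Rearranging gives $a\bigl(1-2^{m_0}d_C(\omega_1,\omega_2)\bigr)\le 2^{m_0}d_C(\omega_1,\omega_2)$, which is the estimate I want to invert.

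Finally I would set $m:=m_0+1$ and assume $d_C(\omega_1,\omega_2)\le 2^{-m}$. Then $2^{m_0}d_C(\omega_1,\omega_2)\le 2^{m_0-m}=1/2$, so the factor multiplying $a$ satisfies $1-2^{m_0}d_C(\omega_1,\omega_2)\ge 1/2$. Dividing yields
\begin{align*}
a\le \frac{2^{m_0}d_C(\omega_1,\omega_2)}{1-2^{m_0}d_C(\omega_1,\omega_2)}\le 2^{m_0+1}d_C(\omega_1,\omega_2)=2^{m}d_C(\omega_1,\omega_2),
\end{align*}
and combining with the first inequality gives the claim. The result is elementary, so I do not expect a genuine obstacle; the only point to get right is the choice $m=m_0+1$, which is exactly what keeps the denominator $1-2^{m_0}d_C$ bounded below by $1/2$ and thereby controls the inversion of $t\mapsto t/(1+t)$.
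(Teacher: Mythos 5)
Your proof is correct: the paper states this lemma without proof (treating it as elementary), and your argument — bounding $K$ by some ball $K_{m_0}$, isolating the $m_0$-th summand of the series, and choosing $m=m_0+1$ so that $1-2^{m_0}d_C(\omega_1,\omega_2)\ge 1/2$ permits inverting $t\mapsto t/(1+t)$ — is precisely the standard argument the authors leave implicit.
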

\begin{proof}
	Let $K$ be a compact subset of $\mathbb R^s$. There exists $i\in\mathbb N$ such that $K\subset K_i$, where $K_i$ denotes the closed ball in $\mathbb R^s$ centered at zero with radius $i$. Now, by definition of the metric $d_{C}$,
	\begin{align*}
		\frac{|\omega|_{L^\infty(K_i)}}{1+|\omega|_{L^\infty(K_i)}}\le 2^i d_{C}(\omega,0)\quad\forall \omega\in C(\mathbb R^s).
	\end{align*}
	Hence,
	\begin{align*}
		|\omega|_{L^\infty(K_i)}&\le \frac{2^{i}d_{C}(\omega,0)}{1-2^{i}d_{C}(\omega,0)}\le 2^{i+1}d_{C}(\omega,0)
	\end{align*}
	for all $\omega\in C(\mathbb R^s)$ with $d_{C}(\omega,0)\le 2^{-(i+1)}$. Let $m=i+1$. Then
	\begin{align*}
	|\omega_2-\omega_1|_{L^\infty(K)}\le|\omega_2-\omega_1|_{L^\infty(K_i)}\le 2^m d_{C}(\omega_2-\omega_1,0)=2^m d_{C}(\omega_2,\omega_1)
	\end{align*}
	for all $\omega_1,\omega_2\in C(\mathbb R^s)$ with $d_{C}(\omega_1,\omega_2)\le {2^{{-m}}}$.
\end{proof}

\subsection{The perturbations}
We begin describing the space of perturbations appearing in equation (\ref{systempp}). Let $\Upsilon_{s}$ be the set of all continuously differentiable functions $\xi: \mathbb R^n\times \mathbb R\to\mathbb R$ such that   $	d_y(x,y)+\xi_{y}(x,y)\ge 0$
for all $x\in\Omega$ and $y\in\mathbb R$. The set $\Upsilon_{s}$ does not constitute a linear space, but it allows to have well-defined states for each perturbation.
\begin{prpstn}\label{contstateper}
	For each $u\in\mathcal U$ and $\xi\in \Upsilon_{s}$ there exists a unique function $y^{\xi}_u\in D(\mathcal L)$ satisfying 
	\begin{align*}
	\mathcal Ly^\xi_u+d(\cdot,y^\xi_u)+\xi(\cdot,y^\xi_u)=\beta(\cdot)u.
	\end{align*}
	Moreover, there exist positive numbers $M$ and $\delta$ such that $|y_{u}^\xi|_{L^\infty(\Omega)}\le M$
	for all $u\in\mathcal U$ and $\xi\in\Upsilon_{s}$ with $d_{C}(\xi,0)\le\delta$.
\end{prpstn}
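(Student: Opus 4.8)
The plan is to absorb the perturbation into the nonlinearity by setting $\tilde d:=d+\xi$ and to exploit that, by the defining property of $\Upsilon_{s}$, it is monotone nondecreasing in its second variable. First I would check that $\tilde d$ inherits all the structural properties used in Proposition \ref{contstate}: it is Carath\'eodory (the sum of the Carath\'eodory function $d$ and the continuous, hence Carath\'eodory, function $\xi$); it is locally Lipschitz in $y$ uniformly in $x$ (since $d$ is, by $(ii)$ of Assumption \ref{A1}, and since $\xi\in C^1(\mathbb R^n\times\mathbb R)$ is Lipschitz on every compact set $\bar\Omega\times[-M,M]$); one has $\tilde d(\cdot,0)=d(\cdot,0)+\xi(\cdot,0)\in L^\infty(\Omega)$ because $\bar\Omega$ is compact and $\xi(\cdot,0)$ is continuous; and $\tilde d_y=d_y+\xi_y\ge 0$ on $\Omega\times\mathbb R$. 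Existence and uniqueness of $y^\xi_u$ then follow from the same theory as Proposition \ref{contstate} (see \cite[Theorem 4.8]{TroeltzschPde}), and $y^\xi_u\in D(\mathcal L)$ is automatic, since $\mathcal L y^\xi_u=\beta(\cdot)u-\tilde d(\cdot,y^\xi_u)\in L^\infty(\Omega)\subset L^2(\Omega)$ once $y^\xi_u\in L^\infty(\Omega)$. Uniqueness alone can also be read directly off Lemma \ref{L1e}: the difference $w$ of two solutions satisfies $\mathcal L w+\alpha(\cdot)w=0$ with $\alpha\ge0$, hence $w=0$.

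For the uniform bound I would linearize the equation about the value $y=0$. Setting
\[
\alpha(x):=\int_0^1 \tilde d_y\big(x,\theta\, y^\xi_u(x)\big)\,d\theta,
\]
the monotonicity $\tilde d_y\ge0$ gives $\alpha\ge0$ a.e.\ in $\Omega$, and since $y^\xi_u\in C(\bar\Omega)$ is bounded while $\tilde d_y=d_y+\xi_y$ is bounded on $\bar\Omega\times[-|y^\xi_u|_{L^\infty(\Omega)},|y^\xi_u|_{L^\infty(\Omega)}]$, one has $\alpha\in L^\infty(\Omega)$. The equation then reads
\[
\mathcal L y^\xi_u+\alpha(\cdot)\,y^\xi_u=\beta(\cdot)u-d(\cdot,0)-\xi(\cdot,0)=:h,
\]
and the decisive observation is that $h$ depends on $\xi$ only through $\xi(\cdot,0)$, that is, through the restriction of $\xi$ to the fixed compact set $\bar\Omega\times\{0\}\subset\mathbb R^n\times\mathbb R$, and not through the (a priori large and $\xi$-dependent) range of $y^\xi_u$.

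It remains to make $h$ uniformly bounded. Applying Lemma \ref{metlem} with a compact ball $K\supset\bar\Omega\times\{0\}$ furnishes an integer $m_0$ with $|\xi(\cdot,0)|_{L^\infty(\Omega)}\le|\xi|_{L^\infty(K)}\le 2^{m_0}d_C(\xi,0)$ whenever $d_C(\xi,0)\le 2^{-m_0}$; choosing $\delta:=2^{-m_0}$ makes $|\xi(\cdot,0)|_{L^\infty(\Omega)}\le 1$ for all admissible $\xi$. Together with $|\beta(\cdot)u|_{L^\infty(\Omega)}\le|\beta|_{L^\infty(\Omega)}\max\{|b_1|_{L^\infty(\Omega)},|b_2|_{L^\infty(\Omega)}\}$ and $d(\cdot,0)\in L^\infty(\Omega)$, this yields a constant $C_0$, independent of $u\in\mathcal U$ and of $\xi$ with $d_C(\xi,0)\le\delta$, with $|h|_{L^\infty(\Omega)}\le C_0$. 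Now I would invoke Lemma \ref{L1e} with a fixed $r>n/2$: since its constant $c$ is \emph{independent of the coefficient $\alpha$},
\[
|y^\xi_u|_{L^\infty(\Omega)}\le|y^\xi_u|_{H^1(\Omega)\cap C(\bar\Omega)}\le c\,|h|_{L^r(\Omega)}\le c\,(\text{meas}\,\Omega)^{1/r}C_0=:M,
\]
which is the asserted uniform bound.

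The main obstacle, and the reason this is not a mere restatement of Proposition \ref{contstate}, is the apparent circularity created by the compact-open topology: closeness of $\xi$ to $0$ controls $\xi$ only on compact subsets of $\mathbb R^n\times\mathbb R$, whereas bounding $\xi(\cdot,y^\xi_u)$ directly would require knowing the range of $y^\xi_u$ in advance. The linearization about $y=0$ is exactly what breaks this circularity: it absorbs all the $y$-dependence of $\tilde d$ into the nonnegative coefficient $\alpha$ — whose size is irrelevant thanks to the $\alpha$-uniform estimate of Lemma \ref{L1e} — and leaves on the right-hand side only $\xi(\cdot,0)$, a quantity living on the fixed compact set $\bar\Omega\times\{0\}$ and therefore controllable through $d_C(\xi,0)$ by Lemma \ref{metlem}.
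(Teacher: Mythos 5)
Your proof is correct and takes essentially the same approach as the paper: both reduce everything to the a priori bound $|y^\xi_u|_{L^\infty(\Omega)}\le c\,|\beta(\cdot)u-d(\cdot,0)-\xi(\cdot,0)|_{L^\infty(\Omega)}$ with $c$ independent of $u$ and $\xi$, and then control $\xi(\cdot,0)$ by applying Lemma \ref{metlem} to the fixed compact set $\bar\Omega\times\{0\}$, which is exactly how the circularity of the compact-open topology is broken. The only difference is presentational: the paper cites this estimate directly from \cite[Theorem 4.8]{TroeltzschPde}, whereas you re-derive it via the mean-value linearization about $y=0$ and the $\alpha$-uniform estimate of Lemma \ref{L1e}, a more self-contained but equivalent argument.
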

\begin{proof}
	The existence follows from \cite[Theorem 4.8]{TroeltzschPde}. Moreover, also from this theorem, there exists $c>0$ such that
	\begin{align*}
	|y^\xi_u|_{L^\infty(\Omega)}\le c\big|\beta(\cdot)u-d(\cdot,0)-\xi(\cdot,0)\big|_{L^\infty(\Omega)}
	\end{align*}
	for all $u\in\mathcal U$ and $\xi\in\Upsilon_{s}$. Let $K:=\bar\Omega\times\{0\}$, then by Lemma \ref{metlem} there exists $m\in\mathbb N$ such that
	\begin{align*}
	|y^\xi_u|_{L^\infty(\Omega)}&\le c\Big( |\beta|_{L^\infty(\Omega)}|u|_{L^\infty(\Omega)}+|d(\cdot,0)|_{L^\infty(\Omega)}+|\xi|_{L^\infty(K)}\Big)\\
	&\le c\Big( |\beta|_{L^\infty(\Omega)}\sup_{u \in \mathcal U}|u|_{L^\infty(\Omega)}+|d(\cdot,0)|_{L^\infty(\Omega)}+2^{m}d_{C}(\xi,0)\Big)\\
	& \le c\Big( |\beta|_{L^\infty(\Omega)}\sup_{u \in \mathcal U}|u|_{L^\infty(\Omega)}+|d(\cdot,0)|_{L^\infty(\Omega)}+1\Big)
	\end{align*}
	for all $u\in\mathcal U$ and $\xi\in\Upsilon_{s}$ with $d_{C}(\xi,0)\le 2^{{-m}}$. The result follows defining $\delta:=2^{{-m}}$ and 
	\begin{align*}
	M:=c\Big( |\beta|_{L^\infty(\Omega)}\sup_{u \in \mathcal U}|u|_{L^\infty(\Omega)}+|d(\cdot,0)|_{L^\infty(\Omega)}+1\Big).
	\end{align*}
\end{proof}

We now proceed to describe the perturbations appearing in the cost functional (\ref{costpp}). Consider the set $\Upsilon_{c}$ of all continuously differentiable functions $\eta:\mathbb R^n\times\mathbb R\times\mathbb R\to\mathbb R$ such that $\eta(x,y,\cdot)$ is convex for all $x\in\Omega$ and $y\in\mathbb R$. We have the following result concerning the adjoint variable of the perturbed problem. Its proof is similar to the one of Proposition \ref{contstateper}.

\begin{prpstn}\label{contcostateper}
	For each $u\in\mathcal U$, $\xi\in \Upsilon_{s}$ and $\eta\in\Upsilon_c$ there exists a unique function $p^{\xi,\eta}_u\in D(\mathcal L)$ satisfying 
	\begin{align*}
	\mathcal Lp^{\xi,\eta}_u+\big[d_y(\cdot,y_u^\xi)+\xi_y(\cdot,y^\xi_u)\big]p^{\xi,\eta}_u=g_y(\cdot,y_{u}^\xi,u)+\eta_y(\cdot,y_{u}^\xi,u).
	\end{align*}
	Moreover, there exist positive numbers $M$ and $\delta$ such that $|p_{u}^{\xi,\eta}|_{L^\infty(\Omega)}\le M$
	for all $u\in\mathcal U$, $\xi\in\Upsilon_{s}$  and $\eta\in\Upsilon_{c}$ with $d_{C}(\xi,0)+d_{C}(\xi_y,0)+d_{C}(\eta_y,0)\le\delta$.
\end{prpstn}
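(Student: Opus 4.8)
The plan is to mirror the proof of Proposition \ref{contstateper}, exploiting the fact that the costate equation is \emph{linear} in $p$, so that Lemma \ref{L1e} delivers existence, uniqueness, and the a priori estimate all at once. First I would rewrite the equation in the exact form treated by Lemma \ref{L1e}, namely $\mathcal L p_u^{\xi,\eta}+\alpha(\cdot)p_u^{\xi,\eta}=h$, with coefficient $\alpha:=d_y(\cdot,y_u^\xi)+\xi_y(\cdot,y_u^\xi)$ and right-hand side $h:=g_y(\cdot,y_u^\xi,u)+\eta_y(\cdot,y_u^\xi,u)$. The structural point making this work is that $\alpha\ge 0$ a.e.\ in $\Omega$: this is precisely the defining inequality of $\Upsilon_s$, namely $d_y(x,y)+\xi_y(x,y)\ge 0$, evaluated along the state $y=y_u^\xi(x)$. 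Since $y_u^\xi\in L^\infty(\Omega)$ by Proposition \ref{contstateper}, the coefficient lies in $L^\infty(\Omega)$ (the term $d_y(\cdot,y_u^\xi)$ is measurable and bounded by Assumption \ref{A1}$(ii)$--$(iv)$, and $\xi_y(\cdot,y_u^\xi)$ by joint continuity of $\xi_y$ and boundedness of $y_u^\xi$); likewise $h\in L^\infty(\Omega)\subset L^2(\Omega)\cap L^r(\Omega)$ for $r>n/2$. Lemma \ref{L1e} then yields a unique $p_u^{\xi,\eta}\in D(\mathcal L)\cap C(\bar\Omega)$ together with the estimate $|p_u^{\xi,\eta}|_{H^1(\Omega)\cap C(\bar\Omega)}\le c|h|_{L^r(\Omega)}$, where the constant $c$ is \emph{independent} of the nonnegative coefficient $\alpha$. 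This settles existence and uniqueness for every admissible $u,\xi,\eta$.

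For the uniform bound I would proceed in the same ``compact set plus Lemma \ref{metlem}'' spirit used in Proposition \ref{contstateper}. Fix $\delta_0$ and $M_0$ from that proposition, so that $|y_u^\xi|_{L^\infty(\Omega)}\le M_0$ whenever $d_C(\xi,0)\le\delta_0$, and let $B:=\max\{|b_1|_{L^\infty(\Omega)},|b_2|_{L^\infty(\Omega)}\}$ be the uniform bound on controls in $\mathcal U$. On the range $\bar\Omega\times[-M_0,M_0]\times[-B,B]$, Assumption \ref{A1}$(ii)$--$(iii)$ together with the state and control bounds control $g_y(\cdot,y_u^\xi,u)=w_y(\cdot,y_u^\xi)+s_y(\cdot,y_u^\xi)u$ in $L^\infty(\Omega)$ by some constant $M_1$ depending only on the data. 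For the perturbation term I would apply Lemma \ref{metlem} to the compact set $K:=\bar\Omega\times[-M_0,M_0]\times[-B,B]\subset\mathbb R^{n+2}$, producing $m\in\mathbb N$ with $|\eta_y|_{L^\infty(K)}\le 2^m d_C(\eta_y,0)$ whenever $d_C(\eta_y,0)\le 2^{-m}$; since $(x,y_u^\xi(x),u(x))\in K$ a.e., this gives $|\eta_y(\cdot,y_u^\xi,u)|_{L^\infty(\Omega)}\le 1$ under that smallness. Combining, $|h|_{L^\infty(\Omega)}\le M_1+1$, and the $\alpha$-independent estimate of Lemma \ref{L1e} yields $|p_u^{\xi,\eta}|_{L^\infty(\Omega)}\le c(\text{meas}\,\Omega)^{1/r}(M_1+1)=:M$. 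Taking $\delta:=\min\{\delta_0,2^{-m}\}$ completes the ``moreover'' part; an analogous application of Lemma \ref{metlem} on $\bar\Omega\times[-M_0,M_0]\subset\mathbb R^{n+1}$ with the hypothesis $d_C(\xi_y,0)\le\delta$ additionally secures a uniform $L^\infty$ control of $\alpha$, confirming that the coefficient is a legitimate element of $L^\infty(\Omega)$ uniformly over the admissible family.

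I do not expect a genuine analytical obstacle, because linearity trivializes existence; the care needed is organizational. First, the three perturbations live on compact sets of different dimensions ($\xi$ and $\xi_y$ on $\mathbb R^{n+1}$, $\eta_y$ on $\mathbb R^{n+2}$), so Lemma \ref{metlem} must be invoked separately, each with its own threshold $2^{-m}$, and the final $\delta$ is the minimum of these together with $\delta_0$. Second, and more conceptually, the decisive feature is that it is the \emph{sign} of $\alpha$, not its size, that makes the constant in Lemma \ref{L1e} uniform: this is exactly the property encoded in $\Upsilon_s$, and it is what lets the bound $M$ hold uniformly over \emph{all} admissible perturbations rather than merely over a norm-bounded family.
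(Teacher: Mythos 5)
Your proof is correct and follows essentially the route the paper intends: the paper only remarks that the proof is ``similar to the one of Proposition \ref{contstateper}'', and your argument is precisely that adaptation, with Lemma \ref{L1e} supplying existence, uniqueness, and the $\alpha$-independent estimate for the linear costate equation (exactly as in the unperturbed case, Proposition \ref{concost}), the nonnegativity of $d_y+\xi_y$ from the definition of $\Upsilon_s$ playing the decisive role, and Lemma \ref{metlem} on the compact sets $\bar\Omega\times[-M_0,M_0]$ and $\bar\Omega\times[-M_0,M_0]\times[-B,B]$ giving the uniform bound under the stated smallness of $d_C(\xi,0)+d_C(\xi_y,0)+d_C(\eta_y,0)$.
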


We denote $\Upsilon:=\Upsilon_{s}\times\Upsilon_{c}$, and write  $\zeta:=(\xi,\eta)$ for a generic element of $\Upsilon$. We endow $\Upsilon$ with the pseudometric $d_\Upsilon:\Upsilon\times\Upsilon\to[0,\infty)$ given by
\begin{align*}
d_{\Upsilon}(\zeta,\zeta'):=d_{C}(\xi,\xi')+d_{C}(\xi_y,\xi_y')+d_{C}(\eta_y,\eta_y')+d_{C}(\eta_u,\eta_u').
\end{align*}

\subsection{The stability result}

We are  now ready to state problem (\ref{costpp})-(\ref{systempp}) in a precise way. Given $\zeta\in\Upsilon$, problem $\mathcal P_\zeta$ is given by
\begin{align}\label{Perprob}
\quad\min_{u\in\mathcal U}\left\lbrace\mathcal J_\zeta(u):=\int_{\Omega}\Big[g(x,y_u^\xi,u)+\eta(x,y_u^\xi,u)\Big]\,dx\right\rbrace.
\end{align}
Due to the convexity of the cost in the control variable, each  problem  $\mathcal P_\zeta$ has at least one global solution. For each $\zeta\in\Upsilon$, we fix a local minimizer $\hat u_{\zeta}\in\mathcal U$ of problem 
$\mathcal P_{\zeta}$.  By the local minimum principle, for each $\zeta=(\xi,\eta)\in\Upsilon$, the triple $(\hat y_\zeta,\hat p_\zeta,\hat u_\zeta):=({y_{\hat u_\zeta}^\xi},{p_{\hat u_\zeta}^{\xi,\eta}},\hat u_{\zeta})$ satisfies the system
\begin{align}\label{s1perper}
\left\{ \begin{array}{cll}
0&=&\mathcal Ly-f(\cdot,y,u)-\xi(\cdot,y),\\
0&=&\mathcal Lp-H_{y}(\cdot,y,p,u)+\eta_y(\cdot,y,u)-\xi_y(\cdot,y)p,\\
0&\in& H_{u}(\cdot,y,p)+ \eta_u(\cdot,y,u) + N_{\mathcal U}(u).
\end{array} \right.
\end{align}

As  a consequence of Theorem \ref{Ssr}, we have the following result.

\begin{thrm}\label{Thmreg0}
	Let Assumption \ref{A2} hold. There exist positive numbers $\alpha,\alpha'$ and $c$ such that
	\begin{align*}
	|\hat y_{\zeta}-y_{\bar u}|_{L^2(\Omega)}+|\hat p_{\zeta}-p_{\bar u}|_{L^2(\Omega)}+|\hat u_\zeta-\bar u|_{L^1(\Omega)}\le c d_\Upsilon(\zeta,0)^{1/k^*}
	\end{align*}
	for all $\zeta\in \Upsilon$ such that $|\hat u_{\zeta}-\bar u|_{L^1(\Omega)}\le\alpha$ and $d_{\Upsilon}(\zeta,0)\le\alpha'$.
\end{thrm}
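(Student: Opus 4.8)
The plan is to realize the perturbed triple $\hat\psi_\zeta:=(\hat y_\zeta,\hat p_\zeta,\hat u_\zeta)$ as a solution of the \emph{unperturbed} optimality inclusion with a right-hand side built from the nonlinear perturbation data, and then invoke the strong H\"older subregularity of $\Phi$ from Theorem \ref{Ssr}. Reading off the perturbed system (\ref{s1perper}) line by line, I would set
\[
\xi^*:=\xi(\cdot,\hat y_\zeta),\qquad \eta^*:=-\eta_y(\cdot,\hat y_\zeta,\hat u_\zeta)+\xi_y(\cdot,\hat y_\zeta)\hat p_\zeta,\qquad \rho^*:=-\eta_u(\cdot,\hat y_\zeta,\hat u_\zeta),
\]
and $\zeta^*:=(\xi^*,\eta^*,\rho^*)$. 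Comparing with the definition (\ref{optmapping}) of $\Phi$ then shows directly that $\zeta^*\in\Phi(\hat\psi_\zeta)$; in particular the third line of (\ref{s1perper}) rearranges to $\rho^*\in H_u(\cdot,\hat y_\zeta,\hat p_\zeta)+N_{\mathcal U}(\hat u_\zeta)$.

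The core estimate is $d_{\mathcal Z}(\zeta^*,0)\le C\,d_\Upsilon(\zeta,0)$. Here the uniform $L^\infty$-bounds on the perturbed state and costate from Proposition \ref{contstateper} and Proposition \ref{contcostateper} guarantee that, for $d_\Upsilon(\zeta,0)$ below a threshold, the values $(x,\hat y_\zeta(x),\hat u_\zeta(x))$ and $(x,\hat y_\zeta(x))$ stay in one fixed compact set $K$ (recall $\bar\Omega$ is bounded and the controls are confined by $b_1,b_2$). On $K$, Lemma \ref{metlem} converts the compact-open metric into genuine $L^\infty$ control: for instance $|\rho^*|_{L^\infty(\Omega)}\le|\eta_u|_{L^\infty(K)}\le 2^m d_C(\eta_u,0)$, likewise $|\xi^*|_{L^\infty(\Omega)}\le 2^m d_C(\xi,0)$, and, using $|\hat p_\zeta|_{L^\infty(\Omega)}\le M$, $|\eta^*|_{L^\infty(\Omega)}\le 2^m\big(d_C(\eta_y,0)+M\,d_C(\xi_y,0)\big)$. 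Passing from $L^\infty(\Omega)$ to $L^2(\Omega)$ costs only a factor $(\text{meas}\,\Omega)^{1/2}$, so summing the four contributions yields the asserted bound with $C$ independent of $\zeta$.

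Before applying Theorem \ref{Ssr} I must verify its hypotheses for $(\hat\psi_\zeta,\zeta^*)$. That $\zeta^*\in\mathbb B_{\mathcal Z}(0;\alpha_2)$ follows from $d_{\mathcal Z}(\zeta^*,0)\le C\alpha'\le\alpha_2$ once $\alpha'$ is small. More delicate is $\hat\psi_\zeta\in\mathbb B_{\mathcal Y}(\bar\psi;\alpha_1)$: the hypothesis already gives $|\hat u_\zeta-\bar u|_{L^1(\Omega)}\le\alpha$, and for the state and costate I would subtract the defining equations for $\hat y_\zeta$ and $y_{\bar u}$ (respectively $\hat p_\zeta$ and $p_{\bar u}$) and arrange each difference in the form $\mathcal L w+a(\cdot)w=h$ with a nonnegative $a\in L^\infty(\Omega)$ — for the state by the monotonicity factoring of $d$ used in Proposition \ref{contstatel1}, for the costate by keeping $d_y(\cdot,\hat y_\zeta)$ on the left and moving the remainder to the right. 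Lemma \ref{L2e} then bounds each $L^2$-difference by $|h|_{L^1(\Omega)}\le c\big(|\hat u_\zeta-\bar u|_{L^1(\Omega)}+d_\Upsilon(\zeta,0)\big)$, whence $d_{\mathcal Y}(\hat\psi_\zeta,\bar\psi)\le C(\alpha+d_\Upsilon(\zeta,0))\le\alpha_1$ for $\alpha,\alpha'$ small. Theorem \ref{Ssr} then gives
\[
d_{\mathcal Y}(\hat\psi_\zeta,\bar\psi)\le\kappa\,d_{\mathcal Z}(\zeta^*,0)^{1/k^*}\le\kappa\,\big(C\,d_\Upsilon(\zeta,0)\big)^{1/k^*},
\]
which is the claim with $c:=\kappa C^{1/k^*}$.

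The main obstacle I anticipate is the bookkeeping in the second step: one must ensure a \emph{single} compact set $K$ and a \emph{single} exponent $m$ in Lemma \ref{metlem} serve all four perturbation components at once (they live in spaces of different dimension, $n+1$ for $\xi,\xi_y$ and $n+2$ for $\eta_y,\eta_u$), and that the smallness requirement of Lemma \ref{metlem}, namely $d_C(\cdot,0)\le 2^{-m}$ for each component, is met — this is precisely what fixes the threshold $\alpha'$ on $d_\Upsilon(\zeta,0)$. Everything else is a routine assembly of the a priori estimates (Lemma \ref{L2e} and Propositions \ref{contstatel1}, \ref{contstateper}, \ref{contcostateper}) already at our disposal.
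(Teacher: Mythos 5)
Your proposal is correct and follows essentially the same route as the paper's proof: you realize $(\hat y_\zeta,\hat p_\zeta,\hat u_\zeta)$ as a solution of the unperturbed inclusion with the same right-hand side $\bigl(\xi(\cdot,\hat y_\zeta),\,-\eta_y(\cdot,\hat y_\zeta,\hat u_\zeta)+\xi_y(\cdot,\hat y_\zeta)\hat p_\zeta,\,-\eta_u(\cdot,\hat y_\zeta,\hat u_\zeta)\bigr)$, convert $d_\Upsilon(\zeta,0)$ into a $d_{\mathcal Z}$-bound via the uniform bounds of Propositions \ref{contstateper} and \ref{contcostateper} together with Lemma \ref{metlem} on the fixed compact set $\bar\Omega\times[-M,M]$, verify the neighborhood hypotheses of strong H\"older subregularity, and invoke Theorem \ref{Ssr}. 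The bookkeeping concern you raise (a single compact set and exponent $m$ serving all perturbation components) is resolved in the paper exactly as you anticipate, by repeating the Lemma \ref{metlem} argument componentwise and absorbing the resulting constants into one $c_0$ and one threshold $\delta$.
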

\begin{proof}
	By Theorem \ref{Ssr}, the mapping $\Phi$ is  strongly H\"older subregular at $(\bar\psi,0)$ with exponent $1/k^*$. 
	Let $\alpha_1,\alpha_2$ and $\kappa$ be the positive numbers in the definition of strong subregularity.
	By Proposition \ref{contstateper} and \ref{contcostateper} there exist positive numbers $M$ and $\delta_0$ such that 
	\begin{align*}
	|y_u^\xi|_{L^\infty(\Omega)}+|p^{\xi,\eta}_u|_{L^\infty(\Omega)}\le M
	\end{align*}
	for all $u\in\mathcal U$ and  $\zeta\in\Upsilon$ with $d_{\Upsilon}(\zeta,0)\le\delta_0$. Let $K:=\bar\Omega\times[-M,M]$. By Lemma \ref{metlem}, there exists $m\in\mathbb N$ such that 
	\begin{align*}
	|\xi(\cdot, y_{u}^\xi)|_{L^2(\Omega)}\le \text{meas}\hspace*{0.06cm}\Omega^{\frac{1}{2}} |\xi|_{L^\infty(K)}\le 2^{m} \text{meas}\hspace*{0.06cm}\Omega^{\frac{1}{2}}d_{C}(\xi,0)\le 2^m  \text{meas}\hspace*{0.06cm}\Omega^{\frac{1}{2}} d_{\Upsilon}(\zeta,0)
	\end{align*}
	for all $u\in\mathcal U$ and $\zeta\in\Upsilon$ with $d_{\Upsilon}(\zeta,0)\le \min\{2^{{-m}},\delta_0\}$.  Repeating this argument, we can find positive numbers $\delta$ and $c_0$ such that 
	\begin{align}\label{oflin}
	|\xi(\cdot, y_{u}^\xi)|_{L^2(\Omega)}+|\xi_{y}(\cdot,y^{\xi}_u)p_{u}^{\xi,\eta}|_{L^2(\Omega)}+|\eta_y(\cdot,y_{u}^\xi,u)|_{L^2{\Omega}}+|\eta_{u}(\cdot,y_{u}^\xi,u)|_{L^\infty}\le c_0 d_{\Upsilon}(\zeta,0)
	\end{align}
	for all $u\in\mathcal U$ and $\zeta\in\Upsilon$ with $d_{\Upsilon}(\zeta,0)\le \delta$. Using Proposition \ref{contstatel1} and Lemma \ref{metlem}, one can find positive numbers $\alpha$ and $\delta'$ such that 
	\begin{align*}
	|\hat y_{\zeta}-y_{\bar u}|_{L^2(\Omega)}+|\hat p_{\zeta}-p_{\bar u}|_{L^2(\Omega)}+|\hat u_\zeta-\bar u|_{L^1(\Omega)}\le \alpha_1
	\end{align*}
	for all $\zeta\in \Upsilon$ with $|\hat u_\zeta-\bar u|_{L^1(\Omega)}\le\alpha$ and $d_{\Upsilon}(\zeta,0)\le\delta'$. Observe that by (\ref{s1perper}), we have
	\begin{align*}
	\left( \begin{array}{c}
	\xi(\cdot,\hat y_\zeta) \\
	-\eta_y(\cdot,\hat y_\zeta,\hat u_\zeta)+\xi_y(\cdot,\hat y_\zeta)\hat p_\zeta \\
	-\eta_u(\cdot,\hat y_\zeta,\hat u_\zeta)
	\end{array} \right) \in \Phi(\hat y_\zeta,\hat p_\zeta,\hat u_{\zeta})
	\end{align*}	
	for all $\zeta\in\Upsilon$. Let $\alpha':=\min\{c_0^{-1}\alpha_2,\delta,\delta'\}$. Then by H\"older subregularity of $\Phi$ and (\ref{oflin}),
	\begin{align*}
	|\hat y_{\zeta}-y_{\bar u}|_{L^2(\Omega)}+|\hat p_{\zeta}-p_{\bar u}|_{L^2(\Omega)}+|\hat u_\zeta-\bar u|_{L^1(\Omega)}\le \kappa c_0^{\frac{1}{k^*}} d_{\Upsilon}(\zeta,0)^{\frac{1}{k^*}}
	\end{align*}
	for all $\zeta\in\Upsilon$ such that $|\hat u_\zeta-\bar u|_{L^1(\Omega)}\le\alpha$ and $d_{\Upsilon}(\zeta,0)\le \alpha'$. The result follows defining $c:=\kappa c_0^{\frac{1}{k^*}}$.
\end{proof}

\subsection{An application: Tikhonov regularization}
In what follows we present an application of the theory derived in the previous chapters, namely the so-called Tikhonov regularization. 
For a more detailed description and an account of the state of art, the reader 
is referred to \cite{MR3810878,MR2986517,MR3780469}. We derive estimates on the convergence rate of the solution of the regularized problem when the regularization parameter tends to zero. 
The results that appear in the literature require the so-called structural assumption and positive-definiteness 
(in some sense) of the  second derivative of the objective functional. Using Theorem \ref{Ssr}, we can obtain 
this results under weaker assumptions than used in the literature so far. One can compare this results with 
\cite[Theorem 4.4]{MR3810878} (where a tracking problem with semilinear elliptic equation  is considered) 
when it comes to stability of the controls. In Section \ref{Section5}, we give more details on how the assumptions 
in the literature interplay with Assumption \ref{A2}.

We consider the following family of problems $\{\mathcal P_{\varepsilon}\}_{\varepsilon\ge0}$. 
\begin{align}\label{costp}
\quad\min_{u\in\mathcal U}\left\lbrace\int_{\Omega}g(x,y,u)\,dx+\frac{\varepsilon}{2}\int_{\Omega} u^2\,dx\right\rbrace,
\end{align}
subject to 
\begin{align}\label{systemp}
\left\{ \begin{array}{cclcc}
-\dive\big(A(x)\nabla y\big)+d(x,y)&=&\beta(x)u& \text{in}& \Omega  \\
\\ A(x)\nabla y\cdot \nu+b(x)y&=&0 &\text{on}& \partial\Omega.
\end{array} \right.
\end{align}

\begin{lmm}\label{lemfin}
		Let Assumption \ref{A2} be fulfilled. For every $\alpha>0$ there exists $\varepsilon_\alpha>0$ such that for every $\varepsilon\in(0,\varepsilon_\alpha)$ problem $\mathcal P_{\varepsilon}$ has a local solution  $\hat u_{\varepsilon}\in\mathcal U\cap \mathbb B_{L^1}(\bar u;\alpha)$.
\end{lmm}
\begin{proof}
	Let $\alpha>0$ be arbitrary. By Remark \ref{strictlocal}, $\bar u$ is a strict local minimizer, hence there exists $\alpha^*\le \alpha$ such that $\mathcal J(\bar u)<\mathcal J(u)$ for all $\bar u\neq u\in\mathcal U\cap \mathbb B_{L^1(\Omega)}(\bar u;\alpha^*)$.
	Consider the family of problems $\mathcal P^*_{\varepsilon}$ given by
	\begin{align}\label{label2}
		\displaystyle\min_{\mathcal U\cap \mathbb B_{L^1}(\bar u;\alpha^*)}\left\lbrace \mathcal J(u)+\frac{\varepsilon}{2}|u|_{L^2(\Omega)}^2\right\rbrace .
	\end{align}
	Each problem  $\mathcal P^*_{\varepsilon}$ has a global solution $\hat u_\varepsilon$. There exists $\varepsilon^*>0$ 
	such that $|\hat u_{\varepsilon}-\bar u|_{L^1(\Omega)}\le \alpha^*/2$ for all $\varepsilon\in(0,\varepsilon^*)$.  	Suppose the opposite. Then there exists a sequence $\{\varepsilon_{k}\}_{k=1}^\infty$ converging to zero such that 
	$|\hat u_{\varepsilon_k}-\bar u|_{L^1(\Omega)}>\alpha^*/2$ for all $k\in\mathbb N$. Since $\mathcal U\cap \mathbb B_{L^1}(\bar u;\alpha^*)$ is weakly 
	compact in $L^2(\Omega)$, we may assume without loss of generality that $u_{\varepsilon_{k}}\rightharpoonup u^*$ for some 
	$u^*\in\mathcal U\cap \mathbb B_{L^1}(\bar u;\alpha^*)$. Since $y_{\hat u_{\varepsilon_k}}\to y_{u^*}$ in $C(\bar\Omega)$, we obtain that
	\begin{align*}
	\mathcal J(u^*) \leq \lmi_{k\to\infty}\Big[\mathcal J(\hat u_{\varepsilon_k})+\frac{\varepsilon_{k}}{2}|\hat u_{\varepsilon_{k}}|_{L^2(\Omega)}^2\Big]
	\le \lmi_{k\to\infty}\Big[\mathcal J(\bar u)+\frac{\varepsilon_{k}}{2}|\bar u|_{L^2(\Omega)}^2\Big]=\mathcal J(\bar u).
	\end{align*}
	Therefore $u^*=\bar u$ since $u^*\in\mathcal U\cap\mathbb B_{L^1(\Omega)}(\bar u;\alpha^*)$ and $\bar u$ is strict local minimum. 
	By Proposition \ref{bangbang}, $u^*=\bar u$ is bang-bang. Weak convergence in $L^2(\Omega)$ 
	implies weak convergence in $L^1(\Omega)$; consequently, by Lemma \ref{weakimpliesstrong}, 
	$\hat u_{\varepsilon_{k}}\to u^*$ in $L^1(\Omega)$, which is a contradiction.  We can see that for all $\varepsilon\le\varepsilon^*$, $\hat u_{\varepsilon}$ is a local solution of problem $\mathcal P_{\varepsilon}$. Indeed, if $u\in\mathcal U\cap B_{L^1(\Omega)}(\hat u_{\varepsilon}; \alpha^*/2)$, then
	\begin{align*}
		|u-\bar u|_{L^1(\Omega)}\le |u-\hat u_{\varepsilon}|_{L^1(\Omega)}+|\hat u_{\varepsilon}-\bar u|_{L^1(\Omega)}\le \alpha^*,
	\end{align*}
	and consequently, as $\hat u_{\varepsilon}$ is a global solution of problem $\mathcal P^*_{\varepsilon}$,
	\begin{align*}
		\mathcal J(\hat u_{\varepsilon})+\frac{\varepsilon}{2}|\hat u_{\varepsilon}|_{L^2(\Omega)}\le \mathcal J(u)+\frac{\varepsilon}{2}|u|_{L^2(\Omega)}.
	\end{align*}
	The result follows defining $\varepsilon_\alpha:=\varepsilon^*$.

\end{proof}

\begin{thrm}\label{Treg}
	Let Assumption \ref{A2} be fulfilled. Then there exist positive numbers $\alpha,\kappa$ and $\varepsilon_0$ such that for every $\varepsilon\in(0,\varepsilon_0)$ problem $\mathcal P_{\varepsilon}$ has a local solution $\hat u_{\varepsilon}\in\mathbb B_{L^1}(\bar u;\alpha)$. Moreover, 
	\begin{align} \label{Ereg}
	|\hat u_{\varepsilon}-\bar u|_{L^1(\Omega)}\le \kappa\hspace*{0.03cm}\varepsilon^{1/k^*}
	\end{align}
	for every local solution $\hat u_{\varepsilon}$ of problem $\mathcal P_{\varepsilon}$ such that $\varepsilon\in(0,\varepsilon_0)$ and $|\hat u_{\varepsilon}-\bar u|_{L^1(\Omega)} \le  \alpha$.
\end{thrm}   

\begin{proof}
	The first claim follows from Lemma \ref{lemfin}.
	Let $\alpha,\alpha'$and $c$ be the positive numbers in Theorem \ref{Thmreg0}. Define $\eta_\varepsilon:\mathbb R\to\mathbb R$ by $\eta_\varepsilon(u):=\varepsilon u^2/2$ and $\zeta_\varepsilon:=(0,\eta_\varepsilon)\in\Upsilon$ for each $\varepsilon>0$. Note that
	\begin{align*}
	d_C(\eta_\varepsilon,0):=\sum_{m=1}^\infty \frac{1}{2^m}\frac{\varepsilon m^2/2}{1+\varepsilon m^2/2}=\varepsilon\sum_{m=1}^\infty \frac{1}{2^m}\frac{m^2}{2+\varepsilon m^2}\le\varepsilon\sum_{m=1}^\infty\frac{m^2}{2^{m+1}}=3\varepsilon
	\end{align*}
	for all $\varepsilon>0$. Analogously,
	\begin{align*}
	d_C(\frac{\partial \eta_\varepsilon}{\partial u},0):=\sum_{m=1}^\infty \frac{1}{2^m}\frac{\varepsilon m}{1+\varepsilon m}\le\varepsilon\sum_{m=1}^\infty\frac{m}{2^m}=2\varepsilon
	\end{align*}
	for all $\varepsilon>0$. We conclude that $d_{\Upsilon}(\zeta_\varepsilon,0)\le5\varepsilon\le \alpha'$
	for all $\varepsilon\in (0,\varepsilon_0)$, where $\varepsilon_0:=\alpha'/5$. By Theorem \ref{Thmreg0}, 
	\begin{align*}
	|\hat u_\varepsilon-\bar u|_{L^1(\Omega)}\le 5^{\frac{1}{k^*}}c\varepsilon^\frac{1}{k^*}
	\end{align*}
	for all $\varepsilon\in(0,\varepsilon_0)$ such that $|\hat u_\varepsilon-\bar u|_{L^1(\Omega)}\le\alpha$. 
 \end{proof}


\section{Assumptions related {to} subregularity}\label{Section5}

In this section, we gather some results concerning Assumption \ref{A2}, in order to provide sufficient 
conditions under which it is fulfilled. Furthermore, we analyze related assumptions and their relation between themselves. 
Recall that $\bar u\in\mathcal U$ is a local solution of problem (\ref{cost})--(\ref{system}).
Since $\bar u\in\mathcal U$ satisfies the variational inequality (\ref{varin}), we have
\begin{align*}
\bar u(x)=\left\{ \begin{array}{lcc}
b_{1}(x) &   if  & \sigma_{\bar u}(x)>0 \\
\\ b_2(x)&  if & \sigma_{\bar u}(x)<0.
\end{array} \right.
\end{align*}

We introduce the following extended cone suggested in \cite{Casascone}. For a fixed $\tau>0$ define
\begin{align*}
C_{\bar u}^\tau=\left\lbrace v\in L^2(\Omega): v(x)\left\{ \begin{array}{cll}
=0&  \text{if} & |\sigma_{\bar u}(x)|>\tau \mbox{ or } \bar u(x) \in (b_1(x), b_2(x)) \\
\ge 0 &   \text{if}  & |\sigma_{\bar u}(x)| \leq \tau \mbox{ and } \bar u(x) = b_1(x) \\
\le 0&  \text{if} & |\sigma_{\bar u}(x)| \leq \tau \mbox{ and } \bar u(x) = b_2(x)
\end{array} \right. \right\rbrace .
\end{align*}

We introduce the following modification of  Assumption \ref{A2}.

\bino
{\bf Assumption 2${}'$.} {\em
	There exist positive numbers $\alpha_0$ and $\gamma_0$ such that
	\begin{align*}
	\int_{\Omega}\sigma_{\bar u}(u-\bar u)\,dx+\Lambda(u-\bar u)\ge\gamma_0|u-\bar u|_{L^1(\Omega)}^{{k^*}+1},
	\end{align*}
	for all $u\in\mathcal U$ with $u - \bar u \in C_{\bar u}^\tau \cap {\mathbb B}_{L^1(\Omega)}(\bar u; \al_0)$.
}

\bino
This assumption is seemingly weaker than Assumption \ref{A2}. However, we will prove that the two assumptions are equivalent.
Before that, for technical purposes, we introduce the bilinear form $\Gamma:L^{2}(\Omega)\times L^{2}(\Omega)\to\mathbb R$ given by
\begin{align}\label{bilfor}
\Gamma(v_1,v_2):=\frac{1}{2}\int_{\Omega}\Big[\pi_{v_1}v_2+\pi_{v_2}v_1\Big]\,dx.
\end{align}
The bilinear form is particularly useful because of the following property.
\begin{align}\label{remarkgamma}
\Lambda(v_1+v_2)=\Gamma(v_1,v_1)+2\hspace*{0.02cm}\Gamma(v_1,v_2)+\Gamma(v_2,v_2)\quad\forall v_1,v_2\in L^2(\Omega).
\end{align}
We will require the following technical lemma.
\begin{lmm}\label{Lemgamma}
	For every positive number $M$, there exists a positive number $c$ such that 
	\begin{align*}
	|\Gamma(v_1,v_2)|\le c|v_1|_{L^1(\Omega)}^{1/2}|v_2|_{{L^1(\Omega)}}
	\end{align*}
	for all $v_1,v_2\in \mathbb B_{L^\infty}(0;M)$.
\end{lmm}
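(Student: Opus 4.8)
The plan is to work directly from the symmetrized definition (\ref{bilfor}) and estimate the two integrals $\int_\Omega \pi_{v_1}v_2\,dx$ and $\int_\Omega \pi_{v_2}v_1\,dx$ \emph{separately}, but pairing their factors with \emph{different} Hölder exponents so that in each integral $v_1$ carries the half power and $v_2$ the full power. The one elementary ingredient needed is the interpolation bound available on the bounded ball: if $|v|_{L^\infty(\Omega)}\le M$ then $\int_\Omega|v|^2\,dx\le M\int_\Omega|v|\,dx$, so
\begin{align*}
|v|_{L^2(\Omega)}\le M^{1/2}|v|_{L^1(\Omega)}^{1/2}\qquad\forall v\in\mathbb B_{L^\infty}(0;M).
\end{align*}

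The decisive preliminary observation is that the exponent $r=2$ is admissible in Proposition \ref{furprop3}(i): since $n\in\{2,3\}$ we have $n/2<2$, hence $2>n/2$. Fixing $r=2$, Proposition \ref{furprop3}(i) furnishes a constant $c_1>0$ with $|\pi_{v_1}|_{L^\infty(\Omega)}\le c_1|v_1|_{L^2(\Omega)}\le c_1 M^{1/2}|v_1|_{L^1(\Omega)}^{1/2}$, while Proposition \ref{furprop3}(ii) furnishes $c_2>0$ with $|\pi_{v_2}|_{L^2(\Omega)}\le c_2|v_2|_{L^1(\Omega)}$. (Both propositions apply because $v_1,v_2\in L^\infty(\Omega)\subset L^2(\Omega)\cap L^2(\Omega)$.)

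With these two bounds in hand I would estimate the first integral by the $L^\infty$–$L^1$ Hölder inequality and the second by the $L^2$–$L^2$ Cauchy–Schwarz inequality together with the interpolation bound applied now to $v_1$,
\begin{align*}
\Big|\int_\Omega \pi_{v_1}v_2\,dx\Big|\le |\pi_{v_1}|_{L^\infty(\Omega)}|v_2|_{L^1(\Omega)}\le c_1 M^{1/2}|v_1|_{L^1(\Omega)}^{1/2}|v_2|_{L^1(\Omega)},
\end{align*}
\begin{align*}
\Big|\int_\Omega \pi_{v_2}v_1\,dx\Big|\le |\pi_{v_2}|_{L^2(\Omega)}|v_1|_{L^2(\Omega)}\le c_2 M^{1/2}|v_2|_{L^1(\Omega)}|v_1|_{L^1(\Omega)}^{1/2}.
\end{align*}
Adding these and recalling (\ref{bilfor}) then yields the claim with $c:=\tfrac12(c_1+c_2)M^{1/2}$.

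The step I expect to be the main obstacle is precisely the choice of Hölder pairings so that each integral produces exactly $|v_1|_{L^1(\Omega)}^{1/2}|v_2|_{L^1(\Omega)}$. The naive symmetric choice (Cauchy–Schwarz in $L^2$ for both integrals) would instead yield $|v_1|_{L^1(\Omega)}|v_2|_{L^1(\Omega)}^{1/2}$ in the first integral, which cannot be absorbed into the desired right-hand side as $v_2\to0$. The asymmetry of the target therefore forces one to measure $\pi_{v_1}$ in $L^\infty$ (which places the half power on $v_1$ through its $L^2$-to-$L^\infty$ regularity, legitimate \emph{exactly because} $r=2$ is allowed when $n\le 3$) while measuring $\pi_{v_2}$ in $L^2$ (keeping the full power on $v_2$). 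Everything else reduces to a direct application of Proposition \ref{furprop3} and the elementary interpolation inequality above.
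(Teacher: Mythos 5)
Your proof is correct and coincides essentially with the paper's own argument: both estimate $\int_\Omega \pi_{v_1}v_2\,dx$ via the $L^\infty$--$L^1$ pairing (using Proposition \ref{furprop3}(i) with $r=2$, admissible since $n\in\{2,3\}$) and $\int_\Omega \pi_{v_2}v_1\,dx$ via Cauchy--Schwarz with Proposition \ref{furprop3}(ii), combined with the interpolation $|v|_{L^2(\Omega)}\le M^{1/2}|v|_{L^1(\Omega)}^{1/2}$ on $\mathbb B_{L^\infty}(0;M)$, arriving at the same constant $c=\tfrac{1}{2}(c_1+c_2)M^{1/2}$. Your added remarks on why the asymmetric pairing is forced are accurate but not a departure from the paper's route.
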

\begin{proof}
	By Proposition \ref{furprop3}, there exist $c_1,c_2>0$ such that $|\pi_{v}|_{L^\infty(\Omega)}\le c_1|v|_{L^2(\Omega)}$ and $|\pi_{v}|_{L^2(\Omega)}\le c_2|v|_{L^1(\Omega)}$ for all $v\in L^2(\Omega)$. Let $M>0$ be arbitrary. Observe that 
	\begin{align*}
	\Big|\int_{\Omega}\pi_{v_1}v_2\, dx\Big|\le |\pi_{v_1}|_{L^\infty(\Omega)}|v_2|_{L^1(\Omega)}\le c_1M^{\frac{1}{2}}|v_1|_{L^1(\Omega)}^{\frac{1}{2}}|v_2|_{L^1(\Omega)},
	\end{align*}
	and that 
	\begin{align*}
	\Big|\int_{\Omega}\pi_{v_2}v_1\, dx\Big|\le |\pi_{v_2}|_{L^2(\Omega)}|v_1|_{L^2(\Omega)}\le c_2M^{\frac{1}{2}}|v_1|_{L^1(\Omega)}^{\frac{1}{2}}|v_2|_{L^1(\Omega)}
	\end{align*}
	for all $v_1,v_2\in \mathbb B_{L^\infty}(0;M)$. There result follows defining $c:=2^{-1}(c_1+c_2)M^{\frac{1}{2}}$.
\end{proof}

\begin{prpstn} \label{PA2}
	Assumptions \ref{A2} and 2${\,}'$ are equivalent.
\end{prpstn}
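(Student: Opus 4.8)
The plan is to prove the two implications separately. The implication from Assumption \ref{A2} to Assumption 2${}'$ is immediate: every direction $v=u-\bar u$ admissible in 2${}'$ (those lying in $C_{\bar u}^\tau$ with $|v|_{L^1(\Omega)}\le\alpha_0$) is in particular a direction with $|v|_{L^1(\Omega)}\le\alpha_0$, which is the class tested by Assumption \ref{A2}; hence the very same constants $\alpha_0,\gamma_0$ work. All the substance is in the converse.

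For $2{}'\Rightarrow\ref{A2}$ I would fix $u\in\mathcal U$ with $t:=|u-\bar u|_{L^1(\Omega)}$ small and split $v:=u-\bar u=v_1+v_2$, where $v_1:=v\,\chi_{\{|\sigma_{\bar u}|\le\tau\}}$ and $v_2:=v\,\chi_{\{|\sigma_{\bar u}|>\tau\}}$. The idea is that $v_1$ is an admissible direction for 2${}'$, while $v_2$ is governed by a genuinely \emph{linear} (hence dominant) term. Using that $\bar u$ satisfies the variational inequality (\ref{varin}) — so $\bar u=b_1$ where $\sigma_{\bar u}>0$, $\bar u=b_2$ where $\sigma_{\bar u}<0$, and $\sigma_{\bar u}=0$ wherever $\bar u$ is interior — one checks that $\bar u+v_1\in\mathcal U$, that $v_1\in C_{\bar u}^\tau$, and that pointwise $\sigma_{\bar u}v_2=|\sigma_{\bar u}|\,|v_2|\ge\tau|v_2|$, so that $\int_\Omega\sigma_{\bar u}v_2\,dx\ge\tau|v_2|_{L^1(\Omega)}$. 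Assumption 2${}'$ applied to $v_1$ then gives $\int_\Omega\sigma_{\bar u}v_1\,dx+\Lambda(v_1)\ge\gamma_0|v_1|_{L^1(\Omega)}^{k^*+1}$.

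Next I would expand, using (\ref{remarkgamma}) together with $\Lambda(w)=\Gamma(w,w)$,
\[
\int_\Omega\sigma_{\bar u}v\,dx+\Lambda(v)=\Big[\int_\Omega\sigma_{\bar u}v_1\,dx+\Lambda(v_1)\Big]+\int_\Omega\sigma_{\bar u}v_2\,dx+2\Gamma(v_1,v_2)+\Gamma(v_2,v_2),
\]
and absorb the two quadratic remainders into the linear term. Letting $M$ bound $\mathcal U-\mathcal U$ in $L^\infty(\Omega)$, Lemma \ref{Lemgamma} yields $|\Gamma(v_1,v_2)|\le c|v_1|_{L^1(\Omega)}^{1/2}|v_2|_{L^1(\Omega)}\le ct^{1/2}|v_2|_{L^1(\Omega)}$ and $|\Gamma(v_2,v_2)|\le c|v_2|_{L^1(\Omega)}^{3/2}\le ct^{1/2}|v_2|_{L^1(\Omega)}$, so for $t$ small enough (namely $3ct^{1/2}\le\tau/2$) their contribution is at most $\tfrac{\tau}{2}|v_2|_{L^1(\Omega)}$. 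This leaves $\int_\Omega\sigma_{\bar u}v\,dx+\Lambda(v)\ge\gamma_0|v_1|_{L^1(\Omega)}^{k^*+1}+\tfrac{\tau}{2}|v_2|_{L^1(\Omega)}$. Since $|v_2|_{L^1(\Omega)}\le t\le1$ and $k^*+1>1$ give $|v_2|_{L^1(\Omega)}\ge|v_2|_{L^1(\Omega)}^{k^*+1}$, while $(|v_1|_{L^1(\Omega)}+|v_2|_{L^1(\Omega)})^{k^*+1}\le2^{k^*}(|v_1|_{L^1(\Omega)}^{k^*+1}+|v_2|_{L^1(\Omega)}^{k^*+1})$ by convexity and the supports are disjoint, the right-hand side dominates $\gamma\,t^{k^*+1}$ with $\gamma:=2^{-k^*}\min\{\gamma_0,\tau/2\}$, which is exactly Assumption \ref{A2} with threshold $\alpha:=\min\{\alpha_0,1,(\tau/6c)^2\}$.

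The decisive points, where I expect the difficulty to lie, are twofold. First, the correct choice of argument in Lemma \ref{Lemgamma} so that the strongly active mass $|v_2|_{L^1(\Omega)}$ appears to the \emph{first} power and is beaten by the order-one contribution $\tau|v_2|_{L^1(\Omega)}$; this ``linearity dominates the sign-indefinite quadratic remainder'' mechanism is the heart of the argument and is precisely why the margin $\tau>0$ is indispensable. Second, the admissibility $v_1\in C_{\bar u}^\tau$ requires that the set where $\bar u$ takes interior values be negligible, since on that set $\sigma_{\bar u}=0$ and $v_1=v$ need not vanish although the cone forces it to. This is where the bang-bang structure enters: the interior set carries no switching function and hence cannot be probed by cone directions, so the equivalence is genuinely an assertion about the bang-bang reference controls singled out by Proposition \ref{bangbang}. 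Tracking that all constants depend only on $\tau,\gamma_0,M$ and $c$ (and not on $u$) is then routine.
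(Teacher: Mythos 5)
Your argument is, in its computational core, the same as the paper's: the identical splitting $v_1=(u-\bar u)\chi_{\{|\sigma_{\bar u}|\le\tau\}}$, $v_2=(u-\bar u)\chi_{\{|\sigma_{\bar u}|>\tau\}}$, the same lower bound $\int_\Omega\sigma_{\bar u}v_2\,dx\ge\tau|v_2|_{L^1(\Omega)}$ coming from the variational inequality (\ref{varin}), and the same absorption of the cross and quadratic terms through Lemma \ref{Lemgamma}. The only divergence is the final recombination: the paper uses the reverse triangle inequality together with Bernoulli's inequality and retains the constant $\gamma_0$, whereas you use $(a+b)^{k^*+1}\le 2^{k^*}\big(a^{k^*+1}+b^{k^*+1}\big)$ and end with the degraded constant $2^{-k^*}\min\{\gamma_0,\tau/2\}$; both are acceptable, since Assumption \ref{A2} only asks for some positive constant. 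Your remark that the implication from Assumption \ref{A2} to Assumption 2${}'$ is immediate also matches the paper, which treats only the converse.

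The genuine problem lies exactly at the step you yourself flag as delicate, and your resolution of it is circular. For $v_1\in C_{\bar u}^\tau$ you need $v_1=0$ a.e. on the set $I:=\{x\in\Omega:\ b_1(x)<\bar u(x)<b_2(x)\}$; since $\sigma_{\bar u}=0$ a.e. on $I$ by (\ref{varin}), this set sits inside $\{|\sigma_{\bar u}|\le\tau\}$, where $v_1$ equals $u-\bar u$ and has no reason to vanish. So the membership holds only if $I$ is a null set, i.e. if $\bar u$ is bang-bang. You justify this by citing Proposition \ref{bangbang}, but that proposition derives the bang-bang property \emph{from Assumption \ref{A2}}, which is precisely the conclusion of the implication (Assumption 2${}'$ $\Rightarrow$ Assumption \ref{A2}) you are in the middle of proving; and Assumption 2${}'$ cannot substitute for it, because the oscillating perturbations used in the proof of Proposition \ref{bangbang} are supported on $I$ and hence are not cone directions, so Assumption 2${}'$ is silent about them. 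The gap is substantive rather than notational: on $I$ the linear term vanishes identically, so no part of the splitting argument controls the mass of $u-\bar u$ there. Indeed, for a problem linear in $(y,u)$ (take $w\equiv 0$, $s=s(x)$, $d$ linear in $y$, so that $p_u\equiv 0$, $\sigma_{\bar u}\equiv s$ and $\Lambda\equiv 0$), with $s$ vanishing exactly on a positive-measure set where $\bar u$ takes interior values and satisfying the measure-growth condition of Assumption \ref{A3} off that set, Assumption 2${}'$ holds while Assumption \ref{A2} fails for perturbations supported on $I$. In fairness, the paper's own proof simply writes ``Clearly $v_1\in C_{\bar u}^\tau$'' and passes over the same point; you deserve credit for detecting it, but closing it requires either adding the bang-bang property of $\bar u$ (equivalently, that $I$ is null) as a hypothesis, or deriving it from something other than Assumption \ref{A2}.
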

\begin{proof}{}
	Clearly Assumption \ref{A2} implies 2${\,}'$. 
	Let $\alpha_0$ and $\gamma_0$ be the numbers in Assumption 2${\,}'$. Let $u\in\mathcal U$ and define
	\begin{align*}
	v_1(x):=\left\{ \begin{array}{lcc}
	u(x)-\bar u(x) &   if  & |\sigma_{\bar u}(x)|\le\tau \\
	\\ 0&  if & |\sigma_{\bar u}(x)|>\tau,
	\end{array} \right.
	\end{align*}
	and
	\begin{align*}
	v_2(x):=\left\{ \begin{array}{lcc}
	0&   if  & |\sigma_{\bar u}(x)|\le\tau \\
	\\ u(x)-\bar u(x)&  if & |\sigma_{\bar u}(x)|>\tau.
	\end{array} \right.
	\end{align*}
	Clearly $v_1\in C_{\bar u}^\tau$ and $v_1+v_2=u-\bar u$.  Let $M$ be a bound for $\mathcal U$ in $L^\infty(\Omega)$, and let $c$ be the positive number in Lemma \ref{Lemgamma} corresponding to $2M$. 
	By Assumption 2${\,}'$,
	\begin{align*}
	\int_{\Omega}\sigma_{\bar u}(u-\bar u)\,dx&=\int_{\Omega}\sigma_{\bar u}v_1\,dx+\int_{|\sigma_{\bar u}|>\tau}\sigma_{\bar u}v_2\,dx\\
	&=\int_{\Omega}\sigma_{\bar u}v_1\,dx+\Lambda(v_1)-\Lambda(v_1)+\int_{|\sigma_{\bar u}|>\tau}\sigma_{\bar u}v_2\,dx\\
	&\ge \gamma_0|v_1|^{k+1}+\tau|v_2|_{L^1(\Omega)}-\Lambda(v_1),
	\end{align*}
	and
	\begin{align*}
	\Lambda(u-\bar u)&=\Lambda(v_1)+2\Gamma(v_1,v_2)+\Lambda(v_2)\\
	&\ge\Lambda(v_1)-2c|v_1|_{L^1(\Omega)}^{1/2}|v_2|_{{L^1(\Omega)}}-c|v_2|_{L^1(\Omega)}^{1/2}|v_2|_{{L^1(\Omega)}}\\
	&\ge\Lambda(v_1)-3c|v_2|_{L^1(\Omega)}|u-\bar u|_{L^1(\Omega)}^{1/2}
	\end{align*}
	for $u\in\mathcal U$ with $|u-\bar u|_{{L^1(\Omega)}}\le\alpha_0$. Thus
	\begin{align*}
	\int_{\Omega}\sigma_{\bar u}(u-\bar u)\,dx+\Lambda(u-\bar u)&\ge\gamma_0|v_1|^{k+1}+\tau|v_2|_{L^1(\Omega)}-3c|v_2|_{L^1(\Omega)}|u-\bar u|_{L^1(\Omega)}^{1/2}\\
	&= \gamma_0|v_1|^{k+1}+|v_2|_{L^1(\Omega)}\Big(\tau-3c|u-\bar u|_{L^1(\Omega)}^{1/2}\Big)
	\end{align*}
	for $u\in\mathcal U$ with $|u-\bar u|_{{L^1(\Omega)}}\le\alpha_0$. Now, by the reverse triangle inequality and Bernoulli's inequality (consider without loss of generality $u\neq\bar u$)
	\begin{align*}
	|v_1|_{L^1(\Omega)}^{k+1}&=|(u-\bar u)-v_2|_{L^1(\Omega)}^{k+1}\ge \Big(|u-\bar u|_{L^1(\Omega)}-|v_2|_{L^1(\Omega)}\Big)^{k+1}\\
	&=|u-\bar u|_{L^1(\Omega)}^{k+1}\Big(1-\frac{|v_2|_{L^1(\Omega)}}{|u-\bar u|_{L^1(\Omega)}}\Big)^{k+1}\ge|u-\bar u|_{L^1(\Omega)}^{k+1}\Big(1-(k+1)\frac{|v_2|_{L^1(\Omega)}}{|u-\bar u|_{L^1(\Omega)}}\Big)\\
	&=|u-\bar u|_{L^1(\Omega)}^{k+1}-(k+1)|u-\bar u|_{L^1(\Omega)}^k|v_2|_{L^1(\Omega)}.
	\end{align*}
	Consequently,
	
	\begin{align*}
	\int_{\Omega}\sigma_{\bar u}(u-\bar u)\,dx+\Lambda(u-\bar u)&\ge\gamma_0|v_1|^{k+1}+|v_2|_{L^1(\Omega)}\Big(\tau-3c|u-\bar u|_{L^1(\Omega)}^{1/2}\Big)\\
	&\ge\gamma_0|u-\bar u|_{L^1(\Omega)}^{k+1}-\gamma_0(k+1)|u-\bar u|_{L^1(\Omega)}^k|v_2|_{L^1(\Omega)}+|v_2|_{L^1(\Omega)}\Big(\tau-3c|u-\bar u|_{L^1(\Omega)}^{1/2}\Big)\\
	&\ge\gamma_0|u-\bar u|_{L^1(\Omega)}^{k+1}+|v_2|_{L^1(\Omega)}\Big(\tau-\gamma_0(k+1)|u-\bar u|_{L^1(\Omega)}^k-3c|u-\bar u|_{L^1(\Omega)}^{1/2}\Big).
	\end{align*}
	Choosing $\alpha$ small enough, one can ensure 
	\begin{align*}
	\int_{\Omega}\sigma_{\bar u}(u-\bar u)\,dx+\Lambda(u-\bar u)&\ge \gamma_0|u-\bar u|_{L^1(\Omega)}^{k+1}+|v_2|_{L^1(\Omega)}\Big(\tau-\gamma_0(k+1)|u-\bar u|_{L^1(\Omega)}^k-3c|u-\bar u|_{L^1(\Omega)}^{1/2}\Big)\\
	&\ge \gamma_0|u-\bar u|_{L^1(\Omega)}^{k+1}+\frac{\tau}{2}|v_2|_{L^1(\Omega)}\ge \gamma_0|u-\bar u|_{L^1(\Omega)}^{k+1}
	\end{align*}
	for all $u\in\mathcal U$ with $|u-\bar u|_{L^1(\Omega)}\le\alpha$.
	
\end{proof}

Proposition \ref{PA2} allows to split Assumption \ref{A2} in two parts, as it follows in the next theorem.

\begin{thrm} \label{TSsR_2}
	Let there exist numbers $\mu_1,\mu_2\in\mathbb R$  and $\alpha>0$ such that 
	\begin{align} \label{EH01}
	\int_{\Omega}\sigma_{\bar u} v \,dx \ge \mu_1 |v|_{L^1(\Omega)}^{{k^*}+1}
	\end{align}
	and 
	\begin{align} \label{EH02}
	\Lambda(v)\ge \mu_2|v|_{L^1(\Omega)}^{{k^*}+1}
	\end{align}
	for every $v  \in (\mathcal U - \bar u) \cap C_{\bar u}^\tau \cap {\mathbb B}_{L^1(\Omega)}(\bar u; \al)$. 
	If $\mu_1 + \mu_2 > 0$, then Assumption 2 is fulfilled, hence the optimality mapping $\Phi$
	(see (\ref{optmapping})) of problem (\ref{cost})--(\ref{system}) is strongly H\"older subregular with exponent 
	$\lambda = 1/k^*$ at the reference point $(\bar y, \bar p, \bar u)$.
\end{thrm}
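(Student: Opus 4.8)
The plan is to recognize that hypotheses (\ref{EH01}) and (\ref{EH02}) constitute a splitting of the left-hand side of the inequality in Assumption 2${}'$ into its linear part $\int_\Omega \sigma_{\bar u} v\,dx$ and its quadratic part $\Lambda(v)$. Summing the two given lower bounds should recover exactly the growth condition of Assumption 2${}'$, after which Proposition \ref{PA2} and Theorem \ref{Ssr} finish the argument with no further analysis.

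Concretely, first I would take any $u \in \mathcal U$ with $v := u - \bar u \in C_{\bar u}^\tau \cap \mathbb{B}_{L^1(\Omega)}(\bar u; \alpha)$; such a $v$ automatically belongs to $(\mathcal U - \bar u) \cap C_{\bar u}^\tau \cap \mathbb{B}_{L^1(\Omega)}(\bar u; \alpha)$, the set on which both (\ref{EH01}) and (\ref{EH02}) are assumed valid. Adding them yields
\begin{align*}
\int_{\Omega} \sigma_{\bar u} v\,dx + \Lambda(v) \ge (\mu_1 + \mu_2)|v|_{L^1(\Omega)}^{k^*+1}.
\end{align*}
Setting $\gamma_0 := \mu_1 + \mu_2$ and $\alpha_0 := \alpha$, the hypothesis $\mu_1 + \mu_2 > 0$ makes $\gamma_0$ a positive constant, and the displayed estimate is precisely the growth condition of Assumption 2${}'$ on the same cone and ball. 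Thus Assumption 2${}'$ holds; by Proposition \ref{PA2} it is equivalent to Assumption \ref{A2}, so the latter is fulfilled, and Theorem \ref{Ssr} then delivers the strong H\"older subregularity of $\Phi$ (see (\ref{optmapping})) at $(\bar y, \bar p, \bar u)$ with exponent $\lambda = 1/k^*$.

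The main obstacle is, frankly, negligible: there is no genuine analytic difficulty, since all the real work --- approximating $\sigma_u$ by its linearization and converting the cone-restricted estimate into the unrestricted one --- has already been carried out in Proposition \ref{PA2} and Theorem \ref{Ssr}. The only point requiring care is conceptual rather than technical: neither $\mu_1$ nor $\mu_2$ need be nonnegative on its own, so one cannot treat the two estimates as separately providing coercivity; it is solely the positivity of the sum $\mu_1 + \mu_2$ that is used. This is exactly what makes the theorem useful, as it permits a deficit in the linear switching term to be offset by a surplus in the quadratic form $\Lambda$ (or vice versa).
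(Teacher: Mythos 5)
Your proposal is correct and follows exactly the paper's own argument: the paper's proof is literally the one-line observation that summing (\ref{EH01}) and (\ref{EH02}) gives Assumption 2${}'$ with $\gamma_0 = \mu_1+\mu_2 > 0$, after which Proposition \ref{PA2} and Theorem \ref{Ssr} conclude. Your additional remark that only the positivity of the sum $\mu_1+\mu_2$ matters (neither constant need be nonnegative individually) is a accurate reading of why the splitting is useful.
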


The proof consists of summation of (\ref{EH01}) and (\ref{EH02}) and utilization of Proposition \ref{PA2} and 
Theorem \ref{Ssr}.

\bino
The splitting of Assumption 2 has the advantage that the inequalities in  (\ref{EH01}) and (\ref{EH02}) can be 
analyzed separately. The next proposition is related to (\ref{EH01}).

The following  assumption has become standard in the literature on PDE optimal control problems with 
bang-bang controls, see, e.g., \cite{Casasbang,Hinzestruct,Wachelliptic,Wachstruct}.  

\begin{ssmptn}\label{A3}
	There exists a  positive number $\mu_0$ such that 
	\begin{align*}
	\text{meas}\left\lbrace x\in\Omega: |\sigma_{\bar u}(x)|\le\varepsilon\right\rbrace
	\le \mu_0\varepsilon^{\frac{1}{k^*}}\quad\forall \varepsilon>0.
	\end{align*}	
\end{ssmptn}

\begin{prpstn} \label{PA3_1}
	The following statements hold.
	\begin{itemize}
		\item[(i)]  If Assumption \ref{A3} is fulfilled then there exists $\mu_1 > 0$
		such that (\ref{EH01}) holds for every $v \in \mathcal U -\bar u$. 
		\item[(ii)]  Suppose there exists $\nu > 0$ such that $b_2(x) - b_1(x) \geq \nu$ for a.e. $x \in \Omega$.
		If (\ref{EH01}) holds for every $v \in \mathcal U -\bar u$ then Assumption \ref{A3} is fulfilled.
	\end{itemize}
\end{prpstn}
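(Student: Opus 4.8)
The plan is to build both implications on a single elementary observation. Since $\bar u$ satisfies the variational inequality (\ref{varin}), it obeys the bang-bang sign rule recalled at the beginning of this section: $\bar u = b_1$ where $\sigma_{\bar u}>0$ and $\bar u = b_2$ where $\sigma_{\bar u}<0$. Hence for any $u\in\mathcal U$ and $v:=u-\bar u$ one has $\sigma_{\bar u}(x)v(x)\ge 0$ a.e., and in fact
\begin{align*}
\int_\Omega \sigma_{\bar u}v\,dx=\int_\Omega |\sigma_{\bar u}|\,|v|\,dx.
\end{align*}
Thus (\ref{EH01}) is equivalent to the statement $\int_\Omega|\sigma_{\bar u}|\,|v|\,dx\ge\mu_1|v|_{L^1(\Omega)}^{k^*+1}$ for all $v\in\mathcal U-\bar u$, and I will work with this symmetric form throughout.

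For part $(i)$, fix $v\in\mathcal U-\bar u$ with $v\neq 0$, and let $C$ be a bound for $|v|_{L^\infty(\Omega)}$ coming from the boundedness of $\mathcal U$. For any level $\varepsilon>0$ I would split the integral according to whether $|\sigma_{\bar u}|$ exceeds $\varepsilon$, estimate
\begin{align*}
\int_\Omega|\sigma_{\bar u}|\,|v|\,dx\ge\varepsilon\int_{\{|\sigma_{\bar u}|>\varepsilon\}}|v|\,dx=\varepsilon\Big(|v|_{L^1(\Omega)}-\int_{\{|\sigma_{\bar u}|\le\varepsilon\}}|v|\,dx\Big),
\end{align*}
and then control the last integral by Assumption \ref{A3}: $\int_{\{|\sigma_{\bar u}|\le\varepsilon\}}|v|\,dx\le C\,\text{meas}\{|\sigma_{\bar u}|\le\varepsilon\}\le C\mu_0\varepsilon^{1/k^*}$. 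This yields $\int_\Omega|\sigma_{\bar u}|\,|v|\,dx\ge\varepsilon\big(|v|_{L^1(\Omega)}-C\mu_0\varepsilon^{1/k^*}\big)$. The key step is the calibration of $\varepsilon$ to the size of $v$: choosing $\varepsilon=\lambda|v|_{L^1(\Omega)}^{k^*}$ with $\lambda>0$ fixed so small that $C\mu_0\lambda^{1/k^*}\le 1/2$ turns the bracket into $\tfrac12|v|_{L^1(\Omega)}$ and produces $\int_\Omega|\sigma_{\bar u}|\,|v|\,dx\ge\tfrac{\lambda}{2}|v|_{L^1(\Omega)}^{k^*+1}$, so $\mu_1:=\lambda/2$ works.

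For part $(ii)$, fix $\varepsilon>0$ and set $E_\varepsilon:=\{x:|\sigma_{\bar u}(x)|\le\varepsilon\}$; if $\text{meas}(E_\varepsilon)=0$ there is nothing to prove, so assume it is positive. Using the separation hypothesis $b_2-b_1\ge\nu$, I would construct a test control by setting $u=\bar u$ off $E_\varepsilon$ and, on $E_\varepsilon$, moving $\bar u$ to whichever of the two bounds is farther; since $\max(\bar u-b_1,b_2-\bar u)\ge(b_2-b_1)/2\ge\nu/2$ pointwise, this gives a measurable $u\in\mathcal U$ with $v:=u-\bar u$ supported in $E_\varepsilon$ and $|v|\ge\nu/2$ there. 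Then $|v|_{L^1(\Omega)}\ge\tfrac{\nu}{2}\,\text{meas}(E_\varepsilon)$, while the support condition gives $\int_\Omega|\sigma_{\bar u}|\,|v|\,dx\le\varepsilon|v|_{L^1(\Omega)}$. Feeding this into (\ref{EH01}) yields $\mu_1|v|_{L^1(\Omega)}^{k^*+1}\le\varepsilon|v|_{L^1(\Omega)}$, hence $|v|_{L^1(\Omega)}\le(\varepsilon/\mu_1)^{1/k^*}$, and combining with the lower bound for $|v|_{L^1(\Omega)}$ gives $\text{meas}(E_\varepsilon)\le\frac{2}{\nu}\mu_1^{-1/k^*}\varepsilon^{1/k^*}$, which is Assumption \ref{A3} with $\mu_0:=\frac{2}{\nu}\mu_1^{-1/k^*}$.

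The genuinely delicate points are, for $(i)$, the scaling $\varepsilon\sim|v|_{L^1(\Omega)}^{k^*}$, which is exactly what matches the exponents on both sides; and for $(ii)$, the construction of a valid measurable test control supported on the small-switching set. The separation hypothesis $b_2-b_1\ge\nu$ is indispensable here: without a uniform gap between the bounds, $\bar u$ could sit arbitrarily close to both endpoints on a set of positive measure, leaving no room to build a test function with $|v|$ bounded below, and the converse implication would fail.
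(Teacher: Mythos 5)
Your proposal is correct. Part (ii) is essentially the paper's own proof: the same test control (on the small-switching set, move $\bar u$ to the farther of the two bounds, which is at distance at least $(b_2-b_1)/2\ge\nu/2$), the same chain of inequalities $\mu_1|v|_{L^1(\Omega)}^{k^*+1}\le\varepsilon|v|_{L^1(\Omega)}$, and the same constant $\mu_0=2\nu^{-1}\mu_1^{-1/k^*}$. For part (i) the paper gives no argument at all, only citations to the literature; your level-set splitting with the calibration $\varepsilon=\lambda|v|_{L^1(\Omega)}^{k^*}$ is precisely the standard argument from those references and is carried out correctly, so your write-up is in that respect more self-contained than the paper. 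The preliminary identity $\int_\Omega\sigma_{\bar u}v\,dx=\int_\Omega|\sigma_{\bar u}|\,|v|\,dx$, valid for all $v\in\mathcal U-\bar u$ by the sign rule, is a clean way to organize both halves (the paper uses the sign structure only implicitly), and your closing remark on why the gap condition $b_2-b_1\ge\nu$ is indispensable for the converse matches the role it plays in the paper's hypothesis.
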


\begin{proof}
	The proof of the first claim follows \cite[Proposition 3.1]{Wachelliptic}, see also \cite[Proposition 2.7]{Casasbang}. 
	It has been also proved several times in the literature on ordinary differential equations 
	in a somewhat stronger form; see, e.g., \cite{Sey2,SubregOsm,Prei,Sey1}.
	
	Let us prove the second claim.
	For each $\varepsilon>0$, define
	\begin{align*}
	u_\varepsilon(x):=\left\{ \begin{array}{lcc}
	\bar u(x) &   \text{if}  & |\sigma_{\bar u}(x)|>\varepsilon \\
	\\ b_1(x)&  \text{if} & |\sigma_{\bar u}(x)|\le\varepsilon\quad\text{and}\quad
	\bar u(x)\in\Big[\displaystyle\frac{b_1(x)+b_2(x)}{2},b_2(x)\Big]\\
	\\ b_2(x)&  \text{if} & |\sigma_{\bar u}(x)|\le\varepsilon\quad\text{and}\quad 
	\bar u(x)\in\Big[b_1(x),\displaystyle\frac{b_1(x)+b_2(x)}{2}\Big).
	\end{array} \right.
	\end{align*}
	Clearly each $u_\varepsilon$ belongs to $\mathcal U$, and 
	\begin{align}\label{onehalfeine}
	|u_\varepsilon(x)-\bar u(x)|\ge\frac{1}{2}|b_2(x)-b_1(x)|
	\end{align}
	for a.e $x\in\left\lbrace s\in\Omega: |\sigma_{\bar u}(s)|\le\epsilon\right\rbrace $. From  (\ref{EH01}) we have
	\begin{align*}
	\mu_1\Big(\int_{|\sigma_{\bar u}|\le\varepsilon} |u_{\varepsilon}-\bar u|\,dx\Big)^{k+1}
	\le\int_{|\sigma_{\bar u}|\le\varepsilon} \sigma_{\bar u}(u_{\varepsilon}-\bar u)\,dx
	\le\varepsilon\int_{|\sigma_{\bar u}|\le\varepsilon} |u_{\varepsilon}-\bar u|\,dx.
	\end{align*}
	This implies
	\begin{align}\label{wsfas}
	\int_{|\sigma_{\bar u}|\le\varepsilon} |u_{\varepsilon}-\bar u|\,dx\le\mu_1^{-\frac{1}{k}}\varepsilon^\frac{1}{k}.
	\end{align}
	Using (\ref{onehalfeine}) and (\ref{wsfas}) we obtain that
	\begin{align*}
	\text{meas} \left\lbrace x \in \Omega: |\sigma_{\bar u}(x)| \,\le \, \varepsilon \right\rbrace&
	=\frac{1}{\nu} \int_{|\sigma_{\bar u}|\le\varepsilon} \nu \,dx
	\,\le\,\frac{1}{\nu} \int_{|\sigma_{\bar u}|\le\varepsilon} |b_2-b_1|\,dx
	\, \le \, \frac{2}{\nu} \int_{|\sigma_{\bar u}| \le \varepsilon} |u_{\varepsilon}-\bar u| \,dx\\
	&\le 2(\mu_1)^{-\frac{1}{k}} \nu^{-1}  \, \varepsilon^\frac{1}{k}.
	\end{align*}
	Thus Assumption \ref{A3} is fulfilled with $\mu_0 :=2 (\mu_1)^{-\frac{1}{k}} \nu^{-1}$.
\end{proof}

\bibliography{references}{}
\bibliographystyle{abbrv}
\end{document}